\numberwithin{equation}{section}
\newcommand{\eq}{\begin{equation}}
\newcommand{\en}{\end{equation}}
\DeclareSymbolFont{bbold}{U}{bbold}{m}{n}
\DeclareSymbolFontAlphabet{\mathbbold}{bbold}
\newcommand{\ind}{\mathbbold{1}}
\newcommand{\Rmnum}[1]{\expandafter\@slowromancap\romannumeral #1@}
\newtheorem{thm}{Theorem}[section]
\newtheorem{prop}[thm]{Proposition}
\newtheorem{lem}[thm]{Lemma}
\newtheorem{cor}[thm]{Corollary}
\theoremstyle{definition}
\newtheorem{remark}[thm]{Remark}
\newtheorem{defn}[thm]{Definition}
\newtheorem{notation}[thm]{Notation}
\numberwithin{equation}{section}
\numberwithin{figure}{section}
\newcommand{\reals}{\mathbb{R}}
\newcommand{\ints}{\mathbb{Z}}
\newcommand{\nats}{\mathbb{N}}
\newcommand{\lra}{\leftrightarrow}
\newcommand{\ra}{\rightarrow}
\renewcommand{\P}{\mathbb{P}}
\newcommand{\Q}{\mathbb{Q}}
\newcommand{\E}{\mathbb{E}}
\newcommand{\ed}{\,{\buildrel d \over =}\,}
\renewcommand{\and}{ \quad \text{and} \quad }
\begin{document}

\title[Lipschitz minorants]{Excursions away from the Lipschitz minorant of a L\'evy process}

%    Remove any unused author tags.

%    author one information
\author{Steven N. Evans}
\address{Department of Statistics \#3860 \\ 367 Evans Hall \\ University of California at Berkeley \\ Berkeley, CA 94720-3860 \\ USA}
\email{evans@stat.berkeley.edu}
\thanks{SNE supported in part by NSF grant DMS-1512933
and NIH grant 1R01GM109454-01.}

%    author two information
\author{Mehdi Ouaki}
\address{Department of Statistics \#3860 \\ 367 Evans Hall \\ University of California at Berkeley \\ Berkeley, CA 94720-3860 \\ USA}
\email{mouaki@berkeley.edu}
\thanks{}

\subjclass[2010]{60G51, 60G55, 60J65}

\keywords{fluctuation theory, regenerative set, subordinator, 
last exit decomposition, global minimum, path decomposition, enlargement of filtration}

\date{\today}

\dedicatory{}

\begin{abstract} 
For $\alpha >0$, the $\alpha$-Lipschitz minorant of a function $f : \mathbb{R} \rightarrow \mathbb{R}$ is the greatest function $m : \mathbb{R} \rightarrow \mathbb{R}$ such that $m \leq f$ and $\vert m(s) - m(t) \vert \leq \alpha \vert s-t \vert$ for all $s,t \in \mathbb{R}$, should such a function exist. If $X=(X_t)_{t \in \mathbb{R}}$ is a real-valued L\'evy process that is not a pure linear drift with slope $\pm \alpha$, then the sample paths of $X$ have an $\alpha$-Lipschitz minorant almost surely if and only if $\mathbb{E}[\vert X_1 \vert]< \infty$ and $\vert \mathbb{E}[X_1]\vert < \alpha$. Denoting the minorant by $M$, we consider the contact set $\mathcal{Z}:=\{ t \in \mathbb{R} : M_t = X_t \wedge X_{t-}\}$, which, since it is regenerative and stationary, has the distribution of the closed range of some subordinator ``made stationary'' in a suitable sense.  We provide a description of the excursions of the L\'evy process away from its contact set similar to the one presented in It\^o excursion theory. We study the distribution of the excursion on the special interval straddling zero.  We also give an explicit path decomposition of the other ``generic'' excursions in the case of  Brownian motion with drift $\beta$ with $\vert \beta \vert < \alpha$.  Finally, we investigate the progressive enlargement of the Brownian filtration by the random time that is the first point of the contact set after zero.
\end{abstract}

\maketitle

\section{Introduction}

Recall that a function $g: \reals \to \reals$ is $\alpha$-Lipschitz for some $\alpha > 0$ if $|g(s) - g(t)| \le \alpha |s - t|$ for all $s,t \in \reals$.  
Given a function $f : \reals \to \reals$, we say that $f$ {\em dominates} the $\alpha$-Lipschitz function $g$ if $g(t) \le f(t)$ for all $t \in \reals$.  
A necessary and sufficient condition that $f$ dominates some $\alpha$-Lipschitz function is that $f$ is bounded below on compact intervals and satisfies
$\liminf_{t \to -\infty} f(t) - \alpha t > - \infty$ and 
$\liminf_{t \to +\infty} f(t) + \alpha t > - \infty$.  
When the function $f$ dominates some $\alpha$-Lipschitz function there is an $\alpha$-Lipschitz function $m$ dominated by $f$ such that $g(t) \le m(t)$ for all $t \in \reals$ for any $\alpha$-Lipschitz function $g$ dominated by $f$; we call $m$ the {\em $\alpha$-Lipschitz minorant} of $f$.  The $\alpha$-Lipschitz minorant is given concretely by
\begin{equation}
\label{mformula}
\begin{split}
m(t) & = \sup \{ h \in \reals : h - \alpha|t-s| \leq f(s)  \text{ for all } s \in \reals \} \\
& = \inf \{f(s) + \alpha |t-s| : s \in \reals\}. \\
\end{split}
\end{equation}
The purpose of the present paper is to continue the study of the $\alpha$-Lipschitz minorants of the sample paths of a two-sided L\'evy process begun in \cite{zbMATH06288068}.

A two-sided L\'evy process is a real-valued stochastic process indexed by the real numbers that has c\`adl\`ag paths, stationary independent increments, and takes the value $0$ at time $0$.  
The distribution of a two-sided L\'evy process $X$ is characterized by the L\'evy-Khintchine formula
$\E[e^{i \theta (X_t - X_s)}] = e^{-(t-s) \Psi(\theta)}$ for $\theta \in \reals$ and $-\infty < s \le t < \infty$,
where
\[
\Psi(\theta) = - i a \theta + \frac{1}{2} \sigma^2 \theta^2 +
\int_\reals(1 - e^{i \theta x} + i \theta x \ind_{ \{ |x| \le 1 \} } )
\, \Pi(dx) 
\]
with $a \in \reals$, $\sigma \in \reals_+$, and $\Pi$ a $\sigma$-finite
measure concentrated on $\reals \setminus \{0\}$ 
satisfying $\int_\reals (1 \wedge x^2) \, \Pi(dx) <
\infty$ (see \cite{bertoin, sato} for information about (one-sided) L\'evy processes --- the two-sided case involves only trivial modifications).  
In order to avoid having to consider annoying, but trivial, special cases in what follows, we henceforth assume that $X$ is not just deterministic linear drift $X_t = a t$, $t \in \reals$, for some $a \in \reals$; that is, we assume that there is a non-trivial Brownian component ($\sigma > 0$) or a non-trivial jump component ($\Pi \ne 0$).

The sample paths of $X$ have bounded variation almost surely
if and only if $\sigma = 0$ and
$\int_\reals (1 \wedge |x| ) \, \Pi(dx) < \infty$. In this case $\Psi$
can be rewritten as
\[
\Psi(\theta) = - i d \theta + \int_\reals (1-e^{i \theta x} ) \, \Pi(dx).
\]
We call $d \in \reals$ the drift coefficient.

We now recall a few facts about the $\alpha$-Lipschitz minorants of the sample paths of $X$ from \cite{zbMATH06288068}.

Either the $\alpha$-Lipschitz minorant exists for almost all sample paths of $X$ or it fails to exist for almost sample paths of $X$.
A necessary and sufficient condition for the $\alpha$-Lipschitz minorant to exist for almost all sample paths is that 
$\mathbb{E}[\vert X_1 \vert]< \infty$ and $\vert \mathbb{E}[X_1]\vert < \alpha$.  
We assume from now on that this condition holds and denote the corresponding minorant process by $(M_t)_{t \in \reals}$.
Figure~\ref{F:brownian-picture} shows an example of a typical Brownian motion sample path and its associated $\alpha$-Lipschitz minorant.

\begin{figure}
  \centering
    \reflectbox{%
      \includegraphics[width=1.0\textwidth]{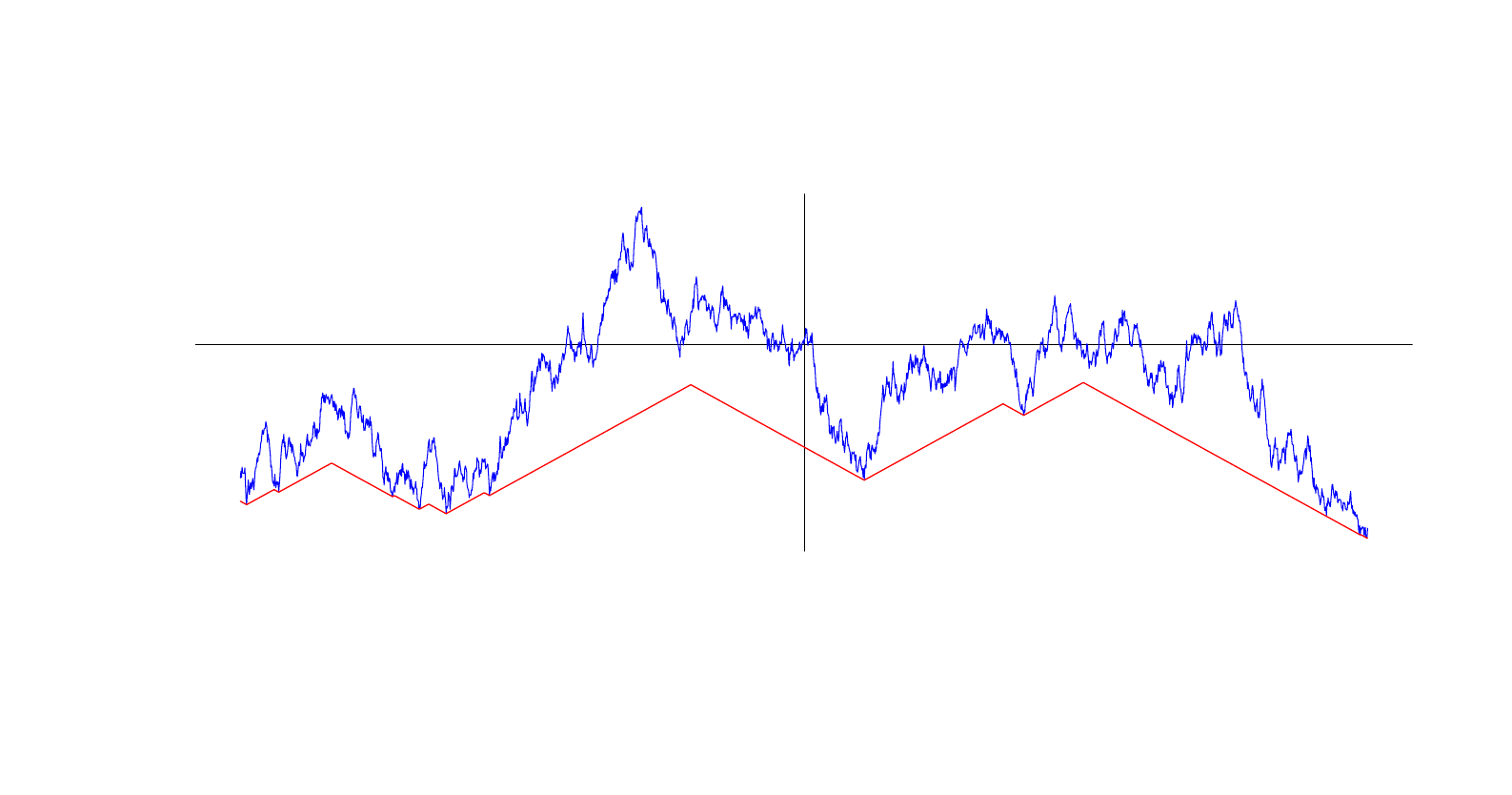}}
  \caption{A typical Brownian motion sample path and its associated
$\alpha$-Lipschitz minorant.}
\label{F:brownian-picture}
\end{figure}

Set $\mathcal{Z} := \{t \in \reals : M_t = X_t \wedge X_{t-}\}$.  We call $\mathcal{Z}$ the {\em contact set}.
The random closed set $\mathcal{Z}$ is non-empty, stationary, and regenerative in the sense of \cite{fitztaksar}
(see Definition~\ref{D:regenset} below for a re-statement of the definition).
Such a random closed set either has infinite Lebesgue measure almost surely or zero Lebesgue measure almost surely.
\begin{itemize}
\item
If the sample paths of $X$ have unbounded variation almost surely, then $\mathcal{Z}$ has zero Lebesgue measure almost surely.
%If the sample paths of $X$ have unbounded variation almost surely, then zero is regular for the interval $(-\infty,0]$ for both $(X_t+\alpha t)_{t \geq 0}$ and $(-X_t+\alpha t)_{t \geq 0}$ since then we have $\liminf_{t \downarrow 0} t^{-1}X_t = -\infty$.
%If $\sigma=0$, $\Pi(\mathbb{R}) < \infty$, and $\vert d \vert = \alpha$, then the Lebesgue measure of $\mathcal{Z}$ is almost surely infinite. 
%If $X$ is not of this form, then the Lebesgue measure of $\mathcal{Z}$ is almost surely zero is only and if zero is regular for the interval $(-\infty, 0]$ for at least one of the L\'evy processes $(X_t+\alpha t)_{t \geq 0}$ and $(-X_t + \alpha t)_{t \geq 0}$.
\item
If $X$ has sample paths of bounded variation and $| d | > \alpha$, then $\mathcal{Z}$ has zero Lebesgue measure almost surely.
%If $X$ has sample paths of  bounded variation and drift $d$, then the zero is regular for $(-\infty,0]$ for exactly one of those two processes when $\vert d \vert > \alpha$ and for neither if $\vert d \vert < \alpha$.
\item
If $X$ has sample paths of bounded variation and $| d | < \alpha$, then $\mathcal{Z}$ has infinite Lebesgue measure almost surely.
\item
If $X$ has sample paths of bounded variation and  $\vert d \vert = \alpha$, then whether the Lebesgue measure of $\mathcal{Z}$ is infinite or zero is determined  by an integral condition involving the L\'evy measure $\Pi$ that we omit.  
In particular, if $\sigma=0$, $\Pi(\mathbb{R}) < \infty$, and $\vert d \vert = \alpha$, then the Lebesgue measure of $\mathcal{Z}$ is almost surely infinite.
\end{itemize}

%\mathbf{Hypothesis.} From now on we assume that the L\'evy process $X=(X_t)_{t \in \mathbb{R}}$ has the following propreties :
%\begin{itemize}
%\item{ $X_0=0$;}
%\item{$X_t$ is non-degenerate for $t \neq 0$;}
%\item{ $\mathbb{E}[\vert X_1 \vert]<\infty$;}
%\item{ $\vert \mathbb{E}[X_1] \vert < \alpha$.}
%\end{itemize}
%From the Theorem 2.6 of [AE12], the random closed set $\mathcal{Z}$ is stationary and regenerative. Consider $Y=(Y_t)_{t \geq 0}$ to be "the" subordinator associated with this set (it is unique up to a time change of the form $t \rightarrow ct$ where $c>0$), and let $\delta$ and $\Lambda$ respectively be its drift and L\'evy measure. The closed range of $Y$ has the same distribution as $\mathbb{Q}_0$ the regenerative law of $\mathcal{Z}$. From [Don07, Chapter 2, Theorem 3], this set either has zero Lebesgue measure almost surely or infinite Lebesgue measure almost surely according to whether $\delta$ is zero or positive. Hence, the same dichotomy holds for $\mathcal{Z}$ and the following theorem gives us necessary and sufficient conditions on the parameters of $X$ for each case.

If $\mathcal{Z}$ has zero Lebesgue measure, then $\mathcal{Z}$ is either almost surely a discrete set or almost surely a perfect set with empty interior.
\begin{itemize}
\item
If $\sigma > 0$, then $\mathcal{Z}$ is almost surely discrete.
\item
If $\sigma = 0$ and $\Pi(\mathbb{R})=\infty$, then $\mathcal{Z}$ is almost surely discrete if and only if
\[
 \int_{0}^{1} t^{-1}\mathbb{P}\{X_t \in [-\alpha t,\alpha t]\} \,  dt < \infty.
\]
\item
If $\sigma = 0$ and $\Pi(\mathbb{R}) < \infty$, then $\mathcal{Z}$ is almost surely discrete if and only if $|d| > \alpha$.
\end{itemize}

The outline of the remainder of the paper is as follows.  

In Section~\ref{S:space-time_regenerative} we show that the pair $((X_t)_{t \in \reals}, \mathcal{Z})$ is a {\em space-time regenerative system} in the sense that if $D_t := \inf\{s \ge t: s \in \mathcal{Z}\}$ for any $t \in \reals$, then $((X_{D_t+u} - X_{D_t})_{u \ge 0}, \mathcal{Z} \cap [D_t,\infty) - D_t)$ is independent of $((X_u)_{u \le D_t}, \mathcal{Z} \cap (-\infty, D_t])$ with a distribution that does not depend on $t \in \reals$.  
It follows that if $\mathcal{Z}$ is discrete, we write 
$0 < T_1 < T_2 < \ldots$ for the successive positive elements of $\mathcal{Z}$, and we set
$Y^n = (X_{T_n + t} - X_{T_n}, \, 0 \le t \le T_{n+1} - T_n)$, $n \in \nats$, for the corresponding sequence of excursions away
from the contact set, then these excursions are independent and identically distributed.  When $\mathcal{Z}$ is not discrete there is a ``local time'' on $\mathcal{Z} \cap [0,\infty)$ and we give a description of the corresponding excursions away from the contact set as the points of a Poisson point process that is analogous to It\^o's description of the excursions of a Markov process away from a regular point.

Because $((X_t)_{t \in \reals}, \mathcal{Z})$ is stationary, the key to establishing the space-time regenerative property is to show that if $D := D_0$ is the first positive point in $\mathcal{Z}$, then $((X_{D+t} - X_D)_{t \ge 0}, \mathcal{Z} \cap [D,\infty) - D)$ is independent of $((X_t)_{t \le D}, \mathcal{Z} \cap (-\infty, D])$.  This is nontrivial because $D$ is most definitely not a stopping time for the canonical filtration of $X$ and so we can't just apply the strong Markov property.  We derive the claimed fact in Section~\ref{S:after_first_contact} using a result from \cite{millarpostmin} on the path decomposition of a real-valued Markov process at the time it achieves its global minimum.  This result in turn is based on general last-exit decompositions from \cite{MR0297019, MR0334335}.

When the contact set is discrete we obtain some information about the excursion away from the $\alpha$-Lipschitz minorant that contains the time zero in Section~\ref{S:straddle} using ideas from \cite{zbMATH01416816}.   If $G$ is the last contact time before zero and $D$, as above, is the first contact time after zero, we show that $\frac{D}{D - G}$ is independent of $(X_t - X_G, \, G \le t < D)$ and uniformly distributed on $[0,1]$.   This observation allows us to describe the finite-dimensional distributions of $(X_t, \, G \le t < D)$ in terms of those of $(X_t, \, 0 \le t < D)$, and we are able to determine the latter explicitly.   The argument here is based on a generalization of the fact that if $V$ is a nonnegative random variable, $U$ is uniformly distributed on $[0,1]$, and $U$ and $V$ are independent, then it is possible to express the distribution of $V$ in terms of that of $U V$.

As before, write $Y_n$, $n \in \nats$, for the independent, identically distributed sequence of excursions away from the contact set that occur at positive times in the case where the contact set is discrete.   When $X$ is Brownian motion with drift $\beta$, where $|\beta| < \alpha$ in order for the $\alpha$-Lipschitz minorant to exist, we establish a path decomposition description for the common distribution of the $Y_n$ in Section~\ref{S:Brownian_generic}.  Using this path decomposition we can determine the distributions of quantities such as the length $T_{n+1} - T_n$ and the distribution of the final value
$X_{T_{n+1}} - X_{T_n}$.  Moreover, if we write $Y_0$ for the excursion straddling time zero, then we have the ``size-biasing'' relationship
$\mathbb{E}[f(Y_0)] = \mathbb{E}[f(Y_n) (T_{n+1} - T_n)] / \mathbb{E}[T_{n+1} - T_n]$, $n \in \nats$, for nonnegative measurable functions $f$, and this allows us to recover information about the distribution of $Y_0$ from a knowledge of the common distribution of the ``generic'' excursions $Y_n$, $n \in \nats$.

As we noted above, the random time $D$ is not a stopping time for the canonical filtration of $X$.  In Section~\ref{S:enlargement} we investigate the filtration obtained by enlarging the Brownian filtration in such way that $D$ becomes a stopping time.  Martingales for the Brownian filtration become semimartingales in the enlarged filtration and we are able to describe their canonical semimartingale decompositions quite explicitly.

The paper finishes with two auxiliary sections.  Section~\ref{S:minorants} contains some (deterministic) results about the $\alpha$-Lipschitz minorant construction that are used throughout the paper.  Section~\ref{S:stopping_time}
details two general lemmas about random times for L\'evy processes that are used in Section~\ref{S:after_first_contact} and  Section~\ref{S:enlargement}.

\section{Space-time regenerative systems}
\label{S:space-time_regenerative}

Let $\Omega^\lra$ (resp. $\Omega^\ra$) denote the space of c\`adl\`ag 
$\reals$-valued paths indexed by $\reals$ (resp. $\reals_+$).
For $t \in \reals$, define $\tau_t : \Omega^\lra \to \Omega^\ra$ by
\[
(\tau_t(\omega^\lra))_s := \omega^\lra_{t+s} - \omega^\lra_t, \quad s \ge 0.
\]
For $t \in \reals$ define $x_t : \Omega^\lra \to \reals$ by
\[
x_t(\omega^\lra) := \omega^\lra_t.
\]
For $t \in \reals$, define $k_t : \Omega^\lra \to \Omega^\lra$ by

\[
(k_t(\omega^\lra))_s:=
\begin{cases}
 \omega^\lra_s,&  \text{if } s \le t, \\
  \omega^\lra_t,& \text{if } s>t.
\end{cases}
\]
Let $\tilde \Omega^\lra$ (resp. $\tilde \Omega^\ra$)
denote the class of closed subsets of $\reals$ (resp. $\reals_+$). For $t
\in \reals$ define $\tilde \tau_t: \tilde \Omega^\lra \to \tilde \Omega^\ra$
by 
\[
\tilde \tau_t(\tilde \omega^\lra) := \{s-t: s \in \tilde \omega^\lra \cap [t, \infty)\}.
\]
For $t \in \reals$ define $d_t: \tilde \Omega^\lra \to \reals \cup \{+\infty\}$ by
\[
d_t(\tilde \omega^\lra) := \inf \{ s>t: s \in \tilde \omega^\lra \}
\]
and 
$r_t: \tilde \Omega^\lra \to \reals_+ \cup \{+\infty\}$ by
\[
r_t(\tilde \omega^\lra) := d_t(\tilde \omega^\lra) - t.
\]
With a slight abuse of notation, also use $d_t$ and $r_t$, $t \in \reals_+$, to denote the analogously
defined maps from $\tilde \Omega^\ra$ to $\reals_+ \cup \{+\infty\}$.

Put $\bar \Omega^\lra := \Omega^\lra \times \tilde \Omega^\lra$ and
$\bar \Omega^\ra := \Omega^\ra \times \tilde \Omega^\ra$.
Define $\bar \tau_t : \bar \Omega^\lra \to \bar \Omega^\ra$ by
\[
\bar \tau_t(\omega^\lra, \tilde \omega^\lra) 
:= (\tau_t(\omega^\lra), \tilde \tau_t(\tilde \omega^\lra)).
\]
Define $\bar d_t : \bar \Omega^\lra \to \reals \cup \{+\infty\}$ by
\[
\bar d_t(\omega^\lra, \tilde \omega^\lra) := d_t(\tilde \omega^\lra).
\]
Finally, for $t \in \reals$ define the following $\sigma$-fields on $\bar \Omega^\lra$:
\[
\bar{\mathcal{G}_t}^\lra:=\sigma \{ \bar d_s , k_{\bar d_s}, s \le t \}
\]
 and 
 \[
 \bar{\mathcal{G}}^\lra:=\sigma \{ \bar d_s , k_{\bar d_s}, s \in \reals \}.
 \]
Define $\bar{\mathcal{G}_t}^\ra$ and $\bar{\mathcal{G}}^\ra$ analogously.

\begin{defn}
\label{D:regenset}
Let $\bar \Q^\lra$ (resp. $\bar \Q^\ra$) be a
probability measure on $(\bar \Omega^\lra,\bar{\mathcal{G}}^\lra)$
(resp. $(\bar \Omega^\ra,\bar{\mathcal{G}}^\ra)$.  Then $\bar \Q^\lra$
is regenerative with regeneration law $\bar \Q^\ra$ if
\begin{itemize}
  \item[(i)] 
	$\bar \Q^\lra \{\bar d_t = +\infty\} = 0$, for all $t \in \reals$;
  \item[(ii)]
  for all $t \in \reals$ and
    for all $\bar{\mathcal{G}}^\ra$-measurable nonnegative functions $F$,
\[
\bar \Q^\lra \left [F(\bar \tau_{\bar d_t}) \, | \, \bar{\mathcal{G}}_{t+} \right] = \bar \Q^\ra[F], 
\]
where we write $\bar \Q^\lra[\cdot]$ and $\bar \Q^\ra[\cdot]$ for expectations
with respect to $\bar \Q^\lra$ and $\bar \Q^\ra$.
\end{itemize}
\end{defn}

\begin{remark}
\label{R:zero_enough}
Suppose that the probability measure $\bar \Q^\lra$ on $(\bar \Omega^\lra, \bar{\mathcal{G}}^\lra)$
is stationary; that is, that under $\bar \Q^\lra$ the process 
$(\omega^\lra, \tilde \omega^\lra) \mapsto (x_t(\omega^\lra), r_t(\tilde \omega^\lra))_{t \in \reals}$
has the same distribution as the process
$(\omega^\lra, \tilde \omega^\lra) \mapsto (x_{s+t}(\omega^\lra)-x_{s}(\omega^\lra), r_{s+t}(\tilde \omega^\lra))_{t \in \reals}$
for all $s \in \reals$.
Then,
in order to check conditions (i) and (ii) of Definition~\ref{D:regenset},
it suffices to check them for the case $t=0$.
\end{remark}

\begin{thm}
\label{T:extension}
\begin{itemize}
\item[(i)]
In order to check that the probability measure $\bar \Q^\lra$ on $(\bar \Omega^\lra, \bar{\mathcal{G}}^\lra)$
is space-time regenerative with the probability measure $\bar \Q^\ra$ on $(\bar \Omega^\ra, \bar{\mathcal{G}}^\ra)$
as regeneration law, it suffices to check
\begin{itemize}
  \item[(a)] 
	$\bar \Q^\lra \{\bar d_t = +\infty\} = 0$, for all $t \in \reals$;
  \item[(b)] 
  for all $t \in \reals$ and
    for all $\bar{\mathcal{G}}^\ra$-measurable nonnegative functions $F$,
\[
\bar \Q^\lra \left [F(\bar \tau_{\bar d_t}) \, | \, \bar{\mathcal{G}}_{t} \right] = \bar \Q^\ra[F].
\]
\end{itemize}
\item[(ii)]
Suppose that the probability measure $\bar \Q^\lra$ on $(\bar \Omega^\lra, \bar{\mathcal{G}}^\lra)$
is space-time regenerative with the probability measure $\bar \Q^\ra$ on $(\bar \Omega^\ra, \bar{\mathcal{G}}^\ra)$
as regeneration law and that $T$ is an almost surely finite 
$(\bar{\mathcal{G}}^\lra_{t+})_{t \in \reals}$-stopping
time.  Then for all $\bar{\mathcal{G}}^\ra$-measurable nonnegative functions $F$
\[
\bar \Q^\lra \left [F(\bar \tau_{\bar d_T}) \, | \, \bar{\mathcal{G}}_{T+} \right] = \bar \Q^\ra[F].
\]
\end{itemize}
\end{thm}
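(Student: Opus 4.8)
The plan is the following. Condition (a) is literally condition (i) of Definition~\ref{D:regenset}, so in part~(i) the only content is to upgrade the conditioning in (b) from $\bar{\mathcal{G}}_t$ to $\bar{\mathcal{G}}_{t+}$, i.e.\ to derive condition (ii) of Definition~\ref{D:regenset}. The deterministic engine behind this is the behaviour of $t \mapsto \bar d_t$. From the inclusion $\{s > t' : s \in \tilde\omega^\lra\} \subseteq \{s > t : s \in \tilde\omega^\lra\}$ for $t' > t$ one reads off that $t \mapsto \bar d_t$ is nondecreasing, that $\bar d_{t'} = \bar d_t$ for every $t' \in (t, \bar d_t)$ on the event $\{\bar d_t > t\}$, and---by a short estimate on the infimum---that $\bar d_{t'} \downarrow \bar d_t$ as $t' \downarrow t$ in all cases. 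Equipping $\bar\Omega^\ra = \Omega^\ra \times \tilde\Omega^\ra$ with the product of the Skorokhod topology and a suitable metrizable topology on closed subsets of $\reals_+$ (the Fell topology), I would then check that $\bar\tau_{\bar d_{t'}} \to \bar\tau_{\bar d_t}$ in this topology as $t' \downarrow t$, off the $\bar\Q^\lra$-null set $\{\bar d_t = +\infty\}$: on $\{\bar d_t > t\}$ this is eventual equality by the previous remark, and on $\{\bar d_t = t\}$---where necessarily $t \in \tilde\omega^\lra$---it follows by verifying the two defining conditions of Fell convergence for the shifted sets $\tilde\tau_{\bar d_{t'}}(\tilde\omega^\lra)$, together with right-continuity of càdlàg paths and of $c \mapsto \tau_c(\omega^\lra)$ into the Skorokhod space.

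Granting this, fix $t$ and a bounded $\bar{\mathcal{G}}_{t+}$-measurable $Z$. Since $\bar{\mathcal{G}}_{t+} \subseteq \bar{\mathcal{G}}_{t+\eps}$, hypothesis (b) applied at time $t+\eps$ gives $\bar\Q^\lra[F(\bar\tau_{\bar d_{t+\eps}}) Z] = \bar\Q^\ra[F]\,\bar\Q^\lra[Z]$ for every $\eps > 0$ and every $\bar{\mathcal{G}}^\ra$-measurable $F \ge 0$. Taking $F$ bounded and continuous and letting $\eps \downarrow 0$, bounded convergence together with the a.s.\ convergence of the previous paragraph yields $\bar\Q^\lra[F(\bar\tau_{\bar d_t}) Z] = \bar\Q^\ra[F]\,\bar\Q^\lra[Z]$. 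A functional monotone class argument then extends this first to all bounded, then to all nonnegative, $\bar{\mathcal{G}}^\ra$-measurable $F$ (the bounded continuous functions are multiplicative and generate the Borel $\sigma$-field of $\bar\Omega^\ra$, which contains $\bar{\mathcal{G}}^\ra$ since the generators $\bar d_s$, $k_{\bar d_s}$ are Borel). As $Z$ was an arbitrary bounded $\bar{\mathcal{G}}_{t+}$-measurable variable and the right-hand side does not involve it, this is exactly condition (ii) of Definition~\ref{D:regenset}, proving part~(i).

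For part~(ii) I would first record that $\bar d_T < +\infty$ $\bar\Q^\lra$-a.s.: applying condition (i) of Definition~\ref{D:regenset} at each rational time forces $\tilde\omega^\lra$ to be unbounded above a.s., hence $\bar d_s < \infty$ simultaneously for all $s \in \reals$, a.s., so $\bar\tau_{\bar d_T}$ is well defined. Next I would prove the assertion for a stopping time $T$ with values in a countable set $\{t_1, t_2, \ldots\}$: on $\{T = t_k\}$ we have $\bar\tau_{\bar d_T} = \bar\tau_{\bar d_{t_k}}$, and for bounded $\bar{\mathcal{G}}_{T+}$-measurable $Z$ the variable $Z\,\ind_{\{T=t_k\}}$ is $\bar{\mathcal{G}}_{t_k+}$-measurable, so condition (ii) of Definition~\ref{D:regenset} at $t_k$ gives $\bar\Q^\lra[F(\bar\tau_{\bar d_T}) Z \ind_{\{T=t_k\}}] = \bar\Q^\ra[F]\,\bar\Q^\lra[Z\ind_{\{T=t_k\}}]$; summing over $k$ (legitimate since $T < \infty$ a.s.) proves the identity for discrete $T$. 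Finally, for an arbitrary a.s.\ finite $(\bar{\mathcal{G}}^\lra_{t+})$-stopping time $T$, put $T_n := 2^{-n}\lceil 2^n T\rceil$; each $T_n$ is a discrete $(\bar{\mathcal{G}}^\lra_{t+})$-stopping time with $T_n \downarrow T$ and $\bar{\mathcal{G}}_{T+} \subseteq \bar{\mathcal{G}}_{T_n+}$, so the discrete case gives $\bar\Q^\lra[F(\bar\tau_{\bar d_{T_n}}) Z] = \bar\Q^\ra[F]\,\bar\Q^\lra[Z]$ for bounded $\bar{\mathcal{G}}_{T+}$-measurable $Z$. Since $T_n \downarrow T$, the deterministic convergence from part~(i) gives $\bar\tau_{\bar d_{T_n}} \to \bar\tau_{\bar d_T}$ a.s., and passing to the limit for bounded continuous $F$ and then re-invoking the monotone class argument completes the proof.

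The step I expect to be the real obstacle is the deterministic convergence $\bar\tau_{\bar d_{t'}} \to \bar\tau_{\bar d_t}$ as $t' \downarrow t$: the map $c \mapsto \bar\tau_c$ is genuinely discontinuous (it jumps whenever $c$ passes an isolated point of $\tilde\omega^\lra$), and the point is exactly that evaluating it along the particular approach $c = \bar d_{t'}$, $t' \downarrow t$, avoids this behaviour---which forces the separate treatment of the ``$\bar d_t$ isolated from the right'' and ``$\bar d_t$ a right-accumulation point'' cases---and that the topology on $\tilde\Omega^\ra$ is chosen so that the monotone class step is valid. The discretization of the stopping time and the monotone class bookkeeping are routine.
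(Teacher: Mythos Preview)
Your overall architecture matches the paper's exactly: for part~(i) you approximate $t$ from the right by $t_n \downarrow t$, use $\bar{\mathcal{G}}^\lra_{t+} \subseteq \bar{\mathcal{G}}^\lra_{t_n}$ to invoke hypothesis~(b) at $t_n$, and pass to the limit; for part~(ii) you discretize the stopping time via $T_n := 2^{-n}\lceil 2^n T\rceil$, handle the discrete case by summing over atoms, and let $n \to \infty$. This is precisely what the paper does.

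The one substantive difference is how the convergence $F(\bar\tau_{\bar d_{t_n}}) \to F(\bar\tau_{\bar d_t})$ is obtained. You introduce a product topology (Skorokhod $\times$ Fell) on $\bar\Omega^\ra$, argue that $\bar\tau_{\bar d_{t_n}} \to \bar\tau_{\bar d_t}$ in this topology, and then run the monotone class argument from bounded continuous $F$. The paper avoids topologizing $\bar\Omega^\ra$ altogether: it works directly with test functions of the special form
\[
F(\omega^\ra,\tilde\omega^\ra) = f\bigl(\omega^\ra_{s_1},\ldots,\omega^\ra_{s_\ell},\, r_{s_1}(\tilde\omega^\ra),\ldots,r_{s_\ell}(\tilde\omega^\ra)\bigr)
\]
with $f$ bounded and continuous. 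For such $F$, the convergence $F(\bar\tau_{\bar d_{t_n}}) \to F(\bar\tau_{\bar d_t})$ reduces to the right-continuity of $s \mapsto \omega^\lra_s$ (for the path coordinates) and of $s \mapsto d_s(\tilde\omega^\lra)$ (for the $r_{s_i}$ coordinates), both of which are elementary. The monotone class step then proceeds from this multiplicative family, which generates $\bar{\mathcal{G}}^\ra$ directly. This sidesteps the issues you correctly flag as the ``real obstacle'': there is no need to establish Skorokhod convergence of shifted paths or Fell convergence of shifted sets, and no need to verify that $\bar{\mathcal{G}}^\ra$ sits inside a Borel $\sigma$-field. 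Your route is sound, but the paper's choice of test functions is cleaner and shorter.
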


\begin{proof}
(i) Fix $t \in \reals$. For $n \in \nats$ set $t_n  := t + 2^{-n}$.

Consider $F : \bar \Omega^\ra \to \reals_+$ of the form
\[
F((\omega^\ra, \tilde \omega^\ra)) 
= f(\omega^\ra_{s_1}, \ldots, \omega^\ra_{s_\ell}, r_{s_1}(\tilde \omega^\ra), \ldots, r_{s_\ell}(\tilde \omega^\ra))
\]
for some $0 \le s_1 < s_2 < \ldots < s_\ell$ and bounded, continuous
function $f:\reals^\ell \times (\reals \cup \{+\infty\})^\ell \to \reals_+$.
For such an $F$ we have 
\[
\lim_{n \to \infty} F(\bar \tau_{\bar d_{t_n}}(\bar \omega^\lra)) 
= F(\bar \tau_{\bar d_t}(\bar \omega^\lra))
\]
for all $\bar \omega^\lra \in \bar \Omega^\lra$
and it suffices by a monotone class argument to show that
\[
\bar \Q^\lra \left [F(\bar \tau_{\bar d_{t_n}}) \, | \, \bar{\mathcal{G}}^\lra_{t+} \right] = \bar \Q^\ra[F]
\]
for all $n \in \nats$.  This, however, is clear because
$\bar{\mathcal{G}}^\lra_{t+} \subseteq \bar{\mathcal{G}}^\lra_{t_n}$ and 
\[
\bar \Q^\lra \left [F(\bar \tau_{\bar d_{t_n}}) \, | \, \bar{\mathcal{G}}^\lra_{t_n} \right] = \bar \Q^\ra[F]
\]
by assumption.

\noindent
(ii) For $n \in \nats$ define a $(\bar{\mathcal{G}}^\lra_t)_{t \in \reals}$-stopping time
$T_n$ by declaring that $T_n := \frac{k}{2^n}$ when $T \in [\frac{k-1}{2^n}, \frac{k}{2^n})$,
$k \in \ints$.  

Let $F$ be as in the proof of part (i).
For such an $F$ we have 
\[
\lim_{n \to \infty} F(\bar \tau_{\bar d_{T_n}}(\bar \omega^\lra))
= F(\bar \tau_{\bar d_T}(\bar \omega^\lra))
\]
for all $\bar \omega^\lra \in \bar \Omega^\lra$
and it suffices by a monotone class argument to show that
\[
\bar \Q^\lra \left [F(\bar \tau_{\bar d_{T_n}}) \, | \, \bar{\mathcal{G}}^\lra_{T+} \right] = \bar \Q^\ra[F]
\]
for all $n \in \nats$.
Since $\bar{\mathcal{G}}_{T+} \subseteq \bar{\mathcal{G}}_{T_n+}$ for all $n \in \nats$,
it further suffices to show that
\[
\bar \Q^\lra \left [F(\bar \tau_{\bar d_{T_n}}) \, | \, \bar{\mathcal{G}}^\lra_{T_n+} \right] = \bar \Q^\ra[F].
\]

Fix $n \in \nats$ and suppose that $G$ is a nonnegative  $\bar{\mathcal{G}}^\lra_{T_n+}$-measurable random variable.
We have
\[
\begin{split}
\bar \Q^\lra \left [F(\bar \tau_{\bar d_{T_n}})\,  G \right] 
& =
\sum_{k \in \ints}
\bar \Q^\lra \left [F(\bar \tau_{\bar d_{T_n}}) \, G \, \ind\left\{T_n = \frac{k}{2^n}\right\}\right] \\
&  =
\sum_{k \in \ints}
\bar \Q^\lra \left [F(\bar \tau_{\bar d_{\frac{k}{2^n}}}) \, G  \, \ind\left\{T_n = \frac{k}{2^n}\right\}\right] \\
&  = \bar \Q^\ra[F] \sum_{k \in \ints}
\bar \Q^\lra \left [G  \, \ind\left\{T_n = \frac{k}{2^n}\right\}\right] \\
&  = \bar \Q^\ra[F] \,  
\bar \Q^\lra \left [G \right], \\
\end{split}
\]
where in the penultimate equality we used the fact that $G \, \ind\{T_n = \frac{k}{2^n}\}$ is $\bar{\mathcal{G}}^\lra_{\frac{k}{2^n}}$-measurable
(see, for example, \cite[Lemma 7.1(ii)]{MR1876169}).  This completes the proof.
\end{proof}

\begin{thm}
\label{T:contact_regenerative}
Suppose for the L\'evy process $(X_t + \alpha t)_{t \in \reals}$ that $0$ is regular for $(0,\infty)$.
Then the distribution of $((X_t)_{t \in \reals}, \mathcal{Z})$ is space-time regenerative.
\end{thm}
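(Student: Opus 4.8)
The plan is to verify the hypotheses of Theorem~\ref{T:extension}(i) after reducing to a single time via Remark~\ref{R:zero_enough}. Since the contact set is a translation-covariant functional of the sample path --- the $\alpha$-Lipschitz minorant construction satisfies $m_{X_{s+\cdot}-X_s}(t) = m_X(s+t)-X_s$, so that the contact set of $X_{s+\cdot}-X_s$ equals $\mathcal{Z}-s$ --- the stationarity of the increments of $X$ upgrades to joint stationarity of $((X_t)_{t\in\reals},\mathcal{Z})$ in the sense demanded by Remark~\ref{R:zero_enough}. Hence it suffices to check conditions (a) and (b) of Theorem~\ref{T:extension}(i) in the case $t=0$. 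Write $D := \bar d_0 = \inf\{s>0 : s\in\mathcal{Z}\}$, so that $\bar\tau_{\bar d_0}$ returns the pair $\bigl((X_{D+u}-X_D)_{u\ge 0},\,(\mathcal{Z}\cap[D,\infty))-D\bigr)$; and, because $\bar d_s$ is nondecreasing in $s$ so that $\bar d_s\le \bar d_0 = D$ for every $s\le 0$, the $\sigma$-field $\bar{\mathcal{G}}_0 = \sigma\{\bar d_s, k_{\bar d_s} : s\le 0\}$ is contained in the ``pre-$D$'' $\sigma$-field $\mathcal{H} := \sigma\bigl\{(X_t)_{t\le D},\ \mathcal{Z}\cap(-\infty,D]\bigr\}$.

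Condition (a) at $t=0$ asks that $D<\infty$ almost surely, that is, that $\mathcal{Z}\cap(0,\infty)\ne\emptyset$ almost surely. This follows from $\mathcal{Z}$ being stationary and almost surely non-empty: the events $A_s := \{\mathcal{Z}\cap(s,\infty)\ne\emptyset\}$ decrease as $s$ increases and $\bigcup_{s\in\reals}A_s = \{\mathcal{Z}\ne\emptyset\}$, so $\P(A_s)\to 1$ as $s\to-\infty$; but $\P(A_s)$ is independent of $s$ by stationarity, hence $\P(A_0)=1$. For condition (b) at $t=0$ it is enough --- and this is the real content --- to establish that the post-$D$ pair $\bigl((X_{D+u}-X_D)_{u\ge 0},\,(\mathcal{Z}\cap[D,\infty))-D\bigr)$ is independent of $\mathcal{H}$ and has a distribution that depends on no parameter. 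One then simply \emph{defines} the regeneration law $\bar\Q^\ra$ on $(\bar\Omega^\ra,\bar{\mathcal{G}}^\ra)$ to be this common distribution; conditioning from $\mathcal{H}$ down to its sub-$\sigma$-field $\bar{\mathcal{G}}_0$ yields
\[
\bar\Q^\lra\left[F(\bar\tau_{\bar d_0}) \, | \, \bar{\mathcal{G}}_0\right] = \bar\Q^\ra[F]
\]
for every $\bar{\mathcal{G}}^\ra$-measurable $F\ge 0$, which is condition (b).

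The obstacle is that $D$ is manifestly not a stopping time for the canonical filtration of $X$: locating the first contact point after $0$ requires knowledge of the $\alpha$-Lipschitz minorant, hence of the whole future of the path, so the strong Markov property is unavailable and the independence of the post-$D$ pair from $\mathcal{H}$ is not automatic. The approach --- carried out in Section~\ref{S:after_first_contact} --- is to realize the splitting of $(X,\mathcal{Z})$ at $D$ as the decomposition, at the time of its global minimum over a suitable (possibly random) interval, of an appropriate auxiliary real-valued Markov process built from $X$, and then to invoke the path-decomposition-at-the-global-minimum result of \cite{millarpostmin}, which itself rests on the last-exit decompositions of \cite{MR0297019,MR0334335}. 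The hypothesis that $0$ is regular for $(0,\infty)$ for the L\'evy process $(X_t+\alpha t)_{t\in\reals}$ enters precisely here: it guarantees that the relevant endpoint of that global minimum is regular, so that the decomposition applies in the form giving genuine (unconditional) independence and a parameter-free post-minimum law, rather than in a degenerate conditioned form. I expect this identification, together with the verification of the hypotheses of the Millar-type theorem, to be the main difficulty; granting it, the remainder of the argument is the bookkeeping above.
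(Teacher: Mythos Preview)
Your proposal is correct and follows essentially the same route as the paper: reduce to $t=0$ via Remark~\ref{R:zero_enough} using the translation-covariance of the minorant (Lemma~\ref{L:space-time_homogeneous}), then verify the hypotheses of Theorem~\ref{T:extension}(i) by invoking Theorem~\ref{T:post-D}, whose proof in Section~\ref{S:after_first_contact} is precisely the Millar-type global-minimum decomposition you describe. The paper's own proof is the one-line citation of these three results; you have simply unpacked the logical dependencies, including the observation (left implicit in the paper) that $(\mathcal{Z}\cap[D,\infty))-D$ and $\mathcal{Z}\cap(-\infty,D]$ are measurable with respect to the post-$D$ and pre-$D$ paths respectively once one knows $D\in\mathcal{Z}$, which is what allows the path-only independence of Theorem~\ref{T:post-D} to upgrade to independence of the full pair.
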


\begin{proof}
Use Theorem~\ref{T:post-D} below, Remark~\ref{R:zero_enough}, and part (i) of Theorem~\ref{T:extension}.
\end{proof}

\begin{remark}
\label{R:time-reversal}
If $0$ is not regular for $(0,\infty)$ for the L\'evy process $(X_t + \alpha t)_{t \in \reals}$, then $0$ is regular for $(-\infty, 0)$ for the L\'evy process $(X_t - \alpha t)_{t \in \reals}$.  
Equivalently, if $0$ is not regular for $(0,\infty)$ for the L\'evy process $(X_t + \alpha t)_{t \in \reals}$, then $0$ is regular for $(0,\infty)$ for the L\'evy process $(-X_t + \alpha t)_{t \in \reals}$ and hence for the L\'evy process $(X_{-t-}+\alpha t)_{t \in \reals}$.  
Thus, either the distribution of $((X_t)_{t \in \reals}, \mathcal{Z})$ is space-time regenerative or the distribution of $((X_{-t-})_{t \in \reals}, \mathcal{Z})$ is space-time regenerative.
\end{remark}

Write $\tilde \pi^\lra: \bar \Omega^\lra \to \tilde \Omega^\lra$ for the projection $\bar \omega^\lra = (\omega^\lra, \tilde \omega^\lra) \mapsto \tilde \omega^\lra$.
Define $\tilde \pi^\ra: \bar \Omega^\ra \to \tilde \Omega^\ra$ similarly.
If $\bar \Q^\lra$ is space-time regenerative with regeneration law $\bar \Q^\ra$, then, in the sense of \cite{fitztaksar}, the push-forward
of $\bar \Q^\lra$ by $\tilde \pi^\lra$ is regenerative with regeneration law the push-forward of $\bar \Q^\ra$ by $\tilde \pi^\ra$.  It follows that 
$\bar \Q^\ra\{(\omega^\ra, \tilde \omega^\ra) : \tilde \omega^\ra \; \text{is discrete}\}$ is either $1$ or $0$.  

Suppose that the probability in question is $1$.
Define $(\bar{\mathcal{G}}^\lra_{t})_{t \in \reals}$-stopping times $T_1, T_2, \ldots$ with $0 < T_1 < T_2 < \ldots$ almost surely by
\[
T_1 := \bar d_0
\]
and
\[
T_{n+1}(\bar \omega^\lra) := \bar d_{T_n(\bar \omega^\lra)}(\bar \omega^\lra) = T_n(\bar \omega^\lra) + \bar d_0  \circ \bar \theta_{T_n(\bar \omega^\lra)} (\bar \omega^\lra), \quad n \in \nats,
\]
where $\bar \theta_t: \bar \Omega^\lra \to \bar \Omega^\lra$, $t \in \reals$, are the shift maps given by
$\bar \theta(\omega^\lra, \tilde \omega^\lra) =((\omega^\lra_{t+u})_{u \in \reals}, \tilde \omega^\lra - t)$.
Let  $\partial$ be an isolated cemetery state adjoined to $\reals$.
Define c\`adl\`ag $\reals \cup \{\partial\}$-valued processes $Y^n = (Y^n_t)_{t \in \reals_+}$, $n \in \nats$, by
\[
Y^n_t(\omega^\lra, \tilde \omega^\lra)
:=
\begin{cases}
\pi^\ra\circ \tau_{\bar T_n(\omega^\lra, \tilde \omega^\lra)}(\omega^\lra, \tilde \omega^\lra)_t,& 0 \le t < \bar d_0  \circ \bar \theta_{T_n(\bar \omega^\lra)} (\bar \omega^\lra)=\zeta_n, \\
\partial,& t \ge \bar d_0  \circ \bar \theta_{T_n(\bar \omega^\lra)} (\bar \omega^\lra)=\zeta_{n}, \\
\end{cases}
\]
where $\pi^\ra : \bar \Omega^\ra \to \Omega^\ra$ is the projection $(\omega^\ra, \tilde \omega^\ra) \mapsto \omega^\ra$.
Then, under $\bar \Q^\lra$, the sequence $Y^n$, $n \in \nats$, is independent and identically distributed.

%To see that, it suffices to see that if we take $t_1$ and $t_2$ two positive real numbers we have for every two positive Borel functions $f$ and $g$
%\begin{equation*}
%\begin{split}
%\bar \Q^\lra[f(Y^n_{t_1})g(Y^{n+1}_{t_2})\ind_{\{t_1<\zeta_n\}}\ind_{\{t_2<\zeta_{n+1}\}}]&=\bar \Q^\lra[f(\omega^\lra_{T_n+t_1}-\omega^\lra_{T_n})g(\omega^\lra_{T_{n+1}+t_2}-\omega^\lra_{T_{n+1}})\\\ind_{\{t_1<\zeta_n\}}\ind_{\{t_2<\zeta_{n+1}\}}]\\
%&=\bar \Q^\lra[f(\omega^\lra_{T_n+t_1}-\omega^\lra_{T_n})\ind_{\{t_1<\zeta_n\}} \\ \bar \Q^\lra[g(\omega^\lra_{T_{n+1}+t_2}-\omega^\lra_{T_{n+1}})\ind_{\{t_2<\zeta_{n+1}\}}\vert \bar{\mathcal{G}}^\lra_{T_n+t_1}]]\\
%&=\bar \Q^\lra[f(\omega^\lra_{T_n+t_1}-\omega^\lra_{T_n})\ind_{\{t_1<\zeta_n\}}\\ 
%\bar \Q^\lra[g((\tau_{\bar{d}_{T_n+t_1}(\omega^\lra)})_{t_2}) \ind_{\{t_2<d_0(\tilde{\tau}_{\bar{d}_{T_n+t_1}}(\tilde \omega^\lra))\}} \vert \bar{\mathcal{G}}^\lra_{T_n+t_1}]]\\
%&=\bar \Q^\ra[f(\omega^\lra_{t_1}\ind_{\{t_1<\bar{d}_0\}}]\bar \Q^\ra[g(\omega^\lra_{t_2}) \ind_{\{t_2<\bar{d}_0\}}]
%\end{split}
%\end{equation*}
%Where we used (ii) of Theorem 3.3 in the last line, for the stoppings time $T_n+t_1$ and $T_{n-1}$. By convention $T_0=0$. By using the same method for general finite dimension distributions of the different excursions, we get the desired result. \\

The path of of each $Y_n$ lies in the set $\Omega^{0,\partial}$ consisting of c\`adl\`ag functions $f:\reals_+ \to \reals \cup \{\partial\}$ such that $f(0) = 0$, $0 < \inf\{s \ge 0: f(s) = \partial\} < \infty$, and $f(t) = \partial$ for all $t \ge \inf\{s \ge 0: f(s) = \partial\}$.

When the probability in question is $0$ there is a local time on our regenerative set and we can construct a Poisson random measure on the set $\reals \times \Omega^{0,\partial}$
that records the excursions away from the contact set and the order in which they occur.  We use the following theorem which is a restatement of \cite[Corollary~3.1]{zbMATH03665914}.\\

\begin{thm} 
\label{T:nested_arrays}
Let $(O_k)_{k \in \nats}$ be an increasing family of measurable sets in a measurable space $(O,\mathcal{O})$ such that $O = \bigcup_{k \in \nats} O_k$.
Let $\mathbf{V}$ be an $O$-valued point process; that is, $\mathbf{V} = (\mathbf{V}_t)_{t \ge 0}$ is a stochastic process with values in $O \cup \{\dag\}$ for some adjoined point $\dag$ such that $\{t \ge 0: \mathbf{V}_t \ne \dag\}$ is almost surely countable. 
Suppose that $\{t \ge 0 : \mathbf{V}_t \in O_k\}$ is almost surely discrete and unbounded for all $k \in \nats$ while $\{t \ge 0 : \mathbf{V}_t \in O\}$ is almost surely not discrete. Suppose that the sequence $\{\mathbf{V}_t : \mathbf{V}_t \in O_k\}$ is independent and identically distributed for each $k \in \nats$. 
For $k \in \nats$ define  $(N_k(t))_{t \ge 0}$ by setting $N_k(t) = \#\{0 \le u \le t : \mathbf{V}_u \in O_k\}$ for $t \ge 0$.
 For $k \in \nats$ set $T_k = \inf\{t > 0 : \mathbf{V}_t \in O_k\}$ and put $p_k = \P\{\mathbf{V}_{T_k} \in O_1\}$. 
Then, for almost all $\omega \in \Omega$, uniformly for bounded $t \ge 0$,
\[
\lim_{k \rightarrow \infty} p_k N_k(t,\omega)=L(t,\omega),
\]
where $L(t,\omega)$ is continuous and nondecreasing in $t \ge 0$, and strictly increasing on $\{t \ge 0: \mathbf{V}(t,\omega) \neq \dag \}$. 
For such $\omega$, set
\[
\mathbf{V}^{*}(s,\omega)
= 
\begin{cases}
 V(t,\omega),&  \text{if } s=L(t,\omega), \\
  \dag,& \text{otherwise.}
\end{cases}
\]
Then $\mathbf{V}^{*}$ is a homogeneous Poisson point process; that is, the random measure that puts mass $1$ at each point $(t,f) \in \reals_+ \times O$ such that $\mathbf{V}^*_t = f$ is a Poisson random measure with intensity of the form $\lambda \otimes \nu$, where $\lambda$ is Lebesgue measure on $\reals_+$ and $\nu$ is a $\sigma$-finite measure on $(O,\mathcal{O})$.  
Moreover, for almost all $\omega \in \Omega$ for all $t$ with $\mathbf{V}(t,\omega) \ne \dag$, $\mathbf{V}(t,\omega) = \mathbf{V}^{*}(L(t,\omega), \omega)$.
\end{thm}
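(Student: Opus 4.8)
The plan is to run the classical ``nested arrays'' construction of local time and excursion measure, of which the statement above is a restatement. Two reductions organize the argument: first establish, for each fixed $t$, that $p_k N_k(t)$ converges almost surely to a limit $L(t)$ with the asserted regularity, and only afterwards upgrade to almost-sure uniform convergence on compact $t$-intervals (the functions $p_k N_k(\cdot)$ are nondecreasing and the limit will be continuous, so Dini's theorem applies). The whole argument rests on a consistency relation that the i.i.d.\ hypotheses force on the laws of the excursions, so I would establish that first.

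\emph{Consistency of the excursion laws.} Write $\mu_k$ for the common law of the level-$k$ excursions $\mathbf{V}_{T_k^{(j)}}$. For $m \le k$ the times $t$ with $\mathbf{V}_t \in O_m$ are exactly the times $t$ with $\mathbf{V}_t \in O_k$ for which, in addition, $\mathbf{V}_t \in O_m \subseteq O_k$; hence, listed in increasing time order, the level-$m$ excursion values are the sub-sequence of the i.i.d.\ sequence of level-$k$ excursion values obtained by keeping the terms that land in $O_m$. A selected sub-sequence of an i.i.d.\ sequence is i.i.d.\ with the conditioned law, so $\mu_m(\cdot) = \mu_k(\cdot \cap O_m)/\mu_k(O_m)$. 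With $q_{k,m} := \mu_k(O_m)$ this gives $p_k = \mu_k(O_1) = q_{k,m}\,p_m$ for $m \le k$, so $(p_k)_k$ is nonincreasing, and in fact $p_k \downarrow 0$: otherwise the level-$1$ excursions would form a fixed positive proportion of the level-$k$ excursions for every $k$, forcing $N_1(t)$ to be at least a fixed multiple of $N_k(t)$, which diverges as $k \to \infty$ because $\{t : \mathbf{V}_t \in O\}$ is not discrete, contradicting the discreteness of $\{t : \mathbf{V}_t \in O_1\}$. I would also fix here the candidate intensity $\nu$ on $O$ by $\nu(A) := \mu_k(A)/p_k$ for $A \subseteq O_k$; the consistency relation makes this independent of $k$, and $\nu(O_k) = 1/p_k < \infty$, so $\nu$ is $\sigma$-finite.

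\emph{Convergence and regularity of $L$.} Fix $t$. Listing the level-$k$ excursion values in time order, the positions of the ones lying in $O_m$ have i.i.d.\ geometric gaps of mean $q_{k,m}^{-1} = p_m/p_k$, independent of the retained $O_m$-subsequence; this yields a decomposition
\[
N_k(t) \;=\; \sum_{j=1}^{N_m(t)} G^{(m,k)}_j \;+\; R^{(m,k)}(t),
\]
where the $G^{(m,k)}_j$ are i.i.d.\ geometric of mean $p_m/p_k$ and $0 \le R^{(m,k)}(t)$ is a truncated geometric variable of the same mean counting the level-$k$ excursions in $[0,t]$ after the last level-$m$ one. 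Multiplying by $p_k$ and conditioning on the level-$m$ data, one checks that $\E[\,p_k N_k(t) \mid \text{level-}m \text{ data}\,]$ lies within $O(p_m)$ of $p_m N_m(t)$ and that the conditional variance is $O\big(p_m \cdot p_m N_m(t)\big)$; hence $(p_k N_k(t))_k$ is an $L^2$-Cauchy near-martingale and converges almost surely to a limit $L(t)$. Each $N_k(\cdot)$ is nondecreasing with unit jumps exactly at the level-$k$ excursion times, so $L$ is nondecreasing, and its jump at any $t$ is at most $\lim_k p_k = 0$, so $L$ is continuous; Dini's theorem then upgrades the convergence to uniform on compacts. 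For strict monotonicity on $\{t : \mathbf{V}_t \ne \dag\}$: when $t_1 < t_2$ both belong to this set, the interval $(t_1,t_2)$ contains infinitely many times $s$ with $\mathbf{V}_s \ne \dag$ (it cannot be swallowed by a single excursion interval, and $\{t : \mathbf{V}_t \in O\}$ is not discrete), so applying the decomposition above on $(t_1,t_2]$ in place of $[0,t]$ yields a strictly positive lower bound for $L(t_2) - L(t_1)$.

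\emph{The Poisson point process.} By the strict monotonicity just established, the time change $\mathbf{V}^*$ in the statement is well defined and $\mathbf{V}(t) = \mathbf{V}^*(L(t))$ for all $t$ with $\mathbf{V}_t \ne \dag$ by construction. To see that the random measure attached to $\mathbf{V}^*$ is Poisson with intensity $\lambda \otimes \nu$, fix disjoint $A_1,\dots,A_r \in \mathcal{O}$ with each $A_i \subseteq O_{k_0}$ and disjoint bounded intervals $I_1,\dots,I_r \subseteq \reals_+$. For $k \ge k_0$ each level-$k$ excursion has value in $A_i$ independently with probability $\mu_k(A_i) = p_k\,\nu(A_i)$, and by the uniform convergence of the previous step the rescaled index $p_k \cdot (\text{index})$ of a level-$k$ excursion occurring in $[0,\sup_i \sup I_i]$ is uniformly close to its true local time; hence the number of points of $\mathbf{V}^*$ with value in $A_i$ and local-time coordinate in $I_i$ is a limit of Binomial$\big(\approx |I_i|/p_k,\ p_k\nu(A_i)\big)$ counts, which are asymptotically independent over $i$ and converge in law to independent Poisson$(|I_i|\,\nu(A_i))$ variables. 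That is precisely the defining property of a homogeneous Poisson point process with intensity $\lambda \otimes \nu$. I expect the real difficulty to lie in the convergence step: controlling the edge term $R^{(m,k)}(t)$ together with the geometric-sum fluctuations, simultaneously in $t$, tightly enough to obtain almost-sure (not merely in-probability) convergence and to pin down the exact set carrying $dL$; once the local-time clock is identified as the almost-sure uniform limit of the discrete clocks $p_k N_k(\cdot)$, the Bernoulli-to-Poisson passage is routine.
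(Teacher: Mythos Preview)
The paper does not prove this theorem. It is stated as a restatement of \cite[Corollary~3.1]{zbMATH03665914} (Greenwood and Pitman, \emph{Construction of local time and Poisson point processes from nested arrays}) and is invoked without proof. So there is nothing to compare against in the paper itself; your sketch is an attempt to reproduce the original Greenwood--Pitman argument, and its overall architecture (consistency of the level-$k$ laws, a near-martingale convergence for $p_k N_k(t)$, Dini upgrade to uniformity, Bernoulli-to-Poisson thinning after the time change) is indeed the standard route.

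Two places in your sketch would need more care if you were writing this out in full. First, the strict-monotonicity step: knowing only that $\{t:\mathbf V_t\in O\}$ is not discrete does not by itself guarantee that between any two of its points there lie infinitely many others, so your sentence ``it cannot be swallowed by a single excursion interval'' is not yet justified; what one actually shows is that $dL$ charges every open interval meeting the closure of $\{t:\mathbf V_t\ne\dag\}$, and this comes out of the same geometric-gap estimate applied on the interval rather than from a topological claim about the index set. Second, the passage from ``$L^2$-Cauchy near-martingale'' to almost-sure convergence needs an explicit summability or Doob-type maximal inequality (e.g.\ along a subsequence with $\sum p_{k_j}<\infty$, then fill in), which you gesture at but do not pin down. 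These are exactly the points you flag as the ``real difficulty,'' and Greenwood--Pitman handle them; your outline is consistent with theirs.
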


We can apply this theorem if we consider $O$ to be the space $\Omega^{0,\partial}$ of c\`adl\`ag paths that vanish at the origin and have finite lifetimes, and take $O_k$ to be the subspace of paths with lifetime at least $\frac{1}{k}$. 
We define $\mathbf{V}$ to be the point process of the excursions such that, for every $t \ge 0$, $\mathbf{V}_t$ is equal to the excursion whose right end point is $t$, with the convention that $\mathbf{V}_t = \dag$ if $t$ is not the right end point of an excursion. In the case where $\mathcal{Z}$ is not discrete, all the conditions of Theorem~\ref{T:nested_arrays} can readily be checked and we obtain a time-changed Poisson point process.

\section{The process after the first positive point in the contact set}
\label{S:after_first_contact}

\begin{notation}
\label{N:G_D}
For $t \in \reals$ set $G_t :=\sup (\mathcal{Z} \cap (-\infty,t))$  and $D_t : =\inf (\mathcal{Z} \cap (t,+\infty))$. 
Put $G := G_0$ and $D := D_0$.
\end{notation}

\begin{remark}
\label{R:S_construction}
We have from Lemma~\ref{L:recipe}  that
\[
D = \inf\{t  \ge S : X_t \wedge X_{t-}+\alpha t = \inf \{X_u +\alpha u : u \geq S \}\},
\]
where
\[
S = S_0 := \inf\{s>0 : X_s \wedge X_{s-}-\alpha s \leq \inf\{X_u -\alpha u : u \leq 0 \} \}
\]
because almost surely $X_{S} \le X_{S-}$. The latter result was shown in the proof of \cite[Theorem~2.6]{zbMATH06288068}.
\end{remark}

\begin{notation} 
For $t \in \reals$ put $\mathcal{F}_t  = \bigcap_{\epsilon > 0} \sigma\{X_s : -\infty < s \le t + \epsilon\}$.
Define the $\sigma$-field $\mathcal{F}_{U}$ for any nonnegative random time $U$ to be the $\sigma$-field generated by all the random variables of the form $\xi_U$ where $(\xi_t)_{t \in \mathbb{R}}$ is an optional process with respect to the filtration $(\mathcal{F}_{t})_{t \in \reals}$.  Similarly, define $\mathcal{F}_{U-}$ to be the $\sigma$-field generated by all the random variables of the form $\xi_U$ where $(\xi_t)_{t \in \mathbb{R}}$ is now a previsible process with respect to the filtration $(\mathcal{F}_{t})_{t \in \reals}$
\end{notation}

\begin{notation}
Let $\tilde{X} = (\tilde \Omega, \tilde{\mathcal{F}},  \tilde{\mathcal{F}}_t, \tilde X_t, \tilde \theta_t, \tilde{\mathbb{P}}^x)$ be a Hunt process such that the distribution of $\tilde{X}$ under $\tilde{\mathbb{P}}^x$ is that of $(x + X_t + \alpha t)_{t \ge 0}$.  Put $\breve T:=\inf\{ t > 0: \tilde{X}_s \wedge \tilde{X}_{s-} <0\}$, and for $t \ge 0$ and $x,y > 0$ put
\[
\breve{H}_t(x,dy) 
:= 
\tilde{\mathbb{P}}^x\{\tilde{X}_t \in dy , t < \breve T\}
\frac{\tilde{\mathbb{P}}^y\{\breve T=\infty\}}{\tilde{\mathbb{P}}^x\{\breve T=\infty\}}.
\]
We interpret $(\breve{H}_t)_{t \ge 0}$ as the transition functions of the Markov process $\tilde X$ conditioned to stay positive.
\end{notation}

\begin{thm}
\label{T:post-D}
Suppose that $0$ is regular for $(0,\infty)$ for the Markov process $\tilde X$. 
Then the process $(X_{t+D}-X_D)_{t \geq 0}$ is independent of $(X_t ,  -\infty < t \le D)$.  
Moreover, the process $(X_{t+D}-X_D + \alpha t)_{t \geq 0}$ is Markovian with transition functions $(\breve H_t)_{t \ge 0}$
and a certain family of entrance laws $(\breve Q_t)_{t \ge 0}$.
%, and has the same distribution as $(X_t^{*}-\alpha t)_{t \geq 0}$ where $(X_t^{*})_{t \geq 0}$ is the process $(X_t+\alpha t)_{t \geq 0}$ conditioned to stay positive in a sense we will explain later.
\end{thm}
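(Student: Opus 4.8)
The plan is to reduce Theorem~\ref{T:post-D} to the Millar--type last-exit/path-decomposition result quoted from \cite{millarpostmin}, applied to the L\'evy process $X^+ := (X_t + \alpha t)_{t \in \reals}$, via the description of $D$ in Remark~\ref{R:S_construction}. The key point is that, conditionally on $\mathcal{F}_S$, the post-$S$ process $(X^+_{S+u} - X^+_S)_{u \ge 0}$ is again a copy of the L\'evy process $X^+$ started at $0$ (here $S$ is the stopping time of Remark~\ref{R:S_construction}, so this is just the strong Markov property), and $D - S$ is the time at which this shifted process attains its global minimum over $[0,\infty)$ — indeed $D = \inf\{t \ge S : X^+_t \wedge X^+_{t-} = \inf_{u \ge S} X^+_u\}$. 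Thus I would:

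\emph{Step 1.} Record that $X^+$ has a negative drift in the relevant regime ($\E[X^+_1] = \E[X_1] + \alpha > 0$ would be the wrong sign; in fact the hypothesis $|\E[X_1]| < \alpha$ forces $\E[X^+_1] > 0$, so $X^+ \to +\infty$ and the global infimum of $(X^+_{S+u} - X^+_S)_{u \ge 0}$ over $[0,\infty)$ is almost surely attained at a finite time, and is achieved by a jump from above so that the infimum of the process equals $\min$ of $X^+$ and $X^+_{t-}$ there). This is exactly the setting of Millar's post-minimum decomposition for a real-valued Markov (here L\'evy) process drifting to $+\infty$.

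\emph{Step 2.} Apply the cited result from \cite{millarpostmin}, together with the general last-exit decompositions of \cite{MR0297019, MR0334335}, to the Markov process $\tilde X$ (which under $\tilde{\mathbb P}^0$ is $(X_t + \alpha t)_{t\ge 0}$, and for which $0$ is assumed regular for $(0,\infty)$): at the time $\rho$ of its global minimum, the pre-$\rho$ and post-$\rho$ parts are conditionally independent given the minimum value, the post-$\rho$ part is Markovian with the $h$-transform transition functions $\breve H_t$ (the process conditioned to stay positive, since after its global minimum the process never returns to that level), and the post-$\rho$ part starts according to an entrance law $(\breve Q_t)$ determined by the last-exit kernel. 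Then translate this back through the decomposition $D = S + \rho\circ\theta_S$: since $(X^+_{S+u}-X^+_S)_{u\ge0}$ is independent of $\mathcal{F}_S$ with the law of $X^+$, and $\rho\circ\theta_S$ is the global-minimum time of that independent copy, the post-$D$ increment process $(X_{D+t}-X_D)_{t\ge 0} = (X^+_{S+\rho+t}-X^+_{S+\rho}) - \alpha t$ is independent of everything measurable before $D$, in particular of $(X_t)_{-\infty<t\le D}$ (note $(X_t)_{t\le D}$ is a function of $(X_t)_{t\le S}$ and of the pre-$\rho$ part of the independent copy, by Remark~\ref{R:S_construction}). Subtracting the deterministic drift $\alpha t$ converts the $\breve H_t$-Markov, $(\breve Q_t)$-entrance-law description of $(X^+_{D+t}-X^+_D)_{t\ge0}$ into the claimed statement for $(X_{t+D}-X_D+\alpha t)_{t\ge 0}$.

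\emph{The main obstacle} is the bookkeeping around the fact that $D$ is not a stopping time and that the decomposition must be anchored at the \emph{honest} stopping time $S$ rather than at $0$: one must be careful that ``$(X_t)_{-\infty<t\le D}$'' is indeed measurable with respect to $\mathcal{F}_S$ together with the pre-minimum $\sigma$-field of the shifted process — this uses Remark~\ref{R:S_construction} (and the auxiliary Lemma~\ref{L:recipe}) to express $D$ purely in terms of the path on $[S,\infty)$ — and that the conditional-independence-given-the-minimum in Millar's theorem upgrades, after integrating out the minimum value (whose law is computed from the last-exit kernel), to unconditional independence of the post-$D$ process from the whole past. A secondary technical point is verifying the regularity hypothesis ``$0$ regular for $(0,\infty)$ for $\tilde X$'' is exactly what makes $0$ a regular point for the global-minimum decomposition of \cite{millarpostmin}, so that the entrance law $(\breve Q_t)$ is nondegenerate; the complementary non-regular case is what Remark~\ref{R:time-reversal} handles by time reversal.
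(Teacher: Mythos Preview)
Your overall strategy---strong Markov at the stopping time $S$, then Millar's post-minimum decomposition applied to the independent copy $\check X = (X_{S+t}-X_S+\alpha t)_{t\ge 0}$---is exactly the paper's route, and your identification $D = S + \check T$ via Remark~\ref{R:S_construction} is correct.

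There is, however, a genuine gap in your Step~2 / ``main obstacle'' paragraph. Millar's theorem gives that the post-minimum process $(\tilde X_{\tilde T + t})_{t\ge 0}$ is \emph{conditionally} independent of $\tilde{\mathcal F}_{\tilde T}$ \emph{given} $\tilde X_{\tilde T}$, with transition kernels $H_t(\tilde X_{\tilde T};\cdot,\cdot)$ and entrance laws $Q_t(\tilde X_{\tilde T};\cdot)$ that a~priori depend on the minimum value. Your phrase ``integrating out the minimum value \ldots\ upgrades \ldots\ to unconditional independence'' is not valid reasoning: conditional independence given a variable $M$ plus marginalizing over $M$ does not produce independence (take the trivial example $A=B=M$). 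What actually does the work is the \emph{spatial homogeneity} of the L\'evy process: under $\tilde{\mathbb P}^x$ the post-minimum process has the same law as $x$ plus the post-minimum process under $\tilde{\mathbb P}^0$, which forces $Q_t(x;x+A)=Q_t(0;A)=:\breve Q_t(A)$ and $H_t(x;x+y,x+A)=H_t(0;y,A)=\breve H_t(y,A)$. Only then does the \emph{increment} process $(\tilde X_{\tilde T+t}-\tilde X_{\tilde T})_{t\ge 0}$ become genuinely independent of $\tilde{\mathcal F}_{\tilde T}$ (not merely conditionally so). You are looking at the right object---the increments---but you have not supplied the reason its law is free of the minimum.

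A second point you should make precise is the passage from ``independent of $\mathcal F_S$ together with the pre-minimum $\sigma$-field of $\check X$'' to ``independent of $\sigma\{X_t:\,t\le D\}$''. Saying $(X_t)_{t\le D}$ is a function of $(X_t)_{t\le S}$ and the pre-$\rho$ piece is the right intuition, but since $D$ is not a stopping time one has to check carefully which $\sigma$-field on the $\check X$ side contains the pre-minimum path and that the Millar independence is from that $\sigma$-field. The paper handles this via an auxiliary lemma (Lemma~\ref{optional:filtration}) showing independence from $\mathcal F_{D-}\vee\sigma\{X_D\}$, followed by a short argument that $\sigma\{X_t:\,t\le D\}\subseteq \mathcal F_{D-}\vee\sigma\{X_D\}$. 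Your sketch gestures at this but does not close it.
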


\begin{proof} 
%Throughout our proof we will work in the canonical space $\Omega$ that is the space of c\`adl\`ag paths and we associate to it the family of probability measures $\{ \mathbb{P}^x : x \in \mathbb{R} \}$ which is just the distribution of the process $(X_t+x)_{t \geq 0}$. \\
Because $(X_t)_{t \in \reals}$ is a two-sided L\'evy process and $S$ is a stopping time, the process $\check X := (X_{t+S}-X_{S}+\alpha t)_{t \geq 0}$ is, by the strong Markov property, independent of $\mathcal{F}_{S}$  and has the same distribution as the process $\tilde X$ under $\tilde{\mathbb{P}}^0$. 

By \cite[Proposition~2.4]{millarzeroone}, the set $\{t \ge 0: \tilde{X}_t \wedge \tilde{X}_{t-} = \inf\{\tilde{X}_s : s \ge 0\}\}$ consists $\tilde{\mathbb{P}}^x$-almost surely of a single point $\tilde T$ for all $x \in \reals$.
Consequently, the set $\{t \ge 0: \check{X}_t \wedge \check{X}_{t-} = \inf\{\check{X}_s : s \ge 0\}\}$ also consists almost surely of a single point $\check T$.
From Remark~\ref{R:S_construction} we have $D=S+\check{T}$.

Because $0$ is regular for $(0,\infty)$ for the Markov process $\tilde X$ and thus $\tilde{X}_{\tilde T}=\inf \{\tilde{X}_s : s \ge 0\}$, it follows from the sole theorem in \cite{millarpostmin} that the process $(\tilde X_{\tilde T + t})_{t \ge 0}$ is independent of $\tilde{\mathcal{F}}_{\tilde T}$ given $\tilde{X}_{\tilde{T}}$.

Moreover, there exists a family of entrance laws $(Q_t(x;\cdot))_{t \ge 0}$ for each $x \in \reals$ and a family of transition functions $(H_t(x; \cdot, \cdot)_{t \ge 0}$ for each $x \in \reals$ such that
\[
\tilde{\mathbb{P}}^{x}\{\tilde{X}_{t+\tilde{T}} \in A \, \vert \, \tilde{\mathcal{F}}_{\tilde{T}}\}
=
Q_t(\tilde{X}_{\tilde{T}}; A), \quad t \ge 0,
\]
and
\[
\tilde{\mathbb{P}}^{x}\{\tilde{X}_{t+\tilde{T}} \in A \, \vert \, \tilde{\mathcal{F}}_{\tilde{T}+s}\}
=
H_{t-s}(\tilde{X}_{\tilde{T}}; \tilde{X}_{\tilde{T}+s}, A), \quad 0<s<t.
\]
Using the fact that the processes $(x + \tilde{X}_{t+\tilde{T}})_{t\geq 0}$ under $\tilde{\mathbb{P}}^{0}$ and $(\tilde{X}_{t+\tilde{T}})_{t\geq 0}$ under $\tilde{\mathbb{P}}^{x}$ have the same law,
%Here we see $\tilde{X}$ as the canonical process $X_t(\omega)=\omega(t)$ and thus $\tilde{T}(\omega)=\text{sup} \{ t \geq 0 : \omega(t)=\text{inf}\{ \omega(s) : s \geq 0 \} \}$. Lets take a random variable $B$ that is $\tilde{\mathcal{F}}_{\tilde{T}}$ measurable, we have for every $t>0$ 
%\begin{equation*}
%\begin{split}
%\mathbb{E}^x[\ind_{\{\tilde{X}_{t+\tilde{T}} \in A\}}B]&=\mathbb{E}^x[Q_t(\tilde{X}_{\tilde{T}},A)B]\\
%&=\mathbb{E}^0[Q_t(\tilde{X}_{\tilde{T}}+x,A)B(\omega+x)]\text{  }  (\tilde{T} \text{ is the same for } \omega \text{ and } \omega+x)\\
%\end{split}
%\end{equation*}
%
%On the other hand, we also do have 
%\[
% $\mathbb{E}^x[\ind_{\{\tilde{X}_{t+\tilde{T}} \in A\}}B]=\mathbb{E}^{0}[\ind_{\{\tilde{X}_{t+\tilde{T}} \in A-x\}}B(\omega+x)]=\mathbb{E}^0[Q_t(\tilde{X}_{\tilde{T}},A-x)B(\omega+x)]$
% \] 
% then 
it follows that $Q_t(x; x+A)=Q_t(0,A)$ and $H_t(x; x+y, x+A)=H_t(0; y,A)$.
Thus the process $(\tilde{X}_{t+\tilde{T}}-\tilde{X}_{\tilde{T}})_{t \geq 0}$ is independent of $\mathcal{\tilde{F}}_{\tilde T}$ and, moreover, this process is Markovian with the entrance law $A \mapsto Q_t(0; A) =: \breve Q_t(A)$, $t \ge 0$ and transition functions $(y,A) \mapsto H_t(0;y,A) = \breve H_t(y,A)$, $t \ge 0$.
Applying Lemma ~\ref{optional:filtration} we get that $(X_{t+D}-X_D + \alpha t)_{t \geq 0}$ = $(\check X_{t + \check T} - \check X_{\check T})_{t \ge 0}$ is independent of $\mathcal{F}_{D-} \vee \sigma \{X_D\}$ and Markovian with transition functions $(\breve H_t)_{t \ge 0}$
and entrance laws $(\breve Q_t)_{t \ge 0}$. 

Introduce the killed process $(\bar{X})_{t \in \reals}$ defined  by
\[
\bar{X}_t :=
\begin{cases}
        X_t, &   t < D, \\
       X_D,& t = D, \\
         \partial,  &  t > D, \\       
\end{cases}
\]
where $\partial$ is an adjoined isolated point.
To complete the proof, it suffices to show that
$\sigma\{X_t, -\infty < t \le D\} \equiv \sigma\{\bar{X}_t, t \in \reals\} \subseteq \mathcal{F}_{D-} \vee \sigma \{X_D\}$; that is,
%then we have for all $t \in \reals$, 
%\[
%\bar{X}_t=X_t\ind_{\{ t < D\}} +X_D \ind_{\{ t =D\}}+\partial \ind_{\{ t >D \}}
%\]
that $\bar X_t$ is $\mathcal{F}_{D-} \vee \sigma \{X_D\}$-measurable for all $t \in \reals$.
For all $u \in \reals$ the process $(\ind_{\{s > u\}})_{s \in \reals}$ is left-continuous, right-limited, and $(\mathcal{F}_s)_{s \in \reals}$-adapted.  Therefore, for all $u \in \reals$, the random variable $\ind_{\{D > u \}}$ is $\mathcal{F}_{D-}$-measurable and so the random variable $D$ is $\mathcal{F}_{D-}$-measurable.  In particular,  the event $\{ t >D \}$ is $\mathcal{F}_{D-}$-measurable.  Next, the process $(X_t \ind_{t < s})_{s \in \reals}$ is also left-continuous, right-limited, and $(\mathcal{F}_s)_{s \in \reals}$-adapted and hence the random variable $X_t\ind_{\{ t < D\}}$ is $\mathcal{F}_{D-}$-measurable.  Consequently, for any Borel subset $A \subseteq \reals \cup \{\partial\}$ we have
\[
\begin{split}
& \{\bar X_t \in A\} \\
& \quad = (\{\bar X_t \in A\} \cap \{t < D\}) \cup (\{\bar X_t \in A\} \cap \{t = D\}) \cup (\{\bar X_t \in A\} \cap \{t  > D\}) \\
& \quad = 
\begin{cases}
(\{X_t\ind_{\{ t < D\}} \in A\} \cap \{t < D\}) \cup ( \{X_D \in A\} \cap \{t = D\}) \cup \{t > D\}, & \partial \in A,\\
(\{X_t\ind_{\{ t < D\}} \in A\} \cap \{t < D\}) \cup ( \{X_D \in A\} \cap \{t = D\}), & \partial \notin A,\\
\end{cases} \\
& \quad \in
\mathcal{F}_{D-} \vee \sigma \{X_D\}, \\
\end{split}
\]
as claimed.
%Finally, as $D \in \mathcal{F}_{D-}$, the product :
%\[
%X_D \ind_{\{ t =D\}} \in \mathcal{F}_{D-} \bigvee \sigma \{X_D\}
%\]
%hence for all $t\in \reals$, 
%\[
%\bar{X}_t \in  \mathcal{F}_{D-} \bigvee \sigma \{X_D\}
%\]
%Thus $(X_{t+D}-X_D + \alpha t)_{t \geq 0}$ is independent from $(\bar{X}_t, t \in \reals)$ and thus from $(X_t, -\infty < t \le D)$. 
\end{proof}

\section{The excursion straddling zero}
\label{S:straddle}

In this section we focus on the excursion away from the contact set that straddles the time zero; that is, the piece of the path of $X$ between the times $G$ and $D$ of Notation~\ref{N:G_D}.

The following proposition gives an explicit path decomposition for, and hence the distribution of, the process $(X_u, \, 0 \le u \le D)$.

\begin{prop}
Set
\begin{equation*}
I^{-}:=\inf \{X_u-\alpha u : u \le 0\}.
\end{equation*}
Consider the following independent random objects :
\begin{itemize}
\item
a random variable $\Gamma$ with the same distribution as $I^{-}$,
\item
$(X'_t)_{t \ge 0}$ and $(X''_t)_{t \ge 0}$ two independent copies of $(X_t)_{t \ge 0}$.
\end{itemize}
Define the process $(Z_t)_{t \ge 0}$ by
\[
Z_t :=
\begin{cases}
        X'_t, &  0 \le t \le T'_{\Gamma}, \\
         X''_{t-T'_{\Gamma}}+X'_{T'_{\Gamma}},  & T'_{\Gamma} \le t \le  T'_{\Gamma}+\tilde{T''}, \\
         \partial, &  t > T'_{\Gamma}+\tilde{T''} ,\\
\end{cases}
\]
where
\begin{equation*}
T'_{\Gamma}:=\inf \{ t \ge 0 : X'_t \wedge X'_{t-}-\alpha t \le \Gamma \}
 \end{equation*} 
and
 \begin{equation*}
 \tilde{T''}:=\inf \{ t \ge 0 : X''_t \wedge X''_{t-}+\alpha t = \inf \{ X''_u+ \alpha u : u \ge 0 \}\}.
 \end{equation*}
 Then,
 \begin{equation*}
 (X_t , \, 0 \le t \le D) \ed (Z_t , \, 0 \le t \le T'_{\Gamma}+\tilde{T''}).
 \end{equation*}
\end{prop}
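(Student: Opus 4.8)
The plan is to combine the description of the pre-$D$ path from Theorem~\ref{T:post-D} and Remark~\ref{R:S_construction} with the strong Markov property at the stopping time $S$. Recall from Remark~\ref{R:S_construction} that $S = \inf\{s > 0 : X_s \wedge X_{s-} - \alpha s \le I^-\}$ and that $D = S + \check T$, where $\check X := (X_{S+t} - X_S + \alpha t)_{t \ge 0}$ and $\check T$ is the (almost surely unique) time at which $\check X$ attains its global infimum. First I would observe that, since $X$ is a two-sided L\'evy process, the "past" $(X_t - \alpha t)_{t \le 0}$ and hence $I^-$ is independent of $(X_t)_{t \ge 0}$; moreover $I^- \ed \Gamma$ by construction. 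This lets us write $S$ as a functional of the forward path: on an independent copy $(X'_t)_{t \ge 0}$ of $(X_t)_{t \ge 0}$ and with $\Gamma$ independent of it, $S \ed T'_\Gamma := \inf\{t \ge 0 : X'_t \wedge X'_{t-} - \alpha t \le \Gamma\}$, and jointly $(X_t, \, 0 \le t \le S) \ed (X'_t, \, 0 \le t \le T'_\Gamma)$.

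Next I would apply the strong Markov property of $X$ at the stopping time $S$: the post-$S$ increment process $(X_{S+t} - X_S)_{t \ge 0}$ is independent of $\mathcal{F}_S$ (in particular of $(X_t, \, 0 \le t \le S)$ and of $I^-$) and is itself a copy of $(X_t)_{t \ge 0}$; call it $(X''_t)_{t \ge 0}$. Then for $0 \le t \le S$ we have $X_t = X'_t$ (in distribution, jointly), and for $S \le t \le D$ we have $X_t = X_S + X''_{t - S} = X'_{T'_\Gamma} + X''_{t - T'_\Gamma}$, with the range $S \le t \le D$ corresponding to $0 \le t - S \le \check T$. It remains to identify $\check T$ in terms of $X''$: since $\check X_t = X_{S+t} - X_S + \alpha t = X''_t + \alpha t$, the global infimum of $\check X$ is attained at $\check T = \inf\{t \ge 0 : X''_t \wedge X''_{t-} + \alpha t = \inf\{X''_u + \alpha u : u \ge 0\}\} = \tilde{T''}$, which is exactly the definition in the statement. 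Splicing these two pieces together along the c\`adl\`ag path (noting $X_S = X_{S-} \wedge X_S$ almost surely, as recorded in Remark~\ref{R:S_construction}, so the concatenation at the junction point is consistent), we obtain $(X_t, \, 0 \le t \le D) \ed (Z_t, \, 0 \le t \le T'_\Gamma + \tilde{T''})$.

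The step I expect to require the most care is making the "jointly in distribution" bookkeeping rigorous: one must check that the triple $(\Gamma, X', X'')$ assembled above really is mutually independent with the stated marginals, and that the pathwise identity $D = S + \check T$ transfers correctly under the distributional identification. The independence of $\Gamma$ (i.e. $I^-$) from $(X', X'')$ is immediate from the two-sided L\'evy structure and the strong Markov property at $S$; the independence of $X'$ from $X''$ is the strong Markov property; the only genuine subtlety is that $S$ is a functional of $X'$ and $\Gamma$ together, so one should phrase the argument as: condition on $(\Gamma, (X_t)_{t \ge 0})$, note $S$ is then determined, and apply the strong Markov property conditionally — equivalently, use the regenerative/splicing lemmas of Section~\ref{S:minorants} and the fact from the proof of \cite[Theorem~2.6]{zbMATH06288068} that $S$ is an $(\mathcal{F}_t)$-stopping time. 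Everything else is a routine concatenation of c\`adl\`ag paths.
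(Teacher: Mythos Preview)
Your proposal is correct and is precisely the argument the paper has in mind: the paper's own ``proof'' simply says that the decomposition follows from the construction of $S$ and $D$ in Remark~\ref{R:S_construction} and leaves the details to the reader, and those details are exactly the independence of $I^{-}$ from $(X_t)_{t\ge 0}$, the strong Markov property at the stopping time $S$, and the identification $D-S=\check T=\tilde{T''}$ that you spell out.
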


\begin{proof}
The path decomposition follows from the construction of the points $S$ and $D$ in Remark~\ref{R:S_construction}. 
The proof is left to the reader.
\end{proof}

Now that we have the distribution of the path of $X$ on $[0,D]$, let us extend it to the whole interval $[G,D]$.
First of all, we will prove that the random variable $U=\frac{-G}{D-G}$ is independent from the straddling excursion $(X_{t+G}-X_{G}, \, 0 \leq t \leq D-G)$ and has a uniform distribution on the interval $(0,1]$.

Our approach here uses ideas from \cite[Chapter~8]{zbMATH01416816} but with a modification of the particular shift operator 
considered there; 
see also \cite{MR1957884} for a framework with general shift operators that encompasses the setting we work in.  There is a large literature
in this area of general Palm theory that is surveyed in \cite{zbMATH01416816, MR1957884} but we mention \cite{MR0474493, MR941992}
as being of particular relevance.

We will prove general results for the path space $(H,\mathcal{H})$ and sequence space $(L,\mathcal{L})$ defined by 
\[ 
H :=\{ (z_t)_{t \in \reals} : z \text{ is real-valued and c\`adl\`ag with } z(0)=0 \}
\]
and
\[
L :=\{ (s_k)_{k \in \ints} \in \mathbb{R}^{\mathbb{Z}} : -\infty \leftarrow \dots <s_{-1} < 0 \leq s_0 < s_1 < \dots \rightarrow \infty \}.
\]
We take $\mathcal{H}$ to be the $\sigma$-field on $H$ that makes all of the maps $z \mapsto z_t$, $t \in \reals$, measurable, and $\mathcal{L}$ to be the trace of the product $\sigma$-field on $L$.

%generated by the counting mappings, i.e : $\mathcal{L}$ is the smallest $\sigma$-field that makes the mappings :
%\begin{align*}
%N_A : L &\rightarrow \nats \\
%(s_k)_{k \in \ints} &\mapsto \# \{ k \in \ints : s_k \in A \}
%\end{align*}
%measurable. 

For $t \in \reals$ define the {\em shift} $\theta_t: H \times L \to H \times L$ by 
\[
\theta_t((z_s)_{s \in \reals},(s_k)_{k \in \ints})=((z_{t+s}-z_t)_{s \in \reals}, (s_{n_t+k}-t)_ {k \in \ints})
\]
where $n_t=n$ if and only if $t\in [s_{n-1},s_n)$.  The family $(\theta_t)_{t \in \reals}$ is measurable in the sense that the mapping 
\[
((z_s)_{s \in \reals},(s_k)_{k \in \ints},t) \in H \times L \times \reals \mapsto \theta_t((z_s)_{s \in \reals},(s_k)_{k \in \ints}) \in H \times L
\]
 is $\mathcal{H} \otimes \mathcal{L} \otimes \mathcal{B} / \mathcal{H} \otimes \mathcal{L}$ measurable, where $\mathcal{B}$ is the Borel $\sigma$-field on $\reals$.

We consider a probability space $(\Omega, \mathcal{F}, \mathbb{P})$ equipped with a random pair $(K,P)$ that take values on $H \times L$. 
We assume furthermore that $(K,P)$ is {\em space-homogeneous stationary} in the sense that
\[
\theta_s(K,P)\ed (K,P), \text{ for all } s\in \reals.
\] 
\begin{remark}
When $\mathcal{Z}$ is discrete, the space-time regenerative system $((X_t)_{t \in \reals}, \mathcal{Z})$ is obviously space-homogeneous stationarity  due to the two facts that for any $s \in \reals$ we have $(X_{t+s}-X_s)_{t \in \reals} \ed (X_t)_{t \in \reals}$
and that the contact set for $(X_{t+s}-X_s)_{t \in \reals}$ is, by Lemma~\ref{L:space-time_homogeneous}, just
$\mathcal{Z}-s$.
\end{remark}

\begin{defn}
\begin{itemize}

\item{
Write $l_n$ for the $n^{\mathrm{th}}$ cycle length defined by $l_n=P_n-P_{n-1}$.}
\item
For $t \in \reals$, put $N_t=n$ for $t \in [P_{n-1},P_n)$.
\item
Define the relative position of $t$ in $[P_{N_t-1},P_{N_t})$ by  $ U_t :=\frac{t-P_{N_t-1}}{P_{N_t}}$.
\item
Define the random variable $(K^{\circ},P^{\circ})$ by 
\[
(K^{\circ},P^{\circ})=\theta_{P_0}(K,P)=((K_{t+P_0}-K_{P_0})_{t \in \reals}, (P_{k}-P_0)_{k \in \ints}).
\]
\end{itemize}
\end{defn}

The following are two important features of the family $(\theta_t)_{t \in \reals}$ that are useful in proving results analogous to those in  \cite[Chapter~8, Section~3]{zbMATH01416816}.

\begin{prop} 
\label{P:shift_properties}
The family of shifts $(\theta_t)_{t \in \reals}$ enjoys the two following properties.
\begin{itemize}
\item[(i)]
 The family $(\theta_t)_{t \in \reals}$ is semigroup; that is, for every $t,s  \in \reals : \theta_t \circ \theta_s=\theta_{t+s}$.
\item[(ii)]
For all $s \in \reals$ and $(K,P) \in H \times L$ we have $\theta_s(K,P)^{\circ}=\theta_{N_s}(K,P)$.
\end{itemize}
\end{prop}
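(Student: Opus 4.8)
The plan is to verify both assertions directly from the definition of $\theta_t$, treating them as elementary book-keeping about the index shift $n_t$ and the cocycle structure of the increments.

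First I would prove (i). Fix $(K,P) \in H \times L$ and write $(K',P') := \theta_s(K,P)$, so that $K'_u = K_{s+u} - K_s$ and $P'_k = P_{n_s + k} - s$, where $n_s$ is determined by $s \in [P_{n_s-1}, P_{n_s})$. Applying $\theta_t$ to $(K',P')$ requires the index $m$ with $t \in [P'_{m-1}, P'_m)$, i.e. $t \in [P_{n_s+m-1} - s, P_{n_s+m} - s)$, which is the same as $s + t \in [P_{n_s+m-1}, P_{n_s+m})$; hence $n_s + m = n_{s+t}$, so the new index offset applied to $P$ is exactly $n_{s+t}$, matching $\theta_{t+s}$. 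For the path component, $(\theta_t(K',P'))_u = K'_{t+u} - K'_t = (K_{s+t+u} - K_s) - (K_{s+t} - K_s) = K_{s+t+u} - K_{s+t}$, which is the path component of $\theta_{t+s}(K,P)$. For the sequence component, $(\theta_t(K',P'))_k = P'_{m+k} - t = P_{n_s + m + k} - s - t = P_{n_{s+t}+k} - (s+t)$, again matching $\theta_{t+s}(K,P)$. This establishes $\theta_t \circ \theta_s = \theta_{t+s}$.

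Next I would prove (ii). Let $(K^\ast, P^\ast) := \theta_s(K,P)$ with $P^\ast_k = P_{n_s+k} - s$; in particular $P^\ast_0 = P_{n_s} - s$. By definition of the $\circ$-operation applied to $(K^\ast, P^\ast)$ we have $\theta_s(K,P)^\circ = \theta_{P^\ast_0}(K^\ast, P^\ast) = \theta_{P^\ast_0} \circ \theta_s (K,P)$, and by part (i) this equals $\theta_{P^\ast_0 + s}(K,P) = \theta_{P_{n_s}}(K,P) = \theta_{N_s}(K,P)$, since $N_s = P_{n_s}$ is (by the stated notational convention, $N_t = n$ for $t \in [P_{n-1},P_n)$, read here as the corresponding point; I would match whichever convention the paper is using, replacing $P_{n_s}$ by $P_{N_s}$ throughout) exactly the first point of $P$ strictly after (or at) $s$. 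Thus (ii) reduces entirely to part (i) together with the identity $P^\ast_0 + s = P_{n_s}$.

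The only genuine subtlety — and the step I would be most careful about — is keeping the indexing conventions consistent: the definition of $n_t$ via $t \in [s_{n-1}, s_n)$, the definition of $N_t$, and whether $\theta_{N_s}$ in the statement of (ii) means shifting by the \emph{index} $N_s$ or by the \emph{point} $P_{N_s}$; the computation above shows the intended reading is shift by the point $P_{N_s}$, and I would insert one sentence at the start reconciling this with Notation so that the chain of equalities in (ii) is unambiguous. Once that is pinned down, both parts are purely formal verifications with no analytic content, so the proof is short; I would present (i) in full as above and then derive (ii) in two lines from (i).
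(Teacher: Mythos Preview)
Your proposal is correct and follows essentially the same approach as the paper: direct verification from the definitions, with (i) handled by unwinding the two-step shift on each component. The paper defers the sequence part of (i) to Thorisson while you write out the index identity $n_s + m = n_{s+t}$ explicitly, and for (ii) the paper expands $\theta_s(K,P)^\circ$ by hand whereas you derive it in one line from (i) via $\theta_{P^\ast_0}\circ\theta_s = \theta_{P^\ast_0 + s}$; your route is slightly cleaner but not materially different. You are also right to flag the notational point: the intended meaning in (ii) is indeed the shift by the point $P_{N_s}$, which is exactly what the paper's own computation shows.
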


\begin{proof}
For all $t,s \in \reals$, and $(K,P)\in H\times L$
\begin{equation*}
\begin{split}
\mathrm{Proj}_{H}[(K_t \circ \theta_s(K,P))]_u&=\theta_s(K)_{u+t}-\theta_s(K)_t\\
&=(K_{u+t+s}-K_s)-(K_{t+s}-K_{s})\\
&=K_{u+t+s}-K_{t+s}\\
&=(\theta_{t+s}(K))_u\\
&=\mathrm{Proj}_{H}[\theta_{t+s}(K,P)]_u
\end{split}
\end{equation*}
where $\mathrm{Proj}_H$ is the projection from $H\times L$ to $H$. 
The proof for the action of the shift on the sequence component is given in \cite[Chapter 8, Section 2]{zbMATH01416816}. 

We prove the (ii) in a similar manner.  We have
\begin{equation*}
\begin{split}
\theta_s(K,P)^{\circ}&=((\theta_s(K)_{t+\theta_s(P)_0}-\theta_s(K)_{\theta_s(P)_0})_{t \in \reals}, (P_{N_s+k}-P_{N_s})_
{k \in \ints})\\
&=(((K_{t+s+\theta_s(P)_0}-K_{s})-(K_{\theta_s(P)_0+s}-K_s))_{t\in \reals},(P_{N_s+k}-P_{N_s})_{k \in \ints})\\
&=((K_{t+s+P_{N_s}-s}-K_{s+P_{N_s}-s})_{t\in \reals}, (P_{N_s+k}-P_{N_s})_{k \in \ints})\\
&=((K_{t+P_{N_s}}-K_{P_{N_s}},(P_{N_s+k}-P_{N_s})_{k \in \ints})\\
&=\theta_{P_{N_s}}(K,P)
\end{split}
\end{equation*}
\end{proof}

We state now a theorem that is analogous to parts of \cite[Chapter~8, Theorem~3.1]{zbMATH01416816}.  
The proof uses the same key ideas as that result and just exploits the two properties of the family of shifts laid out in Proposition~\ref{P:shift_properties}.

\begin{thm} 
\label{T:UO_uniform_independent}
The random variable $U_0$ is uniform on $[0,1)$ and is independent of $(K^{\circ},P^{\circ})$.  
Also,
\[
\mathbb{E}\left[ \sum_{k=1}^{N_t} f(\theta_{P_k}(K,P))\right]=t\mathbb{E}\left[\frac{f(K^{\circ},P^{\circ})}{l_0}\right].
\]
\end{thm}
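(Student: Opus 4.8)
The plan is to exploit the two structural properties of the shift family $(\theta_t)_{t\in\reals}$ from Proposition~\ref{P:shift_properties} together with space-homogeneous stationarity of $(K,P)$, following the template of \cite[Chapter~8, Theorem~3.1]{zbMATH01416816}. The two claims — that $U_0$ is uniform on $[0,1)$ and independent of $(K^\circ,P^\circ)$, and the displayed Campbell-type identity — are really two facets of the same computation, so I would derive an intermediate ``master identity'' and then read off both consequences from it.

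First I would observe that, because $(\theta_t)_{t\in\reals}$ is a measurable semigroup and $\theta_s(K,P)\ed(K,P)$ for every fixed $s\in\reals$, one has the stationarity of the whole family: for any bounded measurable $g$ on $H\times L$ and any $s$, $\mathbb{E}[g(\theta_s(K,P))]=\mathbb{E}[g(K,P)]$. The key step is then to integrate such an identity over $s$ in $[0,t]$ and interchange integration and expectation (Fubini), writing $\int_0^t g(\theta_s(K,P))\,ds$ as a sum over the cycles of $P$ that meet $[0,t]$. On the cycle straddling $P_{k-1}\le s<P_k$, the change of variables $s = P_{k-1} + u\,l_k$ (equivalently $s = P_{k-1} + u\,(P_k-P_{k-1})$), using property (ii) which says $\theta_s(K,P)^\circ = \theta_{P_{N_s}}(K,P)$ and the semigroup property to relate $\theta_s$ to $\theta_{P_{k-1}}$ composed with an interior shift, should convert $\int_{P_{k-1}}^{P_k} g(\theta_s(K,P))\,ds$ into $l_k\int_0^1 g\bigl(\theta_u\,\theta_{P_{k-1}}(K,P)\bigr)\,du$ — more precisely into $l_k$ times an expression in which the ``relative position'' variable plays the role of a uniform variable and $(K^\circ,P^\circ)$ (suitably reindexed to $\theta_{P_{k-1}}(K,P)$) appears as the cycle data. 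Taking $g$ of a product form $g = h\cdot(\text{function of relative position})$ and being slightly careful with the edge cycles $P_0$ and $P_{N_t}$, the left side of the master identity becomes $\mathbb{E}\bigl[\sum_{k=1}^{N_t} (\cdot)\bigr]$ and the right side becomes $t\,\mathbb{E}[\,\cdot/l_0\,]$ after normalizing by the cycle length (the $1/l_0$ is exactly the Jacobian weight from the $s\mapsto u$ substitution, expressed after the shift moves the relevant cycle to the one indexed $0$).

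From the master identity the two assertions follow by specialization. Choosing the integrand to depend only on the cycle data gives the Campbell formula $\mathbb{E}\bigl[\sum_{k=1}^{N_t} f(\theta_{P_k}(K,P))\bigr]=t\,\mathbb{E}[f(K^\circ,P^\circ)/l_0]$ directly. For the uniformity-and-independence statement, I would instead take the integrand to be a product $\phi(\text{relative position of }s)\,\psi(\theta_s(K,P)^\circ)$; the master identity then says $\mathbb{E}[\phi(U_0)\psi(K^\circ,P^\circ)]$ equals $\bigl(\int_0^1\phi(u)\,du\bigr)\,\mathbb{E}[\psi(K^\circ,P^\circ)]$ after the same change of variables turns the $\phi$-part into a plain integral against Lebesgue measure on $[0,1)$ and the weight $1/l_0$ cancels against the cycle-length factor. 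Since this holds for all bounded measurable $\phi,\psi$, the joint law of $(U_0,(K^\circ,P^\circ))$ is the product of the uniform law on $[0,1)$ and the law of $(K^\circ,P^\circ)$, which is exactly the claim.

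The main obstacle I anticipate is purely bookkeeping: carefully handling the two boundary cycles in $[0,t]$ (the one containing $0$, whose left endpoint $P_{-1}$ is negative, and the one containing $t$, which is only partially traversed) so that the sum is genuinely over $k=1,\dots,N_t$ with the correct normalization, and making sure property (ii) is applied with the right index $N_s$ as $s$ ranges over a cycle. One also has to check the measurability needed for Fubini — but that is guaranteed by the stated joint measurability of $(z,(s_k),t)\mapsto\theta_t(z,(s_k))$. A secondary point is the half-open convention in the definition of $L$ and of $N_t$ (so that $0\le s_0$ and $s=P_{k-1}$ belongs to the $k$-th cycle), which must be respected to get $U_0$ on $[0,1)$ rather than $(0,1]$; this matches the statement as written.
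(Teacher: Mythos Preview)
Your proposal is correct and follows essentially the same approach as the paper: both prove the single identity
\[
t\,\mathbb{E}\!\left[g(U_0)\,\frac{f(K^\circ,P^\circ)}{l_0}\right]
=\left(\int_0^1 g(x)\,dx\right)\mathbb{E}\!\left[\sum_{k=1}^{N_t} f(\theta_{P_k}(K,P))\right]
\]
by writing the left side as $\int_0^t \mathbb{E}[\theta_s(\,\cdot\,)]\,ds$ via stationarity, using the shift properties $\theta_s(U_0)=U_s$, $\theta_s(l_0)=l_{N_s}$, $\theta_s(K,P)^\circ=\theta_{P_{N_s}}(K,P)$ to identify the integrand, decomposing over cycles, and reducing the $g$-integral over each full cycle to $\int_0^1 g$ by the substitution $s\mapsto U_s$. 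The boundary bookkeeping you flag as the main obstacle is dispatched in the paper by a one-line stationarity observation---the partial-cycle contributions $\int_0^{P_0}$ and $\int_t^{P_{N_t}}$ have the same expectation and hence cancel---so your anticipated difficulty dissolves cleanly.
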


\begin{proof} 	Consider a nonnegative Borel function $g$ on $[0,1)$ and a nonnegative $\mathcal{H}\otimes \mathcal{L}$-measurable function $f$. To establish both claims of the theorem, it suffices to prove that 
\begin{equation}
\label{U_0_indept}
t\mathbb{E}\left[g(U_0)\frac{f(K^{\circ},P^{\circ})}{l_0}\right]
=\left( \int_{0}^1 g(x) \, dx \right)\mathbb{E}\left[\sum_{k=1}^{N_t} f(\theta_{P_k}(K,P))\right].
\end{equation}

By stationarity, the left-hand side of  equation \eqref{U_0_indept} is 
\[
t\mathbb{E}\left[g(U_0)\frac{f(K^{\circ},P^{\circ})}{l_0}\right]
= \int_{0}^t \mathbb{E}\left[\theta_s\left[g(U_0)\frac{f(K^{\circ},P^{\circ})}{l_0}\right]\right] \, ds
\]

Because $\theta_s(U_0)=U_s$ and $\theta_s(l_0)=l_{N_s}$, and using the fact that $\theta_s(K,P)^{\circ}=\theta_{N_s}(K,P)$.
We have
\begin{equation*}
\begin{split}
t\mathbb{E}\left[g(U_0)\frac{f(K^{\circ},P^{\circ})}{X_0}\right
]&= \int_{0}^t \mathbb{E}\left[g(U_s)\frac{f(\theta_{P_{N_s}}(K,P)}{l_{N_s}}\right] \, ds\\
&=\mathbb{E}\left[ \sum_{k=1}^{N_t} \int_{P_{k-1}}^{P_k} \frac{g(U_s)f(\theta_{P_{N_s}}(K,P))}{l_{N_s}}\, ds\right] \\
& \quad +\mathbb{E} \left[ \int_{0}^{P_0}g(U_s)\frac{f(\theta_{P_{N_s}}(K,P)}{l_{N_s}}ds \right] \\
& \quad -\mathbb{E} \left[ \int_{t}^{P_{N_t}}g(U_s)\frac{f(\theta_{P_{N_s}}(K,P)}{l_{N_s}}ds \right] \\
&=\mathbb{E}\left[ \sum_{k=1}^{N_t} f(\theta_k(K,P))\int_{P_{k-1}}^{P_k}\frac{g(U_s)}{l_k}\, ds\right].\\
\end{split}
\end{equation*}
It follows from stationarity that
\[
\mathbb{E} \left[ \int_{0}^{P_0}g(U_s)\frac{f(\theta_{P_{N_s}}(K,P)}{l_{N_s}}ds \right]=\mathbb{E} \left[ \int_{t}^{P_{N_t}}g(U_s)\frac{f(\theta_{P_{N_s}}(K,P)}{l_{N_s}}ds \right].
\]
A change of variable in the integral shows that
\[
 \int_{P_{k-1}}^{P_k}\frac{g(U_s)}{l_k} \, ds=\int_{0}^{l_k} \frac{g(\frac{s}{l_k})}{l_k} \, ds
=
\int_{0}^1 g(x) \, dx, 
\]
and this proves the claim \eqref{U_0_indept}.
\end{proof}

The next result is the analogue of \cite[Chapter 8, Theorem 4.1]{zbMATH01416816}. 

\begin{cor}
\label{C:cycle_stationarity}
For any nonnegative $\mathcal{H}\otimes \mathcal{L}$- measurable function $f$ and every $n \in \ints$, we have
\[
\mathbb{E}\left[\frac{f(\theta_{P_n}(K,P))}{l_0}\right]=\mathbb{E}\left[\frac{f(K^{\circ},P^{\circ})}{l_0}\right].
\]
\end{cor}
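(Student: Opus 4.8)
The plan is to derive Corollary~\ref{C:cycle_stationarity} from Theorem~\ref{T:UO_uniform_independent} by exploiting the "mass transport"/summation identity stated there together with the semigroup structure of the shifts. The key observation is that the identity
\[
\mathbb{E}\left[ \sum_{k=1}^{N_t} f(\theta_{P_k}(K,P))\right]=t\,\mathbb{E}\left[\frac{f(K^{\circ},P^{\circ})}{l_0}\right]
\]
holds for \emph{every} nonnegative $\mathcal{H}\otimes\mathcal{L}$-measurable $f$ and every $t>0$, and that the right-hand side is linear in $t$. So first I would fix $n\in\ints$ and apply this identity to a cleverly chosen function, namely $\tilde f := f(\theta_{P_{-n}}(\cdot))/l_{-n}$ where $f$ is the given test function — more precisely, one exploits that $\theta_{P_k}\circ$ (the map sending a configuration to its $(k+n)$-th translate) relates back to $\theta_{P_{k+n}}$ by the semigroup property of Proposition~\ref{P:shift_properties}(i), together with the fact that $(P^\circ)_j = P_{N_0+j}-P_{N_0}$, and that $N_0$ is a.s.\ the index with $P_{N_0-1}\le 0<P_{N_0}$, so $P^\circ_{-1}<0\le P^\circ_0$. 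The point is that the "cycle-shift" operation $(K^\circ,P^\circ)\mapsto \theta_{P^\circ_n}(K^\circ,P^\circ)$ can be identified with $\theta_{P_n}$ (up to the already-established relation $\theta_s(K,P)^\circ=\theta_{N_s}(K,P)$), so applying the theorem's formula to the starred process and using that the starred process is again space-homogeneous stationary with the same cycle structure reduces the $n$-dependence to a telescoping/translation-invariance statement.

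A cleaner route, which I expect to be the one the authors intend: apply the summation identity of Theorem~\ref{T:UO_uniform_independent} twice, once with the test function $f$ and once with the "shifted" test function $f'(K,P):=f(\theta_{P_n}(K,P))$, noting that $N_t\circ(\text{nothing})$ — wait, more carefully — write the sum $\sum_{k=1}^{N_t} f(\theta_{P_{k+n}}(K,P)) = \sum_{j=n+1}^{N_t+n} f(\theta_{P_j}(K,P))$. Comparing this with $\sum_{j=1}^{N_t}f(\theta_{P_j}(K,P))$, the difference is a sum of at most $|n|$ boundary terms near index $0$ and near index $N_t$; dividing by $t$ and letting $t\to\infty$, these boundary contributions vanish in the Cesàro sense because each has finite expectation while the main term grows linearly in $t$. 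Since $\frac1t\mathbb{E}[\sum_{k=1}^{N_t}f(\theta_{P_{k+n}}(K,P))]$ equals $\mathbb{E}[f'(K^\circ,P^\circ)/l_0]$ by applying the theorem to $f'$, and the same quantity equals $\mathbb{E}[f(K^\circ,P^\circ)/l_0]$ up to the vanishing boundary terms, we get
\[
\mathbb{E}\left[\frac{f(\theta_{P_n}(K,P))^{\circ}/\text{stuff}}{l_0}\right]
\]
— i.e.\ $\mathbb{E}[f'(K^\circ,P^\circ)/l_0]=\mathbb{E}[f(K^\circ,P^\circ)/l_0]$. Finally one identifies $f'(K^\circ,P^\circ)=f(\theta_{P_n}(K,P)^{\circ}\text{-translate})$; using Proposition~\ref{P:shift_properties}(ii) and the semigroup law to check that $\theta_{P_n}(K,P)$ evaluated in "$\circ$"-coordinates is exactly $\theta_{(P^\circ)_n}(K^\circ,P^\circ)$, and that by stationarity of $(K^\circ,P^\circ)$ under its own cycle shifts (which follows from the $n=0$ base case being trivial) this yields the claimed equality for all $n$.

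Concretely, here is the order of steps I would carry out: (1) Apply Theorem~\ref{T:UO_uniform_independent} with test function $g\equiv 1$ to record $\mathbb{E}[\sum_{k=1}^{N_t}f(\theta_{P_k}(K,P))]=t\,\mathbb{E}[f(K^\circ,P^\circ)/l_0]$ for all nonnegative measurable $f$ and all $t>0$. (2) For fixed $n\ge 1$, write $\sum_{k=1}^{N_t}f(\theta_{P_{n+k}}(K,P))=\sum_{j=1}^{N_t}f(\theta_{P_j}(K,P))-\sum_{j=1}^{n}f(\theta_{P_j}(K,P))+\sum_{j=N_t+1}^{N_t+n}f(\theta_{P_j}(K,P))$ and take expectations; the first term is $t\,\mathbb{E}[f(K^\circ,P^\circ)/l_0]$ by step (1), and by stationarity the last two correction sums have expectations that are $O(1)$ in $t$ (each is an expectation of a sum of $n$ terms of the form $f\circ\theta_{P_j}$, whose distribution does not depend on $t$ — here one uses that $N_t+n$-shifts are distributionally controlled, e.g.\ via $\theta$-stationarity of $(K,P)$). (3) Simultaneously, apply step (1) to the function $f_n:=f\circ\theta_{P_n}$ (measurable by the joint measurability of $(\theta_t)$) to get $\mathbb{E}[\sum_{k=1}^{N_t}f_n(\theta_{P_k}(K,P))]=\mathbb{E}[\sum_{k=1}^{N_t}f(\theta_{P_k}\circ\theta_{P_k'}\cdots)]$ — using the semigroup property $\theta_{P_k}\circ\theta_{P_n}$ applied appropriately, identify $f_n(\theta_{P_k}(K,P))=f(\theta_{P_{n+k}}(K,P))$ after checking the index bookkeeping via Proposition~\ref{P:shift_properties}. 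Thus this equals $t\,\mathbb{E}[f_n(K^\circ,P^\circ)/l_0]=t\,\mathbb{E}[f(\theta_{P_n}(K,P))^\circ\text{-form}/l_0]$, and comparing with step (2), dividing by $t$, and sending $t\to\infty$ gives $\mathbb{E}[f_n(K^\circ,P^\circ)/l_0]=\mathbb{E}[f(K^\circ,P^\circ)/l_0]$. (4) Translate this back: $f_n(K^\circ,P^\circ)=f(\theta_{P_n}(K,P)\text{ recentered})=f(\theta_{P_n^\circ}(K^\circ,P^\circ))$ by the semigroup law and the relation $\theta_s(K,P)^\circ=\theta_{N_s}(K,P)$ of Proposition~\ref{P:shift_properties}(ii), and since $P_0^\circ=0$ so that $f_0=f$, the base case $n=0$ is the identity; for general $n$ one gets $\mathbb{E}[f(\theta_{P_n}(K,P))/l_0]=\mathbb{E}[f(K^\circ,P^\circ)/l_0]$, which is exactly the claim. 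The case $n<0$ is symmetric.

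\textbf{Main obstacle.} The delicate point is the bookkeeping in steps (2)–(3): carefully justifying that the boundary correction sums $\mathbb{E}[\sum_{j=1}^n f(\theta_{P_j}(K,P))]$ and $\mathbb{E}[\sum_{j=N_t+1}^{N_t+n}f(\theta_{P_j}(K,P))]$ are finite and $t$-independent (or at least $o(t)$), and verifying the exact identity $f_n\circ\theta_{P_k}=f\circ\theta_{P_{n+k}}$ as functions on $H\times L$. The latter requires combining the strict semigroup property Proposition~\ref{P:shift_properties}(i) with the re-indexing rule $n_t$ for $\theta_t$, and one must make sure the indices $P_k$ of the \emph{original} configuration and the indices $P_j$ of the \emph{shifted} configuration line up correctly — this is exactly the content of Proposition~\ref{P:shift_properties}(ii). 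If the authors prefer to avoid the limiting argument in $t$, an alternative is to differentiate the identity of step (1) in $t$ (it is linear), but the cleanest self-contained argument is the Cesàro one above. I would present it with step (1) as the engine, step (4) as the bookkeeping payoff, and flag that all measurability and finiteness are routine given the standing hypotheses.
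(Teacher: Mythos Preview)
Your ``cleaner route'' (steps (1)--(3) and the $t\to\infty$ limit) is exactly the paper's proof: apply Theorem~\ref{T:UO_uniform_independent} with $f$ replaced by $f\circ\theta_{P_n}$, reindex $\sum_{k=1}^{N_t} f(\theta_{P_{k+n}})$ as $\sum_{j=1}^{N_t} f(\theta_{P_j})$ plus two boundary sums of $n$ terms each, divide by $t$, and let $t\to\infty$. The paper disposes of your ``main obstacle'' about the boundary terms simply by first reducing to $f$ bounded by a constant $A$, so that each boundary sum has expectation at most $An$; this makes the $o(t)$ estimate trivial and avoids any appeal to stationarity of the correction terms. Your step~(4) is over-thought: once you apply the theorem to $f\circ\theta_{P_n}$, the left-hand side is already $t\,\mathbb{E}[f(\theta_{P_n}(K,P))/l_0]$ directly (using $(K^\circ,P^\circ)=\theta_{P_0}(K,P)$ and the semigroup property), so no further ``translation back'' is needed.
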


\begin{proof}
It suffices to consider the case when $f$ is bounded by a constant $A$. 
Applying Theorem~\ref{U_0_indept} with the function $f$ replaced by $f\circ \theta_{P_n}$, we have, for all $t\ge 0$,
\begin{align*}
t\mathbb{E}\left[\frac{f(\theta_{P_n}(K,P))}{l_0}\right]
&=\mathbb{E}\left[\sum_{k=1}^{N_t} f(\theta_{P_{k+n}}(K,P))\right]\\
&=\mathbb{E}\left[\sum_{k=1}^{N_t} f(\theta_{P_{k}}(K,P))\right]-\mathbb{E}\left[\sum_{k=1}^{n} f(\theta_{P_{k}}(K,P))\right]\\
&\quad+\mathbb{E}\left[\sum_{k=N_t+1}^{N_t+n} f(\theta_{P_{k}}(K,P))\right]\\
&=t\mathbb{E}\left[\frac{f(K^{\circ},P^{\circ})}{l_0}\right]-\mathbb{E}\left[\sum_{k=1}^{n} f(\theta_{P_{k}}(K,P))\right]\\
&\quad +\mathbb{E}\left[\sum_{k=N_t+1}^{N_t+n} f(\theta_{P_{k}}(K,P))\right].\\
\end{align*}
Hence
\[ 
\left\vert \mathbb{E}\left[\frac{f(\theta_{P_n}(K,P))}{l_0}\right
]-\mathbb{E}\left[\frac{f(K^{\circ},P^{\circ})}{l_0}\right] \right\vert \le \frac{A n}{t}.
\]
Letting $t \rightarrow \infty$ finishes the proof.
\end{proof}

Of particular interest to us in our L\'evy process setting is the case where the sequence 
$((K_{t+P_{n-1}} - K_{P_{n-1}}, \, 0 \le t < l_n), \, l_n)_{n \in \ints}$ is independent, in which  case the sequence
$((K_{t+P_{n-1}} - K_{P_{n-1}}, \, 0 \le t < l_n), \, l_n)_{n \ne 0}$ 
is independent and identically distributed (cf. \cite[Chapter 8, Remark 4.1]{zbMATH01416816}).
Part (i) of the following result is a straightforward consequence of Corollary~\ref{C:cycle_stationarity}.  
Part (ii) is immediate from part (i).
We omit the proofs.

\begin{cor}
\label{C:size_biasing}
Suppose that the sequence  $((K_{t+P_{n-1}} - K_{P_{n-1}}, \, 0 \le t < l_n), \, l_n)_{n \in \ints}$ is independent
\begin{itemize}
\item[(i)]
For a nonnegative measurable function $f$,
\[
\begin{split}
& \mathbb{E}\left[f((K_{t+P_{0}} - K_{P_{0}}, \, 0 \le t < l_1), \, l_1)\right] \\
& \quad =
\mathbb{E}\left[\frac{1}{l_0}\right]^{-1} 
\mathbb{E}\left[f((K_{t+P_{-1}} - K_{P_{-1}}, \, 0 \le t < l_0), \, l_0) \frac{1}{l_0}\right]. \\
\end{split}
\]
\item[(ii)]
For a nonnegative measurable function $f$,
\[
\begin{split}
& \mathbb{E}\left[f((K_{t+P_{-1}} - K_{P_{-1}}, \, 0 \le t < l_0), \, l_0) \right] \\
& \quad =
\mathbb{E}[l_1]^{-1}
\mathbb{E}\left[f((K_{t+P_{0}} - K_{P_{0}}, \, 0 \le t < l_1), \, l_1) l_1 \right]. \\
\end{split}
\]
\end{itemize}
\end{cor}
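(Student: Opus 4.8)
The plan is to deduce both formulas from Corollary~\ref{C:cycle_stationarity} by applying it to a test function that sees the configuration only through the single cycle that begins at its first nonnegative regeneration point. To fix notation, for $(k,p)\in H\times L$ and $n\in\ints$ let
\[
\xi_n(k,p):=\bigl((k_{t+p_{n-1}}-k_{p_{n-1}},\ 0\le t<p_n-p_{n-1}),\ p_n-p_{n-1}\bigr)
\]
denote the $n$-th excursion together with its length; this is an $\mathcal{H}\otimes\mathcal{L}$-measurable map into the product of path space with $(0,\infty)$, and its length coordinate evaluated at $(K,P)$ equals $l_n$. The first step is bookkeeping: because $\theta_{P_m}(K,P)$ has $K$-component $(K_{P_m+s}-K_{P_m})_{s\in\reals}$ and point sequence $(P_{m+k}-P_m)_{k\in\ints}$, one reads off immediately that $\xi_1\circ\theta_{P_{-1}}(K,P)=\xi_0(K,P)$ and that $\xi_1(K^{\circ},P^{\circ})=\xi_1\circ\theta_{P_0}(K,P)=\xi_1(K,P)$, while $l_0(K^{\circ},P^{\circ})=l_0$; in particular $l_0$ is a measurable function of $\xi_0$.

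For any nonnegative measurable $f$ on the excursion space I would then apply Corollary~\ref{C:cycle_stationarity} to the $\mathcal{H}\otimes\mathcal{L}$-measurable function $f\circ\xi_1$ with $n=-1$. By the bookkeeping identities this becomes $\mathbb{E}[f(\xi_0)/l_0]=\mathbb{E}[f(\xi_1)/l_0]$. Here the independence hypothesis of the present corollary is used in an essential way: since the sequence $(\xi_n)_{n\in\ints}$ is independent, $f(\xi_1)$ and $1/l_0$ are independent nonnegative random variables, so that
\[
\mathbb{E}\!\left[\frac{f(\xi_0)}{l_0}\right]=\mathbb{E}[f(\xi_1)]\,\mathbb{E}\!\left[\frac{1}{l_0}\right].
\]
Taking $f(\mathbf{e},\ell)=\ell$ in this identity gives $\mathbb{E}[l_1]\,\mathbb{E}[1/l_0]=1$, so in particular $\mathbb{E}[1/l_0]$ is finite and strictly positive; dividing through then yields part (i), once one recalls $\xi_0=((K_{t+P_{-1}}-K_{P_{-1}},\,0\le t<l_0),l_0)$ and $\xi_1=((K_{t+P_0}-K_{P_0},\,0\le t<l_1),l_1)$, and it also gives $\mathbb{E}[1/l_0]^{-1}=\mathbb{E}[l_1]$.

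For part (ii) I would apply part (i) with $f$ replaced by $(\mathbf{e},\ell)\mapsto\ell\,f(\mathbf{e},\ell)$; since the length coordinate of $\xi_j$ is $l_j$, this reads $\mathbb{E}[l_1 f(\xi_1)]=\mathbb{E}[1/l_0]^{-1}\,\mathbb{E}[f(\xi_0)]$, and substituting $\mathbb{E}[1/l_0]^{-1}=\mathbb{E}[l_1]$ and rearranging gives the size-biasing relation of part (ii).

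The argument is thus a matter of careful index translation together with a single genuine idea, the factorization $\mathbb{E}[f(\xi_1)/l_0]=\mathbb{E}[f(\xi_1)]\,\mathbb{E}[1/l_0]$, which is precisely where the independence hypothesis enters. I do not anticipate a real obstacle: the one point that might look delicate, namely the finiteness and positivity of $\mathbb{E}[1/l_0]$ (equivalently of $\mathbb{E}[l_1]$) needed in order to divide through, drops out for free from the identity $\mathbb{E}[l_1]\,\mathbb{E}[1/l_0]=1$.
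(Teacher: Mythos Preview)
Your proposal is correct and follows exactly the route the paper indicates: the paper omits the argument entirely, saying only that part~(i) is a straightforward consequence of Corollary~\ref{C:cycle_stationarity} and that part~(ii) is immediate from part~(i), and your write-up fills in precisely those details (apply cycle stationarity with $n=-1$ to a function depending only on one cycle, then factor using the independence hypothesis). One harmless bookkeeping caution: your formula for the point sequence of $\theta_{P_m}(K,P)$ matches the paper's displayed form of $(K^{\circ},P^{\circ})$, but tracing the shift definition literally (where $n_{P_m}=m+1$) introduces an index offset; either convention yields the same key identity $\mathbb{E}[f(\xi_0)/l_0]=\mathbb{E}[f(\xi_1)/l_0]$, so the argument goes through unchanged.
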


We return to our L\'evy process  set-up and assume that $\mathcal{Z}$ is discrete.
The pair $((X_t)_{t \in \reals}, \mathcal{Z})$ is space-homogeneous stationary and hence, by Theorem~\ref{T:UO_uniform_independent}, the random variable $U_0=\frac{-G}{D-G}$ is uniform on $[0,1)$ and independent of the process $(X_{t+D}-X_{D})_{t \in \reals}$.  Put
\[
V_t
:=
\begin{cases}
X_{D}-X_{(D-t)-}, & 0 \le t < D-G =: \zeta_V, \\
\partial, & t \ge D-G. \\
\end{cases}
\]
It is easy to check that $D-G$ is the first positive point of the contact set of the process $(X_D-X_{(D-t)-})_{t \in \reals}$ and so the process $(V_t)_{t \in \reals}$ can be written as 
\begin{equation*}
V=F(X_{D+\cdot}-X_{D}),
\end{equation*} 
where $F$ is a measurable function from the space of c\`adl\`ag functions on the real line to the space of c\`adl\`ag functions on the positive real line. Hence the random variable $U=1- U_0$ is independent of $(V_t)_{t \ge 0}$.  We have already observed that we know the distribution of the process
\[
W_t
:=
\begin{cases}
V_t, & 0 \le t < D=U(D-G) =: \zeta_W, \\
\partial, & t \ge D=U(D-G). \\
\end{cases}
\]
We now show that it is possible to derive the distribution of $V$ from that of $W$.

\begin{cor}
\label{C:extension}
Recall that  $\zeta_V$ (resp. $\zeta_W$) the lifetime of the process $(V_t)_{t \ge 0}$ (resp. $(W_t)_{t \geq 0}$).
For bounded, measurable functions $f_1, \ldots, f_n$ that take the value $0$ at $\partial$ and times
$0 \le t_1 < \ldots < t_n < t< \infty$,
\[
\begin{split}
\E[f_1(V_{t_1}) \cdots f_n(V_{t_n}) \, \ind\{t <  \zeta_V\}]
& =
\E[f_1(W_{t_1}) \cdots f_n(W_{t_n}) \, \ind\{t < \zeta_W\}] \\
& \quad +
t \frac{\E[f_1(W_{t_1}) \cdots f_n(W_{t_n}) \, \ind\{\zeta_W \in dt\}]}{dt}. \\
\end{split}
\]
\end{cor}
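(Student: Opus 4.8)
The plan is to condition on the path of $V$ and reduce the claim to an elementary one-variable identity. First I would recall that $U := 1 - U_0$ is uniformly distributed on $(0,1]$, is independent of the whole process $(V_t)_{t \ge 0}$ (in particular of $\zeta_V = D - G$), and satisfies $\zeta_W = D = U(D-G) = U\,\zeta_V$. Writing $\Phi := f_1(V_{t_1}) \cdots f_n(V_{t_n})$, I would observe that, because each $f_i$ vanishes at $\partial$ and $t_1 < \cdots < t_n < t$, one has $f_i(W_{t_i}) = f_i(V_{t_i})\,\ind\{t_i < \zeta_W\}$, hence $\prod_{i=1}^n f_i(W_{t_i}) = \Phi\,\ind\{t_n < \zeta_W\}$, while $\Phi \ne 0$ forces $t_n < \zeta_V$. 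Since $t > t_n$, this gives $\prod_i f_i(W_{t_i})\,\ind\{t < \zeta_W\} = \Phi\,\ind\{t < \zeta_W\}$ and $\prod_i f_i(W_{t_i})\,\ind\{\zeta_W \in dt\} = \Phi\,\ind\{\zeta_W \in dt\}$, so the corollary reduces to proving
\[
\E\big[\Phi\,\ind\{t < \zeta_V\}\big] = \E\big[\Phi\,\ind\{t < \zeta_W\}\big] + t\,\frac{\E[\Phi\,\ind\{\zeta_W \in dt\}]}{dt}.
\]

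Next I would condition on $(V_s)_{s \ge 0}$. Given the $V$-path, $\zeta_W = U\,\zeta_V$ is uniform on $(0,\zeta_V]$, so that
\[
\E\big[\Phi\,\ind\{t < \zeta_W\} \,\big|\, (V_s)_{s \ge 0}\big] = \Phi\,(1 - t/\zeta_V)^+, \qquad \E\big[\Phi\,g(\zeta_W) \,\big|\, (V_s)_{s \ge 0}\big] = \Phi\,\zeta_V^{-1} \int_0^{\zeta_V} g(s)\,ds
\]
for bounded measurable $g$. To make the ``$dt$'' term precise I would test against an arbitrary bounded measurable $g$ supported in $(t_n, \infty)$: combining the two displays and Fubini,
\[
\int g(t)\,\E[\Phi\,\ind\{\zeta_W \in dt\}] = \E\big[\Phi\,g(\zeta_W)\big] = \E\Big[\Phi\,\zeta_V^{-1}\int_0^{\zeta_V} g(s)\,ds\Big] = \int g(s)\,\E\big[\Phi\,\zeta_V^{-1}\,\ind\{s < \zeta_V\}\big]\,ds,
\]
which identifies $t \mapsto \E[\Phi\,\ind\{\zeta_W \le t\}]$ as absolutely continuous on $(t_n,\infty)$ with density $t \mapsto \E[\Phi\,\zeta_V^{-1}\,\ind\{t < \zeta_V\}]$; this is exactly the content hidden in the notation $\E[\Phi\,\ind\{\zeta_W \in dt\}]/dt$.

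Finally I would combine the pieces: the right-hand side of the reduced identity equals $\E[\Phi\,(1-t/\zeta_V)^+] + t\,\E[\Phi\,\zeta_V^{-1}\,\ind\{t<\zeta_V\}]$, and the pointwise identity $(1 - t/\zeta_V)^+ + t\,\zeta_V^{-1}\,\ind\{t < \zeta_V\} = \ind\{t < \zeta_V\}$ turns this into $\E[\Phi\,\ind\{t < \zeta_V\}]$, which is the left-hand side. I expect the only real obstacle to be bookkeeping rather than anything analytic: giving the ``$dt$'' a rigorous meaning as above, and carefully carrying the reductions coming from ``$f_i$ vanishes at $\partial$'' together with ``$t_n < t$'' through both terms on the right. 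Once one has conditioned on the $V$-path, the whole statement collapses to the trivial observation that a random variable uniform on $(0,\zeta_V]$ has density $\zeta_V^{-1}\,\ind_{(0,\zeta_V]}$; this is precisely the promised generalization of the elementary fact that the law of a nonnegative $V$ can be recovered from that of $UV$ with $U$ independent and uniform.
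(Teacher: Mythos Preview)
Your proof is correct and a bit more direct than the paper's. Both arguments rest on the single fact that $\zeta_W = U\zeta_V$ with $U$ uniform and independent of $V$, but they unwind it differently. The paper starts from $\E[f_1(W_{t_1})\cdots f_n(W_{t_n})\,\ind\{t<\zeta_W\}]$, integrates out $U$ and performs the substitutions $u\mapsto s=1/u$, $s\mapsto r=ts$ to reach
\[
\frac{1}{t}\,\E[f_1(W_{t_1})\cdots f_n(W_{t_n})\,\ind\{t<\zeta_W\}]
=\int_t^\infty \E[f_1(V_{t_1})\cdots f_n(V_{t_n})\,\ind\{r<\zeta_V\}]\,\frac{dr}{r^2},
\]
and then differentiates in $t$ to obtain the statement. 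You instead condition on the $V$-path, identify the conditional law of $\zeta_W$ as uniform on $(0,\zeta_V]$, read off both $W$-terms as $\E[\Phi(1-t/\zeta_V)^+]$ and $\E[\Phi\,\zeta_V^{-1}\ind\{t<\zeta_V\}]$, and close with the pointwise identity $(1-t/\zeta_V)^+ + t\,\zeta_V^{-1}\ind\{t<\zeta_V\}=\ind\{t<\zeta_V\}$. Your route avoids the changes of variable and the differentiation step and makes the meaning of the ``$dt$'' term explicit via the test-function argument; the paper's route is a compact integral-transform computation that yields the same density implicitly when one differentiates. Either way the content is the elementary inversion of $V\mapsto UV$.
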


\begin{proof}
Observe that
\[
\begin{split}
& \E[f_1(W_{t_1}) \cdots f_n(W_{t_n}) \, \ind\{t < \zeta_W\}] \\
& \quad =
\E[f_1(V_{t_1}) \cdots f_n(V_{t_n}) \, \ind\{t < U \zeta_V\}] \\
& \quad =
\int_0^1 \E[f_1(V_{t_1}) \cdots f_n(V_{t_n}) \, \ind\{t / u <  \zeta_V\}] \, du \\
& \quad =
\int_1^\infty \E[f_1(V_{t_1}) \cdots f_n(V_{t_n}) \, \ind\{t s <  \zeta_V\}] \frac{1}{s^2} \, ds \\
& \quad =
\int_t^\infty \E[f_1(V_{t_1}) \cdots f_n(V_{t_n}) \, \ind\{r <  \zeta_V\}] \frac{t^2}{r^2} \frac{1}{t} \, dr \\
& \quad =
t \int_t^\infty \E[f_1(V_{t_1}) \cdots f_n(V_{t_n}) \, \ind\{r <  \zeta_V\}] \frac{1}{r^2} \, dr, \\
\end{split}
\]
so that
\[
\int_t^\infty \E[f_1(V_{t_1}) \cdots f_n(V_{t_n}) \, \ind\{r <  \zeta_V\}] \frac{1}{r^2} \, dr
=
\frac{1}{t} \E[f_1(W_{t_1}) \cdots f_n(W_{t_n}) \, \ind\{t < \zeta_W\}].
\]
Differentiating both sides with respect to $t$ and rearranging gives the result.
\end{proof}

\begin{remark}
(i) The proof of Corollary~\ref{C:extension} is similar to that of \cite [Appendix, Proposition 3.12]{fa071d831f55425d9228e8affec59ee6} which gives an analytic link between the distributions of $Z$ and $UZ$ where $Z$ and $U$ are independent nonnegative random variables with $U$ uniform on $[0,1]$.

\noindent
(ii) We gave above a way to find the distribution of the excursion straddling zero. To determine the distribution of $(X_t, \, G \le t \le D)$ we generate the process $V$ according to the distribution described above with lifetime $\zeta_{V}$, and then take an independent random variable $U$ uniform on $[0,1]$, then we have the equality of distributions 
\begin{equation*}
  (X_t , \, G \le t \le D) \ed (V_{t+U\zeta_V}-V_{U\zeta_V}, \, -U\zeta_V \le t \le (1-U)\zeta_V).
\end{equation*}

\noindent
(iii) As a particular consequence of the Theorem~\ref{T:UO_uniform_independent}, we can find the distribution of the 
straddling excursion length $D-G$
if we know the distribution of the right-hand endpoint $D$.    See \cite[Remark 8.2]{zbMATH06288068} where the relevant calculations are carried out to find the Laplace transform  $D-G$.
\end{remark}

From now on, we distinguish between the generic excursions (that is, all the excursions that start after time $D$ or finish before $G$). These excursions are independent and identically distributed and independent of the excursion straddling zero between $G$ and $D$. In the next section we give a description of the common distribution of the generic excursions in the case of the Brownian motion with drift. 

\section{A generic excursion for Brownian motion with drift}
\label{S:Brownian_generic}

%xxx IT ISN'T CLEAR WHAT THE AIM OF THIS SECTION IS.  WE SHOULD STATE A RESULT AND THEN PROVE IT RATHER THAN HAVE SOME DISCUSSION WITH A RESULT RIGHT AT THE END.

Suppose in this section that $X$ is two-sided Brownian motion with drift $\beta$ such that $\vert \beta \vert < \alpha$; that is, $X = (B_t+\beta t)_{t \in \reals}$, where $B$ is a standard linear Brownian motion.

We recall the Williams path decomposition for Brownian motion with drift (see, for example, \cite[Chapter VI, Theorem~55.9]{rogerswills}).

\begin{thm}
\label{Williams}
Let $\mu>0$ on some probability space, take three independent random elements:
\begin{itemize}
\item
$( B^{(-\mu)}_t, \,  t \geq 0)$ a BM with drift $-\mu$;
\item
$(R^{(\mu)}_t,  t \geq 0)$ a diffusion that is solution of the following SDE 
\[ 
dR^{(\mu)}_t=dB_t+\mu \coth(\mu R^{(\mu)}_t)dt , \quad R^{(\mu)}_0=0,
 \]
 where $B$ is a standard Brownian motion;
\item
$\gamma$ an exponential r.v with rate $2\mu$.
\end{itemize}
Set $\tau=\inf\{ t  \ge 0 : B^{(-\mu)}_t=-\gamma \} $ and 
\[
H_t=
\begin{cases}
        B^{(-\mu)}_t,& 0\leq t\leq \tau, \\
        R^{(\mu)}_{t-\tau}-\gamma,& t \ge \tau.\\
\end{cases}
\]
Then, $(H_t)_{t \ge 0}$ is a Brownian motion with drift $\mu$.
\end{thm}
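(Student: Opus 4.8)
The plan is to recognise the statement as the classical Williams path decomposition of a Brownian motion with positive drift at its a.s.\ unique global minimum, read in the \emph{synthesis} direction: the recipe in the theorem reassembles a $\mathrm{BM}(\mu)$ path out of the three independent pieces into which that decomposition splits it. So I would establish the \emph{analysis} decomposition of $\mathrm{BM}(\mu)$ and then observe that the map being inverted is a measurable bijection.

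Let $X=(B_t+\mu t)_{t\ge0}$ with $\mu>0$. It is transient to $+\infty$, so $m:=\inf_{t\ge0}X_t$ is a.s.\ finite; since $0$ is regular for $(0,\infty)$ for $X$, Millar's zero--one law (the result of \cite{millarzeroone} used in the proof of Theorem~\ref{T:post-D}) shows $\{t\ge0:X_t=m\}$ is a single time $\rho$. The martingale $e^{-2\mu X_t}$, stopped at the first hitting time of $-a<0$, yields $\mathbb{P}\{m\le -a\}=e^{-2\mu a}$, so $-m$ is exponential with rate $2\mu$. By the sole theorem of \cite{millarpostmin} (invoked as in Theorem~\ref{T:post-D}), $(X_{\rho+t}-m)_{t\ge0}$ is independent of the pre-$\rho$ path and Markovian, with semigroup the Doob $h$-transform of $X$ killed at $0$ by $h(x)=\mathbb{P}^x\{X\text{ never hits }0\}=1-e^{-2\mu x}$; the $h$-transform replaces the drift $\mu$ by $\mu+h'(x)/h(x)=\mu\coth(\mu x)$, so, together with the corresponding entrance law at $0$, this piece is exactly the diffusion $R^{(\mu)}$ of the statement issued from $0$. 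Finally, conditionally on $m=-a$, the pre-$\rho$ fragment $(X_t)_{0\le t\le\rho}$ is a $\mathrm{BM}(-\mu)$ run up to its first hitting time of $-a$; this is Williams' time-reversal identity at $\rho$ (cf.\ \cite[Chapter VI]{rogerswills}), which may also be derived from a last-exit decomposition at the level $m$ together with time reversal of the conditioned diffusion.

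Granting these three facts and the conditional independence from Millar's theorem, the map sending a path transient to $+\infty$ to the pair (pre-$\rho$ fragment, post-$\rho$ fragment) is a measurable bijection onto its image: the minimum value is the infimum of the first fragment, and the path is the concatenation. Under $\mathrm{BM}(\mu)$ the triple $(-m,(X_t)_{0\le t\le\rho},(X_{\rho+t}-m)_{t\ge0})$ therefore has precisely the joint law of $(\gamma,(B^{(-\mu)}_t)_{0\le t\le\tau},(R^{(\mu)}_t)_{t\ge0})$ in the theorem, whence the glued process $H$ is a $\mathrm{BM}(\mu)$. I expect the main obstacle to be the pre-minimum identification, which genuinely requires a time-reversal argument rather than a strong-Markov one (the time $\rho$ is not a stopping time), together with the bookkeeping needed to see that the post-minimum $h$-transformed process enters from $0$ with the same entrance law as $R^{(\mu)}$.
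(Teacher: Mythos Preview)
The paper does not prove this theorem at all: it is merely \emph{recalled} with a citation to \cite[Chapter~VI, Theorem~55.9]{rogerswills} and then used as a tool in later arguments. So there is nothing in the paper to compare your proposal against beyond that reference.

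Your outline is essentially the standard route to Williams' decomposition and is sound in structure: Millar's post-minimum theorem identifies the post-$\rho$ piece as the $h$-transform with $h(x)=1-e^{-2\mu x}$, the exponential martingale gives the law of $-m$, and the synthesis direction follows from the analysis direction by bijectivity of the decomposition map. One caution: for the pre-minimum fragment you appeal to ``Williams' time-reversal identity at $\rho$ (cf.\ \cite[Chapter VI]{rogerswills})'', but that chapter is exactly where the full decomposition you are proving is stated, so as written the argument is close to circular. A self-contained version would instead time-reverse at the last-exit time from the minimum level and invoke Nagasawa-type duality for the killed diffusion (or follow Williams' original excursion-theoretic argument) to obtain the $\mathrm{BM}(-\mu)$-until-first-passage description independently. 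You correctly flag this as the main obstacle; it is also the only step in your sketch that is not already fully justified by the references you give.
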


\begin{remark}
The diffusion $R^{(\mu)}$ is called a 3-dimensional Bessel process with drift $\mu$ denoted $\mathrm{BES}(3,\mu)$. We may use a superscript to refer to the starting position of this process, when there is no superscript it implicitly means we start at zero.
This process has the same distribution as the radial part of a 3-dimensional Brownian motion with drift of magnitude $\mu$
\cite[Section 3]{zbMATH03731090}. 
This process may be thought of as a Brownian motion with drift $\mu$ conditioned to stay positive. 
\end{remark}

We give some results about Bessel processes that will be useful later in our proofs. The first result is a last exit decomposition of a Bessel process presented in \cite[Chapter~6, Proposition~3.9]{zbMATH02150787}.

\begin{prop}
\label{Bessel:last exit}
 Let $\rho$ be $\mathrm{BES}^{x}(3)$; that is, $\rho$ is a 3-dimensional Bessel process started at $x \ge 0$.
Let $T$ be a stopping time with respect to the filtration $\mathcal{F}^{\rho, J} :=(\sigma \{ \rho_s, J_s, 0 \le s \leq t \})_{t \ge 0}$, where $J_t:=\inf_{s \le t} \rho_s$ is the future infinimum of $\rho$.
 Then ($\rho_{T+t}-\rho_{T})_{t \ge 0}$ is a $\mathrm{BES}^0(3)$ that is independent of $(\rho_t, \, 0 \le t \le T)$. 
In particular if 
 \begin{equation*}
 g_{x,y}:=\sup \{ t \ge 0 : \rho_t=y \}=\inf \{ t \ge 0 : \rho_t=J_t=y\}, \quad y \ge x,
 \end{equation*}
 then $(\rho_{t+g_{x,y}}-y)_{t \ge 0}$ is a $\mathrm{BES}^0(3)$ independent of $(\rho_t, \, 0 \le t \le g_{x,y})$.
\end{prop}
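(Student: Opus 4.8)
The plan is to prove Proposition~\ref{Bessel:last exit} by reducing it to the strong Markov property of a Markov process obtained by enlarging the filtration of $\rho$ with the future infimum $J$. First I would recall the intertwining/filtering identity for $\mathrm{BES}(3)$: the pair $(\rho_t, J_t)_{t \ge 0}$ is itself a (time-homogeneous) Markov process. Indeed, $J$ is non-increasing — wait, $J_t = \inf_{s \le t}\rho_s$ is the \emph{past} infimum and is non-increasing; but the statement defines $J_t := \inf_{s \le t}\rho_s$ and calls it the future infimum, which is a slip — the relevant object for a last-exit decomposition is $\underline{\rho}_t := \inf_{s \ge t}\rho_s$, the future infimum, which is non-decreasing and satisfies $\underline{\rho}_t \le \rho_t$. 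I would proceed with $\underline\rho$. The key analytic input is that, because $\mathrm{BES}(3)$ is transient with $\rho_t \to \infty$ and hits every level in $[x,\infty)$, the future infimum process $\underline\rho$ is a nice increasing process and $(\rho_t, \underline\rho_t)_{t\ge0}$ is Markov; moreover conditionally on $\underline\rho_t = a$ the "excess" $\rho_t - a$ evolves, and at the (last) time $g_{x,y}$ that $\underline\rho$ reaches level $y$, we have $\rho_{g_{x,y}} = y$.

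The main steps in order: (1) Establish that $(\rho_t,\underline\rho_t)_{t \ge 0}$ is a strong Markov process with respect to the filtration $\mathcal F^{\rho,J}_t = \sigma\{\rho_s,\underline\rho_s : s \le t\}$; this follows from the general last-exit / $h$-transform theory (Williams, Pitman) that identifies $\rho$ conditioned on its future infimum as a time-inhomogeneous diffusion, equivalently from the fact that $\mathrm{BES}(3)$ is a Doob $h$-transform of Brownian motion by $h(x)=x$ and the future infimum of $\mathrm{BES}(3)$ started at $0$ is uniform-like with explicit law $\mathbb P\{\underline\rho_\infty > y\} $ computable from scale functions. (2) Observe that $T$ is an $\mathcal F^{\rho,J}$-stopping time and $\rho_{T+\cdot}$ given $\mathcal F^{\rho,J}_T$ depends only on $(\rho_T,\underline\rho_T)$ by the strong Markov property of the pair. (3) The crucial reduction: on the event structure determined by $\underline\rho$, the future-infimum of the shifted process $\rho_{T+\cdot}$ starts afresh, and one shows directly that $(\rho_{T+t}-\rho_T)_{t\ge0}$ is $\mathrm{BES}^0(3)$ \emph{regardless} of the value of $\rho_T$ — this uses that $\mathrm{BES}(3)$ "started from $x$ and watched from the last time its future infimum equals $x$" is spatially homogeneous in $x$, which in turn comes from the scaling/shift structure of the $h$-transform. (4) Specialize $T = g_{x,y}$: verify the two descriptions of $g_{x,y}$ coincide a.s. (the last hitting time of $y$ equals the first time the future infimum equals $y$, because after that time $\rho$ never returns below $y$), check $g_{x,y}$ is an $\mathcal F^{\rho,J}$-stopping time (it is $\inf\{t : \underline\rho_t = y\}$, measurable w.r.t.\ the future-infimum filtration), and conclude $\rho_{g_{x,y}} = y$ so the increment process is $\mathrm{BES}^0(3)$ independent of the past.

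I would cite \cite[Chapter~6, Proposition~3.9]{zbMATH02150787} for the core statement and reconstruct the proof along these lines, emphasizing the Markov property of $(\rho,\underline\rho)$ and the spatial homogeneity at last-exit times. The hard part will be step (3): making precise that the post-$T$ increment is \emph{exactly} a standard $\mathrm{BES}^0(3)$ with no residual dependence on $\rho_T$ or on the path before $T$. This is where the special structure of the $3$-dimensional Bessel process is essential — for a generic transient diffusion one only gets a Markov process after $T$, not spatial homogeneity — and the cleanest route is to invoke Pitman's representation (or the William's decomposition already recalled as Theorem~\ref{Williams} in the $\mu \downarrow 0$ limit) together with the explicit law of the future infimum, rather than to attempt a direct generator computation. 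Since the proposition is quoted verbatim from the literature, in the paper I would simply give the reference and a one-paragraph indication of the argument, deferring the routine verifications (that $g_{x,y}$ is a stopping time for $\mathcal F^{\rho,J}$ and that the two formulas for $g_{x,y}$ agree) to the reader.
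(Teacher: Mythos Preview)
The paper gives no proof of this proposition; it is quoted directly from Revuz--Yor \cite[Chapter~VI, Proposition~3.9]{zbMATH02150787}, exactly as you anticipate in your final paragraph. So there is nothing to compare your sketch against.

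Your identification of the typo is correct and worth recording: $J_t$ must be the \emph{future} infimum $\inf_{s \ge t}\rho_s$, not the past infimum $\inf_{s \le t}\rho_s$ that the displayed formula gives. With the past infimum the statement is plainly false (for $x=0$ the past infimum is identically zero, so the enlarged filtration is just $\mathcal F^\rho$, and $(\rho_{T+t}-\rho_T)_{t\ge0}$ is not $\mathrm{BES}^0(3)$ for an ordinary stopping time $T$ since $\mathrm{BES}(3)$ is not spatially homogeneous). Your outline via Pitman's representation $\rho_t = 2J_t - B_t$, $J_t = \sup_{s\le t} B_s$, so that $\mathcal F^{\rho,J}=\mathcal F^B$ and the strong Markov property of $B$ can be invoked, is exactly the standard argument.

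One point you should sharpen: the general first assertion, as written, also needs the hypothesis $\rho_T = J_T$. Indeed, writing $\tilde B_t = B_{T+t}-B_T$ and $a = S_T - B_T = \rho_T - J_T \ge 0$, one computes $\rho_{T+t}-\rho_T = 2(\tilde S_t - a)^+ - \tilde B_t$, which is $\mathrm{BES}^0(3)$ if and only if $a=0$. (For $a>0$ the process is $-\tilde B_t$ for small $t$.) In Revuz--Yor the hypothesis $\rho_T = J_T$ is stated explicitly. This condition holds automatically for $T=g_{x,y}$, since $\rho_{g_{x,y}}=y=J_{g_{x,y}}$, and that special case is the only one used in Section~\ref{S:Brownian_generic}. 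So the slip in the general statement is harmless for the paper, but your step~(3) as written---asserting the conclusion \emph{regardless} of the value of $\rho_T$---would fail without this restriction.
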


The next result relates the time-reversed Bessel process and the Brownian motion. It is from 
\cite[Chapter 7, Corollary 4.6]{zbMATH02150787}

\begin{prop}
\label{time:reversal}
Let $b>0$, $\rho$ be a $\mathrm{BES}^0(3)$, and $B$ be a standard linear Brownian motion. We have the equality of distributions
\begin{equation*}
(\rho_{L_b-t}, \, 0\le t \le L_b) \ed (b-B_t, \, 0 \le t \le T_b),
\end{equation*}
where $L_b:=\sup \{ t \ge 0: \rho_t=b \}$ is the last passage time of $\rho$ at the level $b$ and $T_b:=\inf \{ t \ge 0 : B_t=b\}$ is the first hitting time of the Brownian motion $B$ started at zero to $b$. In particular,
\[
L_b \ed T_b. 
\]
\end{prop}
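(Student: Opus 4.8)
The plan is to prove the identity in distribution $(\rho_{L_b - t}, \, 0 \le t \le L_b) \ed (b - B_t, \, 0 \le t \le T_b)$ by combining the last-exit decomposition of the three-dimensional Bessel process (Proposition~\ref{Bessel:last exit}) with the classical fact that $\mathrm{BES}^0(3)$ is a Brownian motion conditioned to stay positive, together with an $h$-transform computation. First I would recall the standard description: if $\rho$ is $\mathrm{BES}^0(3)$ started at $0$, then $\rho$ killed at its last passage time $L_b = \sup\{t \ge 0 : \rho_t = b\}$ is, after time-reversal, a Markov process whose law can be identified via Nagasawa's time-reversal formula. Concretely, reversing a strong Markov process from a last-exit time (or from a cooptional time such as $L_b$) yields a Markov process with transition densities built from the original ones by the usual reversal recipe $\hat p_t(x,y) = \frac{s(y)}{s(x)} p_t(y,x)$ relative to the reference measure, where $s$ is the relevant excessive function.

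Second, I would carry out the identification of the reversed process. The $\mathrm{BES}^0(3)$ semigroup is the $h$-transform of Brownian motion on $(0,\infty)$ killed at $0$, with $h(x) = x$; its transition density with respect to Lebesgue measure is $q_t(x,y) = \frac{y}{x}\bigl(p^0_t(x,y)\bigr)$ where $p^0_t$ is the density of Brownian motion killed at $0$. Reversing the $h$-transformed process from the last exit time of level $b$ has the effect of cancelling the $h$-factor and turning the process into plain Brownian motion run until it first hits $b$: one checks that the reversed transition densities coincide with those of Brownian motion started at $b$ and killed on hitting $b$ from below — equivalently, $b - B_t$ run until $T_b$. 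The endpoint matching is immediate: $\rho_{L_b} = b$ corresponds to $(b - B)_0 = b$ being replaced by $\rho_{L_b - L_b} = b$... wait, rather $\rho$ at the reversed-time origin is $b$ and $b - B_0 = b$, and at the terminal time $\rho_0 = 0$ matches $b - B_{T_b} = 0$. The in-particular statement $L_b \ed T_b$ then follows by reading off the lifetimes of the two processes in the path identity.

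Alternatively — and this may be cleaner to present — I would use Proposition~\ref{Bessel:last exit} directly: with $g_{x,y}$ as defined there, $L_b = g_{0,b}$, and the post-$L_b$ process detaches as an independent $\mathrm{BES}^0(3)$, so the pre-$L_b$ process is a $\mathrm{BES}^0(3)$ conditioned to have its last visit to $b$ be a global future-infimum record, i.e.\ conditioned on the future infimum after $L_b$ exceeding... no, equal to passing through $b$ as a record minimum from above. This is exactly the Bessel process ``before it last hits $b$,'' which by Williams-type time reversal for diffusions (Nagasawa) is Brownian motion from $b$ to $0$. Since the relevant time-reversal identity for transient diffusions and last-exit times is standard (it is, for instance, the content of the results in \cite[Chapter 7]{zbMATH02150787} being cited), I would invoke it rather than reprove it.

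The main obstacle I expect is making the time-reversal step rigorous at the level stated — a pathwise identity in distribution over the whole (random-length) trajectory, not just finite-dimensional marginals — while correctly tracking the reference measures and the excessive function $h(x)=x$ through the $h$-transform and the reversal, so that the two $h$-factors genuinely cancel. A secondary subtlety is handling the starting point $0$ for $\mathrm{BES}^0(3)$ (entrance law rather than an honest starting state) and checking that $L_b$ is indeed the appropriate cooptional/last-exit time for which the reversal theorem applies, and that $L_b < \infty$ a.s.\ by transience. Given that the cited reference already contains the statement, the cleanest route is to quote it, verify the hypotheses (transience of $\mathrm{BES}(3)$, the role of $h(x)=x$, finiteness of $L_b$), and then extract $L_b \ed T_b$ as a corollary of the path identity.
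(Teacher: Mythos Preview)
The paper does not prove this proposition at all: it is quoted verbatim as a known result from \cite[Chapter~7, Corollary~4.6]{zbMATH02150787} (Revuz--Yor), with no argument given. So there is no ``paper's proof'' to compare against; the authors simply cite the literature.

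Your proposal, by contrast, sketches an actual proof via Nagasawa-type time reversal and the $h$-transform description of $\mathrm{BES}(3)$ as Brownian motion conditioned to stay positive. The outline is correct and is essentially how the result is established in Revuz--Yor (or in Williams' original work): reverse the transient diffusion from the last-exit time $L_b$, use the speed measure and the scale/harmonic function $h(x)=x$ to identify the reversed transition densities, and observe that the $h$-factors cancel to leave killed Brownian motion. Your final paragraph already lands on the right conclusion: since the result is standard and explicitly in the cited reference, the cleanest route is to quote it and check the hypotheses (transience of $\mathrm{BES}(3)$ so $L_b<\infty$ a.s., and that $L_b$ is a cooptional last-exit time). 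That is exactly what the paper does, minus the hypothesis check.

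One small caution on your sketch: the middle passage where you try to deduce the pre-$L_b$ law from Proposition~\ref{Bessel:last exit} alone (``the pre-$L_b$ process is a $\mathrm{BES}^0(3)$ conditioned to have its last visit to $b$ be a global future-infimum record\ldots'') does not quite work as written---that proposition gives independence of the post-$L_b$ piece but does not by itself identify the law of the pre-$L_b$ piece; you still need the reversal formula for that. Your first and third paragraphs have the right argument; the second is a dead end you correctly abandon.
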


The final result we will need is a path decomposition of a 3-dimensional Bessel process with drift started at a positive initial state when it hits its ultimate minimum. We don't know a reference for this result, so we give its proof for the sake of completeness.

\begin{thm}
\label{Bessel:decomp}
Let $b, \mu >0$. Consider the following three independent random elements :
\begin{itemize}
\item
a random variable $g$ with density proportional to $e^{2\mu x}$ supported on $[0,b]$;
\item
a Brownian motion $(B^{(b,-\mu)}_t)_{t \ge 0}$ with drift $-\mu$ started at $b$;
\item
a 3-dimensional Bessel process $(R^{(\mu)}_t)_{t \ge 0}$ with drift $\mu$ started at zero.
\end{itemize}
where $\tilde{T}_g:=\inf \{ t \ge 0 : B^{(b,-\mu)}_t=g \}$.
\[
R^{(b,\mu)}_t=
\begin{cases}

        B^{(b,-\mu)}_t,& 0\leq t\leq \tilde{T}_g, \\
        g+R^{(\mu)}_{t-\tilde{T}_g}, & t \ge \tilde{T}_g. \\
\end{cases}
\]
Then, $R^{(b,\mu)} \ed \mathrm{BES}^b(3,\mu)$;  that is, $R^{(b,\mu)}$ is a 3-dimensional Bessel process with drift $\mu$ started at $b$.
\end{thm}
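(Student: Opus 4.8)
The statement is a Williams-type path decomposition for $\mathrm{BES}^b(3,\mu)$ at the time it attains its global minimum. The natural strategy is to transfer the Brownian-with-drift Williams decomposition (Theorem~\ref{Williams}) to the Bessel-with-drift setting via the $h$-transform that realizes $\mathrm{BES}(3,\mu)$ as Brownian motion with drift $\mu$ conditioned to stay positive. Concretely, $\mathrm{BES}^b(3,\mu)$ is the Doob $h$-transform of Brownian motion with drift $\mu$ killed at $0$, using the harmonic function $h(x)=\frac{\sinh(\mu x)}{?}$ — more precisely the scale-type function whose ratio is $\P^x\{\text{stay positive}\}$, namely $h(x) = \sinh(\mu x)e^{-\mu x}$ up to normalization (equivalently $1-e^{-2\mu x}$). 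I would first record this $h$-transform description cleanly, and also recall that the global minimum level $\min_{t\ge 0} R^{(b,\mu)}_t$ of the Bessel-with-drift process started at $b$ has an explicit distribution on $[0,b]$; under the conditioning-to-stay-positive picture this minimum is governed by the same harmonic function and one computes $\P\{\min \le a\} = \frac{1-e^{-2\mu a}}{1-e^{-2\mu b}}$ for $a\in[0,b]$, i.e. the minimum has density proportional to $e^{-2\mu a}\cdot(\text{const})$ — I need to check this matches the stated law, which has density proportional to $e^{2\mu x}$ on $[0,b]$; the sign will need care depending on whether one reverses the process.

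**Main steps.** (1) Show that, up to the hitting time of its ultimate minimum $g$, the process $R^{(b,\mu)}$ behaves like Brownian motion with drift $\mu$ started at $b$ conditioned to hit level $g$ before going above... — actually cleaner: show that $R^{(b,\mu)}$ run until it last leaves/first hits $g$ can be split using the strong Markov property at the hitting time $\sigma_g$ of level $g$, where $g$ is the overall minimum. After $\sigma_g$, by the strong Markov property the post-$\sigma_g$ process is $\mathrm{BES}^g(3,\mu)$ conditioned never to return below $g$, which (by the defining SDE being autonomous and the $h$-transform) is exactly $g + \mathrm{BES}^0(3,\mu\text{-drift})$, i.e. $g + R^{(\mu)}$ started afresh. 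This identifies the second piece. (2) Identify the law of $g$ itself: compute the distribution of the global minimum of $\mathrm{BES}^b(3,\mu)$ directly from the scale function / $h$-transform, obtaining density $\propto e^{2\mu x}$ (or $e^{-2\mu x}$ — resolve the sign) on $[0,b]$. (3) Identify the pre-$\sigma_g$ piece: conditionally on the minimum equalling $g$, the process $R^{(b,\mu)}$ on $[0,\sigma_g]$ is $\mathrm{BES}^b(3,\mu)$ conditioned to have minimum exactly $g$, and I claim this is distributed as Brownian motion with drift $-\mu$ started at $b$, run until it first hits $g$ — i.e. $B^{(b,-\mu)}$ up to $\tilde T_g$. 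This is the heart of the matter and should follow from Theorem~\ref{Williams} applied "in reverse": Brownian motion with drift $\mu$ decomposed at its minimum has a pre-minimum piece that is BM with drift $-\mu$ run to its own minimum, and the conditioning-to-stay-positive $h$-transform of that pre-minimum piece, restricted to the event that the minimum is a given positive level $g$, removes the effect of the $h$ weighting appropriately. Alternatively, one can invoke time reversal (Proposition~\ref{time:reversal} in the drift-free case has a drift analogue, or Proposition~\ref{Bessel:last exit}) to move between the descending and ascending pieces. (4) Check independence of the three ingredients, which falls out of the strong Markov property at $\sigma_g$ together with the fact that the law of $g$ is the marginal extracted in step (2).

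**Expected main obstacle.** The delicate point is step (3): rigorously justifying that the pre-minimum portion of $\mathrm{BES}^b(3,\mu)$, conditioned on the value $g$ of the minimum, is exactly Brownian motion with drift $-\mu$ started at $b$ killed on hitting $g$, with no residual $h$-transform weighting. One must be careful because the Bessel-with-drift process is an $h$-transform of drifting Brownian motion, and $h$-transforming generally distorts the law of excursion-like pieces; the reason it works out here is that conditioning on the minimum being $\{g\}$ is itself a conditioning event compatible with the harmonic function, and the Williams decomposition of the underlying drifting BM already produces a drift-$(-\mu)$ descending piece whose law, after the $h$-transform-and-condition, collapses back to plain drift $-\mu$ started at $b$. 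I would make this precise either by (a) a direct Girsanov/Cameron–Martin computation comparing the law of $(R^{(b,-\mu)})$... no — by comparing the law of the proposed process $R^{(b,\mu)}$ as constructed in the theorem with the known $h$-transform description of $\mathrm{BES}^b(3,\mu)$, verifying they agree on cylinder events via an explicit density computation involving $\sinh$ and $\coth$, or (b) by first proving the analogous Williams decomposition for $\mathrm{BES}^x(3)$ (drift-free, $\mu\to 0$ limit, where the minimum is uniform-type and the pieces are BM and BES(3)) and then applying Girsanov to add the drift $\mu$ throughout, tracking how the weight $e^{\mu(R_t - R_0) - \frac12\mu^2 t}$ reallocates among the three pieces and reproduces both the $e^{2\mu x}$ density of $g$ and the drift signs $-\mu$ and $+\mu$ of the two path fragments. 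I expect route (b) to be the cleanest, with the bookkeeping of the exponential weight across the concatenation point being the one genuinely computational step.
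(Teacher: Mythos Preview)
Your plan is not wrong, but it is substantially more laborious than necessary, and the paper's argument dissolves precisely the obstacle you flag in step~(3).

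The key observation you are missing is that because $b>0$ and $\mu>0$, the event $\{\text{Brownian motion with drift }\mu\text{ started at }b\text{ stays positive}\}$ has \emph{strictly positive} probability $1-e^{-2\mu b}$. Hence the $h$-transform realizing $\mathrm{BES}^b(3,\mu)$ is just naive elementary conditioning, not a singular Doob transform. Equivalently,
\[
\mathrm{BES}^b(3,\mu) \;\ed\; \bigl(b - \mathrm{BM}^0(-\mu)\bigr)\;\Big|\;\Bigl\{\sup_{t\ge 0}\mathrm{BM}^0(-\mu)_t \le b\Bigr\}.
\]
Now simply apply the Williams decomposition (Theorem~\ref{Williams}) to the \emph{unconditioned} $\mathrm{BM}^0(-\mu)$: its global maximum $\gamma$ is exponential with rate $2\mu$, the pre-maximum piece is $\mathrm{BM}^0(\mu)$ run until it hits $\gamma$, and the post-maximum piece is $\gamma - R^{(\mu)}$. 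Conditioning on $\{\gamma \le b\}$ affects only the law of $\gamma$, truncating it to $[0,b]$; the two path fragments, being conditionally independent given $\gamma$, are unchanged. Setting $g:=b-\gamma$ gives $g$ density proportional to $e^{-2\mu(b-g)}\propto e^{2\mu g}$ on $[0,b]$, the pre-minimum piece of $b-\mathrm{BM}^0(-\mu)$ becomes $b-\mathrm{BM}^0(\mu)=\mathrm{BM}^b(-\mu)$ run until hitting $g$, and the post-minimum piece becomes $g+R^{(\mu)}$. This is the entire proof.

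So your step~(3) --- showing that the pre-minimum piece of $\mathrm{BES}^b(3,\mu)$ given $\{\min=g\}$ is plain $\mathrm{BM}^b(-\mu)$ killed at $g$, with ``no residual $h$-transform weighting'' --- is not an obstacle at all: the $h$-transform is ordinary conditioning on an event measurable with respect to $\gamma$ alone, and Williams already factors the path into pieces conditionally independent given $\gamma$. Your proposed route~(b) via Girsanov from the drift-free case would work, but it re-derives Williams rather than simply invoking it. (Incidentally, your tentative computation $\P\{\min\le a\}=\frac{1-e^{-2\mu a}}{1-e^{-2\mu b}}$ is off; the correct expression is $\frac{e^{2\mu a}-1}{e^{2\mu b}-1}$, which indeed differentiates to a density $\propto e^{2\mu a}$ as the theorem asserts.)
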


\begin{proof}
The distribution of a 3-dimensional Bessel process with drift $\mu$ and started at $b>0$ is the conditional distribution of a Brownian motion with drift $\mu$ started at $b$ conditioned to stay positive (see the Remarks at the end of \cite[Section 3]{zbMATH03731090}). The event we condition on has a positive probability, so it is just the usual naive conditioning
\[
(b-\mathrm{BM}^0(-\mu)) \; \bigg \vert  \;  \left\{\sup_{t \ge 0} \mathrm{BM}^0(-\mu)_t \le b\right\} \ed \mathrm{BES}^b(3,\mu),
\]
where $\mathrm{BM}^0(-\mu)$ is a Brownian motion with drift $-\mu$ and started at zero. 
The theorem is then just an application of the Williams path decomposition Theorem~\ref{Williams}.
\end{proof}

Recall that in this section $(X_t)_{t \in \reals}$ is a Brownian motion with drift $\beta$. The discussion in Theorem ~\ref{T:post-D} and the Williams path decomposition Theorem~\ref{Williams} shows that $(X_{t+D}-X_{D}+\alpha t)_{t \ge 0}$ has the same distribution as $(B_t+(\alpha+\beta)_t)_{t \in \reals}$ conditioned to stay positive. Thus,
\begin{equation*}
(X_{t+D}-X_D , \, t \ge 0) = (R^{(\alpha+\beta)}_t-\alpha t, \, t \ge 0),
\end{equation*} 
where $R^{(\alpha+\beta)} \ed \text{BES}(3,\alpha+\beta)$.
We aim now to provide a path decomposition of the first positive generic excursion away from the contact set (and thus all generic excursions), that is the path of $(W_t)_{t\ge 0}:=(X_{t+D}-X_D)_{t \ge 0} = (X_{t+D_0}-X_{D_0})_{t \ge 0}$ until it hits the first contact point $D_{D_0}-D_0$. 

\begin{notation}
Using Lemma ~\ref{L:simplified recip}, let us define the following times that are the analogues of $\textbf{s}$ and $\textbf{d}$ for this generic excursion.
\begin{equation*}
\mathfrak{T}:=\inf \{t >0 : W_t-\alpha t \leq 0\}=\inf \left\{ t >0 : \frac{R^{(\alpha+\beta)}_t}{t}=2\alpha \right\}
\end{equation*}
and 
\begin{equation*}
\begin{split}
\zeta & := \inf \{ t \ge \mathfrak{T} : W_t+\alpha t =\inf \{W_u +\alpha u : u \ge \mathfrak{T} \} \} \\
& =\inf \{ t \ge \mathfrak{T} : R^{(\alpha+\beta)}_t=\inf \{R^{(\alpha+\beta)}_u : u \ge \mathfrak{T} \} \}.
\end{split}
\end{equation*}
\end{notation}

The following theorem is a path decomposition of a generic excursion away from the contact set.

\begin{thm}
\label{lipschitz:decomp}
Consider the following independent random elements:
\begin{itemize}
\item
a pair of random variables $(\tau,\hat{\gamma})$ with the joint density 
\begin{equation}
\label{joint_density}
f_{\tau,\hat{\gamma}}(t,x)=\frac{\exp \left( -\frac{(\alpha-\beta)^2t}{2}-2(\alpha+\beta)x\right)}{\sqrt{2\pi t^3}}\ind_{0 \le x \le 2\alpha t} , \quad t >0,
\end{equation}
\item
a standard Brownian excursion $\mathbf{e}$ on $[0,1]$,
\item
a linear Brownian motion $(\tilde{B}^{-(\alpha+\beta)}_t)_{t \ge 0}$ with drift $-(\alpha+\beta)$.
\end{itemize}
Define the process
\[
\mathfrak{E}_t=
\begin{cases}
        \sqrt{\tau}\mathbf{e}(\frac{t}{\tau}) +2\alpha t,&  0\leq t\leq \tau,\\
        2\alpha \tau + \tilde{B}^{-(\alpha+\beta)}_{t-\tau},&  \tau \le t \le \tau+\tilde{T}_{\hat{\gamma}}, \\
\end{cases}
\]
where $\tilde{T}_{\hat{\gamma}}:=\inf \{ t \ge 0 : \tilde{B}^{-(\alpha+\beta)}_t = -\hat{\gamma}\}$. 
Then,
\[
(X_{t+D}-X_D+\alpha t , \, 0 \le t \le \zeta ) \ed (\mathfrak{E}_t , \, 0 \le t \le \tau+\tilde{T}_{\hat{\gamma}}).
\]
\end{thm}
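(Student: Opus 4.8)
The plan is to decompose the generic excursion $(X_{t+D}-X_D+\alpha t)_{t\ge 0} = (R^{(\alpha+\beta)}_t)_{t\ge 0}$ at the time $\mathfrak{T}$ into two independent pieces, identify the law of each, and then recognize the answer in the stated form. Write $\mu := \alpha+\beta > 0$, so $R := R^{(\alpha+\beta)}$ is $\mathrm{BES}(3,\mu)$ started at $0$, and recall $\mathfrak{T} = \inf\{t>0: R_t = 2\alpha t\}$ and $\zeta = \inf\{t\ge\mathfrak{T}: R_t = \inf\{R_u: u\ge\mathfrak{T}\}\}$, i.e.\ $\zeta$ is the time at which $R$, watched from $\mathfrak{T}$ onward, attains its ultimate minimum. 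First I would argue that, conditionally on $\mathfrak{T}$ and on the value $R_{\mathfrak{T}} = 2\alpha\mathfrak{T} =: b$, the two pieces $(R_t)_{0\le t\le\mathfrak{T}}$ and $(R_{\mathfrak{T}+t})_{t\ge 0}$ are independent, the first being a Bessel-with-drift bridge-type object pinned to hit the line $2\alpha t$ for the first time at $\mathfrak{T}$, and the second being $\mathrm{BES}^b(3,\mu)$ by the strong Markov property of $R$ (here $\mathfrak{T}$ is a stopping time for the natural filtration of $R$). The joint density \eqref{joint_density} of $(\tau,\hat\gamma)$ should then be matched to the joint law of $(\mathfrak{T}, b - \inf\{R_u: u\ge\mathfrak{T}\})$, i.e.\ $\hat\gamma$ is the depth to which the post-$\mathfrak{T}$ Bessel process descends below its starting level before rising forever.

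For the post-$\mathfrak{T}$ piece, I would apply Theorem~\ref{Bessel:decomp} with $b = 2\alpha\tau$: conditionally on $\mathfrak{T}=\tau$, the process $(R_{\mathfrak{T}+t})_{t\ge 0}$ is $\mathrm{BES}^{2\alpha\tau}(3,\mu)$, which decomposes at the time $\tilde T_g$ it hits its ultimate minimum $g\in[0,2\alpha\tau]$ (with $g$ having density proportional to $e^{2\mu x}$ on $[0,2\alpha\tau]$) into a $\mathrm{BM}^{2\alpha\tau}(-\mu)$ run until $\tilde T_g$, followed by an independent $g + \mathrm{BES}(3,\mu)$. But $\zeta$ is exactly $\mathfrak{T} + \tilde T_g$, so the excursion is killed precisely at $\tilde T_g$ and the trailing $\mathrm{BES}(3,\mu)$ piece is discarded. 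Setting $\hat\gamma := 2\alpha\tau - g$, the first segment $(R_{\mathfrak{T}+t})_{0\le t\le\zeta-\mathfrak{T}}$ equals $2\alpha\tau + \tilde B^{-\mu}_t$ run until $\tilde B^{-\mu}$ first hits $-\hat\gamma$, matching the second line of the definition of $\mathfrak{E}$; and the density of $\hat\gamma = 2\alpha\tau - g$ given $\tau$ is proportional to $e^{2\mu(2\alpha\tau - x)}$, hence proportional to $e^{-2\mu x}$ on $[0,2\alpha\tau]$, which is the $\hat\gamma$-marginal structure visible in \eqref{joint_density}.

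For the pre-$\mathfrak{T}$ piece I would use the representation of $\mathrm{BES}(3,\mu)$ started at $0$ via its relation to Brownian motion (the Williams/Pitman-type absolute-continuity between $\mathrm{BES}(3,\mu)$ and $\mathrm{BES}(3,0)$ with Radon--Nikodym density involving $\exp(\mu R_t - \tfrac12\mu^2 t)/(\text{something})$, or directly the $h$-transform description) to reduce the law of $(R_t)_{0\le t\le\mathfrak{T}}$ given $\mathfrak{T}$ to that of a driftless $\mathrm{BES}(3)$ conditioned to first meet the line $2\alpha t$ at time $\mathfrak{T}$. By Williams' time-reversal / the standard fact that a $\mathrm{BES}(3)$ excursion below a first-passage line is a Brownian excursion, this conditioned path is a scaled standard Brownian excursion $\sqrt{\tau}\,\mathbf{e}(t/\tau)$ plus the deterministic line $2\alpha t$ — this is the first line of the definition of $\mathfrak{E}$. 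The density of $\mathfrak{T}$ itself (with the drift correction) is the first-passage density for $R_t/t = 2\alpha$, which after the $e^{\mu R_t - \frac12\mu^2 t}$ tilt and the substitution $R_{\mathfrak{T}} = 2\alpha\mathfrak{T}$ produces the factor $\exp(-\tfrac{(\alpha-\beta)^2 t}{2})/\sqrt{2\pi t^3}$ in \eqref{joint_density} (noting $2\alpha\mu - \tfrac12\mu^2 \cdot(\text{after combining with the Brownian-excursion normalization}) $ collapses to $-(\alpha-\beta)^2/2$, since $\mu = \alpha+\beta$). Assembling: given $\mathfrak{T}=\tau$, the pre- and post-$\mathfrak{T}$ pieces are independent, the pre-piece is $\sqrt{\tau}\mathbf{e}(\cdot/\tau)+2\alpha\cdot$, the post-piece (killed at $\zeta$) is $2\alpha\tau + \tilde B^{-\mu}$ until $\tilde T_{\hat\gamma}$, and the pair $(\tau,\hat\gamma)$ has density \eqref{joint_density}; this is exactly the asserted identity in law.

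I expect the main obstacle to be the first (pre-$\mathfrak{T}$) segment: making rigorous the claim that $(R_t)_{0\le t\le\mathfrak{T}}$, conditioned on its value and on $\mathfrak{T}$, is a scaled Brownian excursion requires care, because $\mathfrak{T}$ is a first-passage time across a \emph{line through the origin} rather than a horizontal level, and because one must simultaneously handle the $\mathrm{BES}(3,\mu)$-to-$\mathrm{BES}(3)$ change of measure and keep track of the normalizing constants so that the drift-dependent exponential $\exp(-(\alpha-\beta)^2 t/2 - 2(\alpha+\beta)x)$ in \eqref{joint_density} comes out exactly. A clean way to organize this is to first prove the $\beta=0$ case (equivalently $\mu=\alpha$, so $R$ is $\mathrm{BES}(3,\alpha)$) by a scaling/Brownian-excursion argument, then obtain general $\beta$ by Girsanov/Cameron--Martin tilting of the underlying Brownian motion, checking that the tilt acts on the two independent pieces in the product form dictated by \eqref{joint_density}.
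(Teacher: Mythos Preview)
Your overall architecture is right and matches the paper: split $R=R^{(\alpha+\beta)}$ at the stopping time $\mathfrak{T}$, use the strong Markov property, and handle the post-$\mathfrak{T}$ piece via Theorem~\ref{Bessel:decomp} exactly as you describe (with the substitution $\hat\gamma=2\alpha\tau-g$). That part is essentially identical to the paper's argument.

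The gap is in the pre-$\mathfrak{T}$ segment, and it is a missing idea rather than a missing detail. You propose to strip the drift by a Girsanov/$h$-transform change of measure, reducing to a driftless $\mathrm{BES}(3)$ conditioned to first hit the line $u\mapsto 2\alpha u$ at time $\mathfrak{T}$, and then you invoke ``the standard fact that a $\mathrm{BES}(3)$ excursion below a first-passage line is a Brownian excursion.'' But that is not a standard fact for a line \emph{through the origin}, and your proposal gives no mechanism for proving it; indeed you flag this yourself as the main obstacle. The paper bypasses both difficulties---the drift and the oblique boundary---in one stroke by the time-inversion $Y_t:=t\,R^{(\alpha+\beta)}_{1/t}$. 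Because $R^{(\alpha+\beta)}$ is the radial part of a three-dimensional Brownian motion with drift of magnitude $\alpha+\beta$, time-inversion turns it into a \emph{driftless} $\mathrm{BES}^{\alpha+\beta}(3)$, and simultaneously turns the first passage of $R$ across the ray $2\alpha u$ into the \emph{last} passage of $Y$ through the \emph{horizontal} level $2\alpha$, i.e.\ $\mathfrak{T}=1/g_{\alpha+\beta,2\alpha}$. Proposition~\ref{Bessel:last exit} then gives that $(Y_{t+1/\mathfrak{T}}-2\alpha)_{t\ge0}$ is an independent $\mathrm{BES}^0(3)$, and undoing the inversion shows $(R_u)_{0\le u\le T}=(u\,G_{(T-u)/(uT)}+2\alpha u)$, which is the classical transformation of a $\mathrm{BES}^0(3)$ into a Bessel bridge, i.e.\ a Brownian excursion of length $T$. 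The density of $\mathfrak{T}$ then drops out from known Laplace transforms of $L_{2\alpha}$ and $H_{\alpha+\beta}$ via $L_{2\alpha}\overset{d}{=}H_{\alpha+\beta}+g_{\alpha+\beta,2\alpha}$, rather than from any measure-tilting bookkeeping.

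In short: your Girsanov plan removes the drift but leaves you with exactly the hard geometric problem (first passage across a ray), for which you would end up needing time-inversion anyway. Replace the change-of-measure step by the inversion $Y_t=tR_{1/t}$ and the rest of your outline goes through cleanly.
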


\begin{proof}
Let us first find first the distribution of the path of $R^{(\alpha+\beta)}$ on $[0,\mathfrak{T}]$. As $\mathfrak{T}$ is a stopping time (with respect to the filtration generated by $R^{(\alpha+\beta)}$) and $R^{(\alpha+\beta)}$ is a time-homogenous strong Markov process, conditioning on the value of $\mathfrak{T}$ (and thus on $R^{(\alpha+\beta)}_{\mathfrak{T}}=2\alpha \mathfrak{T}$ is enough to have the independence between the two components of our path).  
Define $(Y_t)_{t > 0}$ by
\begin{equation*}
Y_t:=tR^{(\alpha+\beta)}_{\frac{1}{t}}, \quad t > 0.
\end{equation*}
By the time-inversion property of  Brownian motion, $Y$ is a $\mathrm{BES}^{\alpha+\beta}(3)$; that is, $Y$ is a $3$-dimensional Bessel process started at $\alpha+\beta$ (with no drift).  
The stopping time $\mathfrak{T}$ can be expressed as
\begin{equation}
\label{last time}
\mathfrak{T}=\frac{1}{\sup \{ t \ge 0 : Y_t=2\alpha \}} \ed \frac{1}{g_{\alpha+\beta,2\alpha}}
\end{equation}

Hence by applying Proposition ~\ref{Bessel:last exit}
to our process $Y$ we find that
\begin{equation*}
(G_t, \, t \ge 0):=(Y_{t+\frac{1}{\mathfrak{T}}}-2\alpha , \, t \ge 0)
\end{equation*}
is a $\mathrm{BES}^0(3)$ independent from $\sigma \{ Y_u : u \le \frac{1}{\mathfrak{T}} \} = \sigma \{ R^{(\alpha+\beta)}_u :u \ge \mathfrak{T} \}$. Now, conditionally on $\{\mathfrak{T} = T\}$, we have :
\begin{align*}
(R^{(\alpha+\beta)}_u, \, 0\le u \le T)&=(u(G_{\frac{1}{u}-\frac{1}{T}}+2\alpha), \, 0 \le u \le T)\\
&=(uG_{\frac{T-u}{uT}}+2\alpha u , \, 0 \le u \le T).
\end{align*}
However, it is known that $(uG_{\frac{T-u}{uT}}, \, 0 \le u \le T)$ is just a Brownian excursion of length $T$ (that is a 3-dimensional Bessel bridge between $(0,0)$ and $(T,0)$).  This can easily be seen from the same time transformation that maps Brownian motions to Brownian bridges). For a reference to this path transformation,  see [page 226]\cite{MR733673}. Hence, given $\{\mathfrak{T} = T\}$, 
\begin{equation*}
(W_u , \, 0 \le u \le T) = (\mathbf{e}_T(u) + \alpha u , 0 \le u \le T) \ed \left(\sqrt{T}\mathbf{e}\left(\frac{u}{T}\right)+\alpha u , \, 0 \le u \le T\right),
\end{equation*}
where $\mathbf{e}_T$ is a Brownian excursion on $[0,T]$, and $\mathbf{e}$ is a standard Brownian excursion on $[0,1]$ obtained by Brownian scaling.  \\

%The law of $\tau$ can also be made explicit from the description in terms of $Y$. $\tau$ is the inverse of the last passage time to the level $2\alpha$ of a Bessel process started at $\alpha+\beta$. 
%Hence the law of $L_b$ is the same as the law $T_b$. Also lets call $H_c$ the first hitting time of the process $Y$ started at zero of a level $c$, then by the strong Markov property we have that :
%\begin{equation*}
%L_{2\alpha} \ed H_{\alpha+\beta}+\tau^{-1} \ed T_{2\alpha}
%\end{equation*}
%where $H_{\alpha}$ and $\tau$ are independent. Thus the distribution of $\tau$ can be found explicitly by inverting its known Laplace transform. I trust there is a simpler way to do it with some manipulations of the time reversal related to the BM I stated above. 

Now lets move to the second fragment of our path; that is, the process $W$ on $[\mathfrak{T},\zeta]$. Because of the fact that $\mathfrak{T}$ is a stopping time and $R^{(\alpha+\beta)}$ is a strong Markov process, conditionally on $\{\mathfrak{T}=T\}$,  the process $(R^{(\alpha+\beta)}_{t+\mathfrak{T}}, \, 0 \le t \le \zeta-\mathfrak{T})$ is just a $\mathrm{BES}^{2\alpha T}(3,\alpha+\beta)$ stopped at the time it hits its ultimate minimum. Hence, by applying Theorem \ref{Bessel:decomp},
\begin{equation*}
(R^{(\alpha+\beta)}_{t+\mathfrak{T}}, \, 0 \le t \le \zeta-\mathfrak{T}) \ed (\tilde{B}^{(2\alpha T,-(\alpha+\beta))}_t , \, 0 \le t \le \tilde{T}_{\gamma}),
\end{equation*}
where $\tilde{B}^{(2\alpha T, -(\alpha+\beta))}$ is a standard Brownian motion with drift $-(\alpha+\beta)$ started at $2\alpha T$ and $\tilde{T}_{\gamma}:=\inf \{ t \ge 0: \tilde{B}^{(2\alpha T, -(\alpha+\beta))}_t=\gamma \}$, and $\gamma$ is independent of $\tilde{B}^{(2\alpha T,-(\alpha+\beta))}$ with density on $[0,2\alpha T]$ proportional to $x \mapsto e^{2(\alpha+\beta)x}$. 
Finally by setting $\hat{\gamma}=2\alpha \mathfrak{T}-\gamma$, it suffices to prove that $(\mathfrak{T},\hat{\gamma})$ has the joint density in \eqref{joint_density} to finish our proof.

We know that the conditional density of $\hat{\gamma}$ given $\{\mathfrak{T}=t\}$ is proportional to $x \mapsto e^{-2(\alpha+\beta) x}$ restricted to $[0,2\alpha t]$. That is,
\begin{equation}
\label{density:gamma}
f_{\hat{\gamma} \vert \mathfrak{T}=t }(x)=\frac{2(\alpha+\beta)e^{-2(\alpha+\beta)x}}{1-e^{-4(\alpha+\beta)\alpha t}}\ind_{ 0 \le x \le 2\alpha t}.
\end{equation}
To finish, let us find the distribution of $\mathfrak{T}$. Recall from \eqref{last time}
that we have
\[
\mathfrak{T} \ed \frac{1}{g_{\alpha+\beta,2\alpha}}.
\]
Now $g_{\alpha+\beta,2\alpha}$ is the last time a 3-dimensional Bessel process started at $\alpha+\beta$ visits the state $2\alpha$. Consider $(\tilde{Y}_t)_{t \ge 0}$ a $\mathrm{BES}^0(3)$, and let $H_{\alpha+\beta}:=\inf \{ t \ge 0 : \tilde{Y}_t=\alpha +\beta\}$ be the first hitting time of $\alpha +\beta$. Then, by the strong Markov property at time $H_{\alpha+\beta}$, we have
\[
L_{2\alpha} \ed H_{\alpha+\beta} +g_{\alpha+\beta,2\alpha},
\]
where $g_{\alpha+\beta,2\alpha}$ and $H_{\alpha+\beta}$ are independent, and $L_{2\alpha}$ is the last time $\tilde{Y}$ visits $2\alpha$. Hence we get the Laplace transform of $g_{\alpha+\beta,2\alpha}$ is
\[
\mathbb{E}[\exp(-\lambda g_{\alpha+\beta,2\alpha})]=\frac{\mathbb{E}[\exp( -\lambda L_{2\alpha})]}{\mathbb{E}[\exp(-\lambda H_{\alpha+\beta})]}.
\]
Using Proposition~\ref{time:reversal}, we know with the same notation that $L_{2\alpha} \ed T_{2\alpha}$. Thus,
\[
\mathbb{E}[\exp( -\lambda T_{2\alpha})]=\exp(-2\alpha \sqrt{2\lambda}).
\]
On the other hand, we obtain the Laplace transform of $H_{\alpha+\beta}$ from \cite[equation 2.1.4, p463]{handbook}, namely,
\[
\mathbb{E}[\exp( -\lambda H_{\alpha+\beta})]=\frac{(\alpha+\beta)\sqrt{2\lambda}}{\sinh((\alpha+\beta)\sqrt{2\lambda})}.
\]
Thus,
\[
\mathbb{E}[\exp(-\lambda g_{\alpha+\beta,2\alpha})]=\frac{e^{-2\alpha\sqrt{2\lambda}}\sinh((\alpha+\beta)\sqrt{2\lambda})}{(\alpha+\beta)\sqrt{2\lambda}}.
\]
Inverting this Laplace transform, we get the density of $g_{\alpha+\beta,2\alpha}$; that is,
\[
f_{g_{\alpha+\beta,2\alpha}}(t)=\frac{e^{-\frac{(\alpha-\beta)^2}{2t}}-e^{-\frac{(3\alpha+\beta)^2}{2t}}}{2(\alpha+\beta)\sqrt{2\pi t}}.
\]
The density of $\mathfrak{T}$ is thus
\begin{equation}
\label{density:t}
f_{\mathfrak{T}}(t)=\frac{1}{t^2}f_{g_{\alpha+\beta,2\alpha}}\left(\frac{1}{t}\right)=\frac{e^{-\frac{(\alpha-\beta)^2t}{2}}-e^{-\frac{(3\alpha+\beta)^2t}{2}}}{2(\alpha+\beta)\sqrt{2\pi t^3}}\ind_{t>0}.
\end{equation}
Multiplying  the \eqref{density:t} and \eqref{density:gamma} gives the desired equality.
\end{proof}

Now we have an explicit path decomposition of a generic excursion and we know the expression of the $\alpha$-Lipschitz minorant on the same interval in terms of the locations of the excursion at its end-points using Lemma~\ref{sawtooth}.  It is interesting to identify the distributions of the most important features such as:
\begin{itemize}
\item
the lifetime $\zeta$  of the excursion;
\item
the time $L$ at which the $\alpha$-Lipschitz minorant of the excursion attains its maximal value;
\item
the final value $W_{\zeta}$ of the excursion
\end{itemize}
--- see Figure~\ref{F:excursion}.

\begin{figure}
  \centering
      \includegraphics[width=1.0\textwidth]{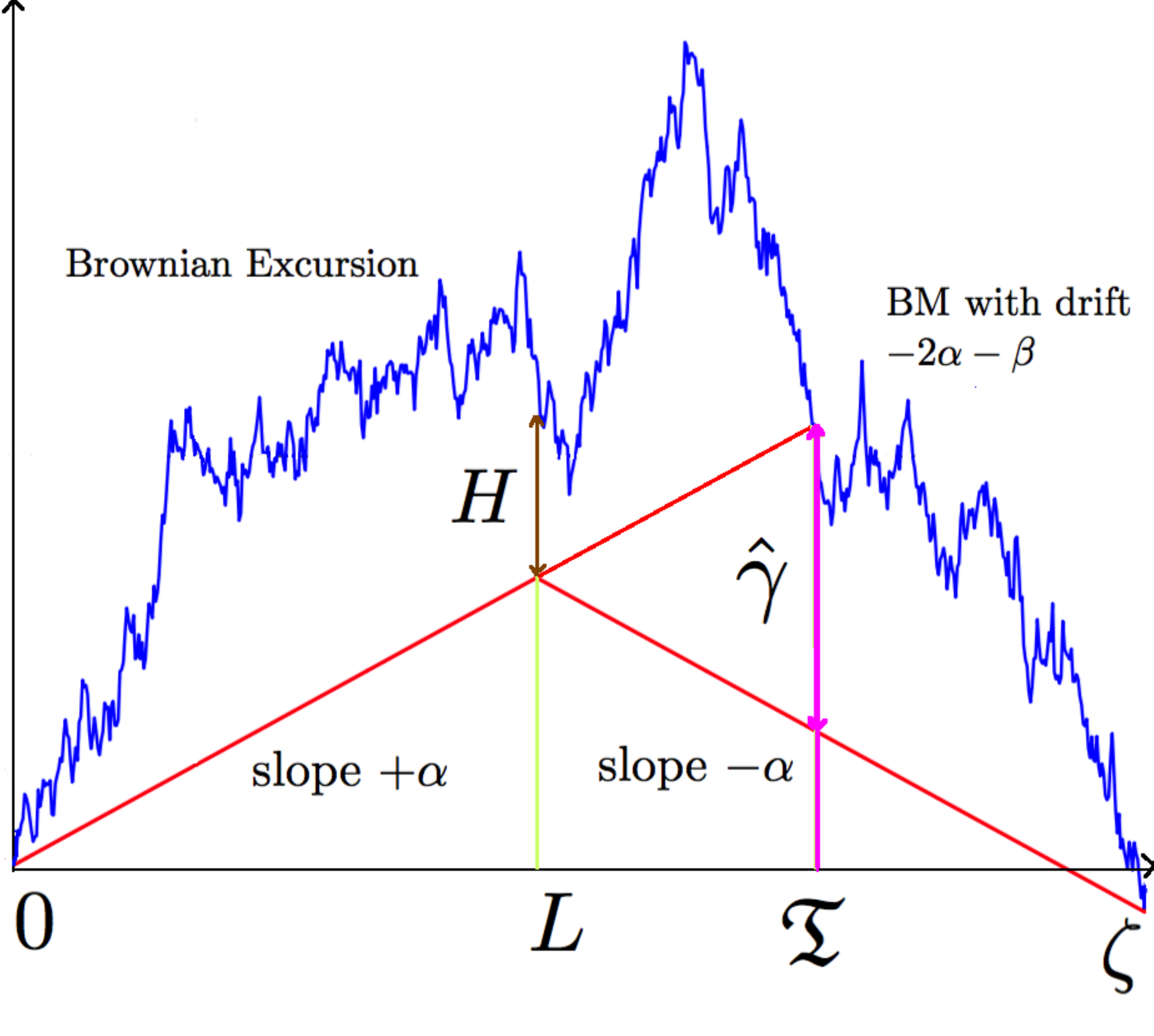}
  \caption{A generic excursion away from the contact set.}
\label{F:excursion}
\end{figure}

Using the notation 
from Theorem ~\ref{lipschitz:decomp} and from Lemma \ref{sawtooth} we have the following expressions
\[
\begin{split}
\zeta& =\tau+\tilde{T}_{\hat{\gamma}}, \\
L&=\tau-\frac{\hat{\gamma}}{2\alpha}, \\
\zeta - L&= \tilde{T}_{\hat{\gamma}} + \frac{\hat{\gamma}}{2\alpha}, \\
W_{\zeta} &=\alpha\left(\tau-\tilde{T}_{\hat{\gamma}}-\frac{\hat{\gamma}}{\alpha}\right). \\
\end{split}
\]

\begin{prop}
\label{laplace}
\begin{itemize}
\item[(i)]
The joint Laplace transform of $(\zeta, L, \zeta - L, W_\zeta)$ is
\[
\begin{split}
& \mathbb{E}[\exp(-(\rho_1 \zeta + \rho_2 L + \rho_3(\zeta - L) + \rho_4 W_\zeta))] \\
& \quad = \frac{4 \alpha}
{2 \alpha 
+ \sqrt{2(\rho_1 + \rho_3 - \alpha \rho_4) + (\alpha + \beta)^2}
+ \sqrt{2(\rho_1 + \rho_2 + \alpha \rho_4) + (\alpha - \beta)^2}}.
\end{split}
\]
\item[(ii)]
The Laplace transform of the excursion length $\zeta$ is
\[
\mathbb{E}[\exp( -\lambda \zeta)]=\frac{4\alpha}{2\alpha+\sqrt{2\lambda +(\alpha+\beta)^2}+\sqrt{2\lambda +(\alpha-\beta)^2}}.
\]
In particular, for $\beta=0$ the probability density of $\zeta$ is
\[
l \mapsto 2\alpha \frac{e^{-\frac{\alpha^2 l}{2}}}{\sqrt{2\pi l}}-2\alpha^2 \overline{\Phi}(\alpha \sqrt{l})
\]
where $\overline{\Phi}(x):=\int_{x}^{\infty} \frac{e^{-u^2/2}}{\sqrt{2\pi}} \, du$.
\item[(iii)]
The Laplace transform of the time $L$ to the peak of the minorant during the excursion is
\[
\E[\exp(-\lambda L)]=\frac{4\alpha}{3\alpha+\beta+\sqrt{2\lambda+(\alpha-\beta)^2}}
\]
The corresponding density is
\[
l \mapsto 4\alpha\frac{e^{-\frac{(\alpha-\beta)^2 l}{2}}}{\sqrt{2\pi l}}-4\alpha(3\alpha+\beta)e^{4\alpha(\alpha+\beta)l}\overline{\Phi}(\sqrt{l}(3\alpha+\beta).
\]
\item[(iv)]
The Laplace transform of the time $\zeta - L$ after the peak of the minorant during the excursion is
\[
\E[\exp(-\lambda (\zeta-L))]=\frac{4\alpha}{3\alpha-\beta+\sqrt{2\lambda+(\alpha+\beta)^2}}
\]
The corresponding density is
\[
l \mapsto 4\alpha\frac{e^{-\frac{(\alpha+\beta)^2 l}{2}}}{\sqrt{2\pi l}}-4\alpha(3\alpha-\beta)e^{4\alpha(\alpha-\beta)l}\overline{\Phi}(\sqrt{l}(3\alpha-\beta).
\]
\item[(v)] The Laplace transform of $W_{\zeta}$, the final value of the excursion, is
\[
\mathbb{E}[\exp( -\lambda W_{\zeta})]=
\frac{4\alpha}
{2 \alpha
+ \sqrt{(\alpha+\beta)^2 - 2\lambda \alpha}
+ \sqrt{(\alpha-\beta)^2 + 2\lambda \alpha}}.
\]
\end{itemize}
\end{prop}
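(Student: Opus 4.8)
The plan is to derive part (i) directly from the path decomposition in Theorem~\ref{lipschitz:decomp} together with the displayed identities
\[
\zeta = \tau + \tilde{T}_{\hat{\gamma}}, \quad L = \tau - \tfrac{\hat{\gamma}}{2\alpha}, \quad \zeta - L = \tilde{T}_{\hat{\gamma}} + \tfrac{\hat{\gamma}}{2\alpha}, \quad W_\zeta = \alpha\tau - \alpha\tilde{T}_{\hat{\gamma}} - \hat{\gamma},
\]
and then to obtain (ii)--(v) as specializations, inverting the resulting one-dimensional Laplace transforms wherever a density is asserted. First I would substitute these four identities into $\rho_1\zeta + \rho_2 L + \rho_3(\zeta - L) + \rho_4 W_\zeta$ and collect the coefficients of the three independent building blocks $\tau$, $\tilde{T}_{\hat{\gamma}}$ and $\hat{\gamma}$; writing $a := \rho_1 + \rho_2 + \alpha\rho_4$, $b := \rho_1 + \rho_3 - \alpha\rho_4$ and $c := \tfrac{\rho_3 - \rho_2}{2\alpha} - \rho_4$, the exponent becomes $-(a\tau + b\tilde{T}_{\hat{\gamma}} + c\hat{\gamma})$. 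Since, conditionally on $(\tau,\hat{\gamma})$, the time $\tilde{T}_{\hat{\gamma}}$ is the first passage time of a Brownian motion with drift $-(\alpha+\beta)$ to the level $-\hat{\gamma}$, the standard hitting-time formula $\mathbb{E}[e^{-b\tilde{T}_{\hat{\gamma}}}\mid\hat{\gamma}] = \exp(-\hat{\gamma}(\sqrt{(\alpha+\beta)^2 + 2b} - (\alpha+\beta)))$ reduces the problem to computing $\mathbb{E}[e^{-a\tau - d\hat{\gamma}}]$ with $d := c + \sqrt{(\alpha+\beta)^2 + 2b} - (\alpha+\beta)$, an integral against the explicit joint density \eqref{joint_density}.

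Next I would carry out that integral. Integrating first in $x = \hat{\gamma}$ over $[0,2\alpha t]$ produces the factor $\tfrac{1 - e^{-2\alpha e_1 t}}{e_1}$ with $e_1 := d + 2(\alpha+\beta)$, and the remaining $t$-integral is $\tfrac{1}{e_1}\int_0^\infty \tfrac{e^{-pt}(1 - e^{-2\alpha e_1 t})}{\sqrt{2\pi t^3}}\,dt$ with $p := a + \tfrac{(\alpha-\beta)^2}{2}$. Rather than split this (each piece diverges at $t = 0$), I would use $1 - e^{-2\alpha e_1 t} = \int_0^{2\alpha e_1} t e^{-st}\,ds$, interchange the order of integration, and apply $\int_0^\infty t^{-1/2}e^{-rt}\,dt = \sqrt{\pi/r}$ to obtain $\tfrac{\sqrt{2}}{e_1}(\sqrt{p + 2\alpha e_1} - \sqrt{p})$; rationalizing the numerator cancels the factor $e_1$ against $2\alpha$ and yields $\tfrac{2\sqrt{2}\,\alpha}{\sqrt{p + 2\alpha e_1} + \sqrt{p}}$. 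The one genuinely fiddly point is the simplification: setting $R_+ := \sqrt{2(\rho_1 + \rho_3 - \alpha\rho_4) + (\alpha+\beta)^2}$ and $R_- := \sqrt{2(\rho_1 + \rho_2 + \alpha\rho_4) + (\alpha-\beta)^2}$, one verifies the algebraic identities $2p = R_-^2$ and $2(p + 2\alpha e_1) = (R_+ + 2\alpha)^2$ — the latter by expanding and using $(\alpha-\beta)^2 + 4\alpha(\alpha+\beta) = (\alpha+\beta)^2 + 4\alpha^2$ — so that the expression collapses to $\tfrac{4\alpha}{2\alpha + R_+ + R_-}$, which is the formula in (i). I expect this bookkeeping, together with keeping track of the region of the $\rho_i$ for which all the square roots are real (so that the $W_\zeta$ terms, in which $-\alpha\rho_4$ appears under a root, are handled by analytic continuation of an identity valid for small parameters), to be the main obstacle.

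Finally, parts (ii)--(v) follow from (i) by setting exactly one $\rho_i$ equal to $\lambda$ and the others to zero, using $\sqrt{(\alpha\pm\beta)^2} = \alpha\pm\beta$ (valid because $|\beta| < \alpha$) to collapse one of the two roots in (iii) and (iv). For the asserted densities I would invert the resulting transforms using the two elementary identities
\[
\int_0^\infty e^{-\lambda l}\,\frac{e^{-cl}}{\sqrt{2\pi l}}\,dl = \frac{1}{\sqrt{2(\lambda + c)}}
\qquad\text{and}\qquad
\int_0^\infty e^{-\lambda l}\,e^{\kappa l}\,\overline{\Phi}(m\sqrt{l})\,dl = \frac{1}{2(\lambda - \kappa)}\left(1 - \frac{m}{\sqrt{m^2 + 2(\lambda - \kappa)}}\right),
\]
the second following from $\overline{\Phi}(m\sqrt{l}) = \tfrac12\,\mathbb{P}\{Z^2/m^2 > l\}$ for $Z$ standard normal; rationalizing $\tfrac{4\alpha}{A + \sqrt{2\lambda + B}}$ as $\tfrac{2\alpha(\sqrt{2\lambda + B} - A)}{\lambda + (B - A^2)/2}$ and matching it against the appropriate linear combination of these two transforms (with $A = 2\alpha$, $B = \alpha^2$ in (ii) with $\beta = 0$; $A = 3\alpha+\beta$, $B = (\alpha-\beta)^2$ in (iii); $A = 3\alpha-\beta$, $B = (\alpha+\beta)^2$ in (iv)) gives the stated density formulas. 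For (ii) with general $\beta$, and for (v), the transform is a genuine sum of two distinct square roots and no closed-form density is claimed, so those parts require nothing beyond the specialization of (i).
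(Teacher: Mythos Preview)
Your proposal is correct and follows essentially the same route as the paper: substitute the expressions for $\zeta$, $L$, $\zeta-L$, $W_\zeta$ in terms of $(\tau,\hat\gamma,\tilde T_{\hat\gamma})$, condition on $(\tau,\hat\gamma)$ to use the first-passage Laplace transform of Lemma~\ref{hitting:time}, and then evaluate the remaining $t$-integral via exactly the trick that constitutes Lemma~\ref{integral}. The only cosmetic differences are that the paper first records the intermediate three-variable transform \eqref{three_variable} and then illustrates the computation only in the special case of $\zeta$ (leaving the general case implicit), whereas you carry out part~(i) in full and also supply the explicit Laplace-inversion identities that the paper dismisses as ``standard inversion formulas.''
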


We give the proof of Proposition~\ref{laplace} below after some preparatory results.
We first recall a result about the distribution of the first hitting time of a Brownian motion with drift. \cite[equations 2.0.1 \& 2.0.2, page 295]{handbook}

\begin{lem}
\label{hitting:time}
Let ($B^{(\mu)}_t)_{t \ge 0}$ a Brownian motion with drift $\mu>0$ started at zero. Let $y>0$ and define $T_{\mu,y}:=\inf \{ t \ge 0: B^{(\mu)}_t=y \}$.
The density function of $T_{\mu,y}$ is
\[
f_{T_{\mu,y}}(t)=\frac{y}{\sqrt{2\pi t^3}} \exp\left(-\frac{(y-\mu t)^2}{2t}\right)
\]
and its Laplace transform is
\[
\mathbb{E}[\exp(-\lambda T_{\mu,y})]=e^{-y(\sqrt{2\lambda + \mu^2}-\mu)}.
\]
\end{lem}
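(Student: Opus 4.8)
The plan is to reduce to standard Brownian motion via a Cameron--Martin--Girsanov change of measure, for which both formulas are classical consequences of the reflection principle and an exponential martingale; the cited handbook entry merely records the result.

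First I would set up the two ingredients. Let $W$ be a standard linear Brownian motion started at zero under a measure $\mathbb{P}^0$, and let $\mathcal{F}_t := \sigma\{W_s : s \le t\}$. The reflection principle gives that $T_y := \inf\{t \ge 0 : W_t = y\}$ is almost surely finite with $\mathbb{P}^0\{T_y \in dt\} = \frac{y}{\sqrt{2\pi t^3}} e^{-y^2/(2t)} \, dt$ on $(0,\infty)$, and that $W_{T_y} = y$ on $\{T_y < \infty\}$. Defining $\mathbb{P}^\mu$ on each $\mathcal{F}_t$ by $\frac{d\mathbb{P}^\mu}{d\mathbb{P}^0}\big|_{\mathcal{F}_t} = \exp(\mu W_t - \tfrac12 \mu^2 t)$, Girsanov's theorem shows that under $\mathbb{P}^\mu$ the process $B^{(\mu)} := W$ is a Brownian motion with drift $\mu$ started at zero; since a stopping time does not depend on the measure, $T_{\mu,y}$ is exactly $T_y$ considered under $\mathbb{P}^\mu$.

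For the density, apply optional stopping to the Dol\'eans martingale $\exp(\mu W_{t} - \tfrac12 \mu^2 t)$ at $T_y$: for Borel $A \subseteq (0,\infty)$,
\[
\mathbb{P}^\mu\{T_{\mu,y} \in A\} = \mathbb{E}^{0}\!\left[\exp\!\left(\mu W_{T_y} - \tfrac12 \mu^2 T_y\right)\ind\{T_y \in A\}\right] = e^{\mu y}\,\mathbb{E}^{0}\!\left[e^{-\frac12 \mu^2 T_y}\ind\{T_y \in A\}\right].
\]
Substituting the reflection-principle density and completing the square, $\mu y - \tfrac12\mu^2 t - \tfrac{y^2}{2t} = -\tfrac{(y-\mu t)^2}{2t}$, yields $f_{T_{\mu,y}}(t) = \frac{y}{\sqrt{2\pi t^3}}\exp\!\left(-\frac{(y-\mu t)^2}{2t}\right)$, as claimed.

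For the Laplace transform I would not integrate this density but instead use the martingale $M_t := \exp\!\left(c B^{(\mu)}_t - (c\mu + \tfrac12 c^2)t\right)$, valid for every $c \in \reals$ under $\mathbb{P}^\mu$. Choosing the nonnegative root $c = -\mu + \sqrt{\mu^2 + 2\lambda}$ so that $c\mu + \tfrac12 c^2 = \lambda$, the stopped process $M_{t \wedge T_{\mu,y}}$ is bounded by $e^{cy}$ because $B^{(\mu)}_{t\wedge T_{\mu,y}} \le y$ and $\lambda \ge 0$; since $\mu > 0$ makes $T_{\mu,y}$ almost surely finite, dominated convergence and optional stopping give $1 = \mathbb{E}^\mu[M_{T_{\mu,y}}] = e^{cy}\,\mathbb{E}^\mu[e^{-\lambda T_{\mu,y}}]$, hence $\mathbb{E}[\exp(-\lambda T_{\mu,y})] = e^{-cy} = e^{-y(\sqrt{2\lambda+\mu^2}-\mu)}$. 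The only points needing a word of justification are the almost-sure finiteness of the passage time, which is immediate from positivity of the drift, and the applicability of optional stopping, which follows in both places from boundedness of the stopped martingale up to $T_{\mu,y}$; there is no genuine obstacle here.
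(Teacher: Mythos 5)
Your proof is correct. In the paper this lemma is not proved at all; it is simply quoted from the Borodin--Salminen handbook (equations 2.0.1 and 2.0.2, p.~295), so there is no ``paper argument'' to compare against --- you are supplying a derivation where the authors chose to cite. Your two ingredients --- Cameron--Martin--Girsanov to tilt the driftless reflection-principle density, plus optional stopping of the exponential martingale $\exp(cB^{(\mu)}_t-(c\mu+\tfrac12 c^2)t)$ with the root $c=\sqrt{\mu^2+2\lambda}-\mu$ --- are the standard textbook route, and the details check out: the change of measure at the stopping time is justified by localizing to $T_y\wedge n$ and letting $n\to\infty$ (which is what your phrase ``optional stopping at $T_y$'' amounts to), the algebra $\mu y-\tfrac12\mu^2 t-\tfrac{y^2}{2t}=-\tfrac{(y-\mu t)^2}{2t}$ is right, and for $\lambda\ge 0$, $\mu>0$ you correctly have $c\ge 0$, boundedness $M_{t\wedge T_{\mu,y}}\le e^{cy}$, and $T_{\mu,y}<\infty$ a.s., so optional stopping applies. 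A very minor stylistic remark: since the density is already in hand, one could alternatively obtain the Laplace transform by direct integration (it is a known Gaussian-type integral), but your martingale argument is cleaner and avoids that computation; conversely the density could be read off by inverting the Laplace transform. Either coupling of the two methods is fine, and what you wrote is a complete and correct proof of the cited fact.
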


For the sake of completeness, we include the proof of the following simple lemma.

\begin{lem}
\label{integral}
For $a,b >0$,
\[
\int_{0}^{\infty} \frac{e^{-at}-e^{-bt}}{\sqrt{2\pi t^3}} \, dt = \sqrt{2b}-\sqrt{2a}.
\]
\end{lem}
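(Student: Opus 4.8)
The statement to prove is the elementary integral identity in Lemma~\ref{integral}:
\[
\int_{0}^{\infty} \frac{e^{-at}-e^{-bt}}{\sqrt{2\pi t^3}} \, dt = \sqrt{2b}-\sqrt{2a}, \qquad a,b>0.
\]

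The plan is to reduce this to the single-parameter statement that $\int_0^\infty t^{-3/2} e^{-\lambda t}\,dt$ equals a constant times $\sqrt{\lambda}$, and then pin down the constant. First I would note that the integrand is absolutely integrable: near $t=0$ we have $e^{-at}-e^{-bt} = (b-a)t + O(t^2)$, so the integrand behaves like a constant times $t^{-1/2}$, which is integrable, and near $t=\infty$ the exponentials decay, so there is no convergence issue and we may manipulate freely.

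The cleanest route is differentiation under the integral sign. Define, for $\lambda > 0$,
\[
\varphi(\lambda) := \int_0^\infty \frac{e^{-\lambda t}}{\sqrt{2\pi t^3}}\,\Big(\text{regularized}\Big)\,dt,
\]
but since $t^{-3/2}e^{-\lambda t}$ is not integrable at $0$ on its own, it is better to work directly with the difference. Set $I(a,b)$ equal to the left-hand side; then $\partial I/\partial b = \int_0^\infty \frac{t\,e^{-bt}}{\sqrt{2\pi t^3}}\,dt = \frac{1}{\sqrt{2\pi}}\int_0^\infty t^{-1/2} e^{-bt}\,dt = \frac{1}{\sqrt{2\pi}}\cdot \Gamma(1/2) b^{-1/2} = \frac{1}{\sqrt{2}}\,b^{-1/2}$, using $\Gamma(1/2)=\sqrt{\pi}$ and the standard Gamma integral; differentiation under the integral sign is justified because $t\,e^{-bt}/\sqrt{2\pi t^3}$ is dominated locally uniformly in $b$ by an integrable function. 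Integrating in $b$ gives $I(a,b) = \sqrt{2b} + C(a)$, and by the antisymmetry $I(a,b) = -I(b,a)$ (or by setting $b=a$, which gives $I(a,a)=0$) we get $C(a) = -\sqrt{2a}$, yielding the claim. Alternatively, one can recognize $\frac{y}{\sqrt{2\pi t^3}}\exp(-\frac{(y-\mu t)^2}{2t})$ from Lemma~\ref{hitting:time} as a probability density integrating to the value of its Laplace transform at $0$, but the direct Gamma-integral computation is shorter and self-contained.

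There is essentially no obstacle here; the only point requiring a word of care is the justification of differentiating under the integral sign, which follows from the dominated convergence theorem since for $b$ in any compact subinterval $[b_0, b_1]$ of $(0,\infty)$ the integrand $t^{-1/2}e^{-bt}/\sqrt{2\pi}$ is bounded above by the integrable function $t^{-1/2}e^{-b_0 t}/\sqrt{2\pi}$. I would present the argument in three short lines: absolute integrability, the derivative computation via $\Gamma(1/2)$, and integrating back with the boundary condition $I(a,a)=0$.
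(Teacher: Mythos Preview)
Your argument is correct and is essentially the same as the paper's. The paper writes $e^{-at}-e^{-bt}=\int_a^b t\,e^{-xt}\,dx$, applies Fubini to reduce to the inner integral $\int_0^\infty t^{-1/2}e^{-xt}\,dt=\Gamma(\tfrac12)x^{-1/2}$, and then integrates $x^{-1/2}$ from $a$ to $b$; your differentiation in $b$ followed by integration back with $I(a,a)=0$ is the same computation in infinitesimal form, hinging on the identical Gamma evaluation.
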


\begin{proof}
Suppose without loss of generality that $0 < a < b$.  We have
\[
\begin{split}
 \int_{0}^{\infty} \frac{e^{-at}-e^{-bt}}{\sqrt{2\pi t^3}} \, dt 
& =
\frac{1}{\sqrt{2 \pi}}
\int_0^\infty \int_a^b t e^{-t x} \, dx \, t^{-\frac{3}{2}} \, dt \\
& =
\frac{1}{\sqrt{2 \pi}} \int_a^b \int_0^\infty t^{\frac{1}{2} - 1} e^{-x t} \, dt \, dx \\
& =
\frac{1}{\sqrt{2 \pi}} \int_a^b x^{-\frac{1}{2}} \int_0^\infty u^{\frac{1}{2} - 1} e^{-u} \, du \, dx \\
& = \frac{\sqrt{\pi}}{\sqrt{2 \pi}} \int_a^b x^{-\frac{1}{2}} \, dx \\
& = \sqrt{2b}-\sqrt{2a}, \\
\end{split}
\]
where in the third equality we used the substitution $u = x t$ and in the fourth equality we used the fact that
$\int_0^\infty u^{\frac{1}{2} - 1} e^{-u} \, du = \Gamma(\frac{1}{2}) = \sqrt{\pi}$.
\end{proof}

We now give the proof of Proposition~ \ref{laplace}.

\begin{proof}
We claim that
\begin{equation}
\label{three_variable}
\begin{split}
& \E[\exp (-\lambda_1 \tau -\lambda_2 \hat{\gamma}-\lambda_3 \tilde{T}_{\hat{\gamma}})]\\
& = \frac{4\alpha}{\sqrt{2(\lambda_1-\lambda_3)+4\alpha \lambda_2+(2\alpha+\sqrt{2\lambda_3+(\alpha+\beta)^2})^2} +\sqrt{2\lambda_1+(\alpha-\beta)^2}}. \\
\end{split}
\end{equation}
The stated equation for $\mathbb{E}[\exp(-(\rho_1 \zeta + \rho_2 L + \rho_3(\zeta - L) + \rho_4 W_\zeta))]$ then follows by noting that
\[
\begin{split}
& \rho_1 \zeta + \rho_2 L + \rho_3(\zeta-L) + \rho_4 W_\zeta \\
& \quad =
\rho_1 (\tau+\tilde{T}_{\hat{\gamma}}) + \rho_2 \left(\tau-\frac{\hat{\gamma}}{2\alpha}\right) + \rho_3\left(\tilde{T}_{\hat{\gamma}} + \frac{\hat{\gamma}}{2\alpha}\right) + \rho_4 \alpha\left(\tau-\tilde{T}_{\hat{\gamma}}-\frac{\hat{\gamma}}{\alpha}\right) \\
& \quad =
(\rho_1 + \rho_2 + \alpha \rho_4) \tau
+ \left(-\frac{\rho_2}{2 \alpha} + \frac{\rho_3}{2 \alpha}  - \rho_4\right)\hat{\gamma}
+ (\rho_1 + \rho_3 - \alpha \rho_4) \tilde{T}_{\hat{\gamma}}. \\
\end{split}
\]
The Laplace transforms for the individual random variables follow by specialization and the claimed expressions for densities then follow from standard inversion formulas.

Rather than deriving \eqref{three_variable} we will instead derive directly the Laplace transform of $\zeta$.
This illustrates the method of proof with less notational overhead.
We have
\begin{align*}
& \mathbb{E}[\exp(-\lambda \zeta)] \\
& \quad =\mathbb{E}[e^{-\lambda \tau}\mathbb{E}[e^{-\lambda\tilde{T}_{\hat{\gamma}}}\vert \tau,\hat{\gamma}]]\\
& \quad =\mathbb{E}[e^{-\lambda \tau} e^{-\hat{\gamma}(\sqrt{2\lambda +(\alpha+\beta)^2}-(\alpha+\beta))}]\\
& \quad =\int_{0}^{\infty} \int_{0}^{2\alpha t} e^{-\lambda t} e^{-x(\sqrt{2\lambda +(\alpha+\beta)^2}-(\alpha+\beta))} \frac{\exp\left(-\frac{(\alpha-\beta)^2t}{2}-2(\alpha+\beta)x\right)}{\sqrt{2\pi t^3}}\, dt \, dx \\
& \quad =\frac{1}{\sqrt{2\lambda +(\alpha+\beta)^2}+\alpha+\beta} \int_{0}^{\infty} \frac{e^{-(\lambda +\frac{(\alpha-\beta)^2}{2})t}(1-e^{-2\alpha t(\sqrt{2\lambda +(\alpha+\beta)^2}+\alpha+\beta)})}{\sqrt{2\pi t^3}} \, dt\\
& \quad =\frac{1}{\sqrt{2\lambda +(\alpha+\beta)^2}+\alpha+\beta} \int_{0}^{\infty} \frac{e^{-at}-e^{-bt}}{\sqrt{2\pi t^3}} dt
\end{align*}
for 
\[ a=\lambda +\frac{(\alpha-\beta)^2}{2}, \quad b= \lambda +\frac{(\alpha-\beta)^2}{2}+2\alpha ( \sqrt{2\lambda +(\alpha+\beta)^2}+\alpha+\beta).
\]
A little algebra shows that 
\[
a=\frac{1}{2}(2\lambda +(\alpha-\beta)^2), ~~b=\frac{1}{2}(\sqrt{2\lambda +(\alpha+\beta)^2}+2\alpha)^2.
\]
Hence, using Lemma \ref{integral}, we get that 
\[
\mathbb{E}[\exp(-\lambda \zeta)]=\frac{2\alpha+\sqrt{2\lambda+(\alpha+\beta)^2}-\sqrt{2\lambda+(\alpha-\beta)^2}}{\sqrt{2\lambda +(\alpha+\beta)^2}+\alpha+\beta}.
\]
After multiplying top and bottom by the conjugate this has following simple form
\[
\mathbb{E}[\exp(-\lambda \zeta)]=\frac{4\alpha}{2\alpha+\sqrt{2\lambda +(\alpha+\beta)^2}+\sqrt{2\lambda +(\alpha-\beta)^2}}.
\]
\end{proof}

\begin{remark}
(i) Write $H:=W_L-M_L=\sqrt{\tau} \mathbf{e}(\frac{L}{\tau})$ for the difference between the Brownian motion and its minorant at time $L$ --- see Figure~\ref{F:excursion}.  We can get an explicit description for the distribution of this random variable, though computing either its Laplace transform or density seems tedious to do.  Indeed we know that for every $0 \le u \le 1$, we have that $\mathbf{e}(u) \ed \sqrt{u(1-u)} \chi_3$,
where $\chi_3^2 \ed Q_1^2+Q_2^2+Q_3^2$ for $Q_1,Q_2,Q_3$ three independent standard Gaussian random variables. 
Hence,
\[ H \ed \sqrt{L\left(1-\frac{L}{\tau}\right)} \chi_3=\sqrt{L\left(\frac{\hat{\gamma}}{2\alpha}\right)} \chi_3=\tau\sqrt{\mathfrak{U}\left(1-\mathfrak{U}\right)}\chi_3.
\]
where $\mathfrak{U}:=\frac{\hat{\gamma}}{2\alpha \tau}$. Using the density in Theorem~\ref{lipschitz:decomp} and a change of variable gives that the  joint density of $(\tau,\mathfrak{U})$ at the point $(t,u) \in (0,\infty) \times [0,1]$ is
\[
f_{\tau,\mathfrak{U}}(t,u)=\frac{2\alpha}{\sqrt{2\pi t}} \exp\left(-\frac{(\alpha-\beta)^2}{2}t-\frac{\alpha+\beta}{\alpha} tu\right)
\] and $\chi_3$ independent of $(\tau,\mathfrak{U})$.

(ii) Set $\Psi(\rho_1, \rho_2, \rho_3, \rho_4; \alpha, \beta) = \mathbb{E}[\exp(-(\rho_1 \zeta + \rho_2 L + \rho_3(\zeta - L) + \rho_4 W_\zeta))]$.
From the time-reversal symmetry $(B_t)_{t \in \reals} \ed (B_{-t})_{t \in \reals}$, we expect that
\[
\Psi(\rho_1, \rho_2, \rho_3, \rho_4; \alpha, \beta)
=
\Psi(\rho_1, \rho_3, \rho_2, -\rho_4; \alpha, -\beta),
\]
and this is indeed the case.  This symmetry is somewhat surprising, as it is certainly not apparent from our
path decomposition.  Similarly, from the Brownian scaling 
$(c^{-1} B_{c^2 t})_{t \in \reals} \ed (B_t)_{t \in \reals}$, $c>0$, we expect that
\[
\Psi(\rho_1, \rho_2, \rho_3, \rho_4; \alpha, \beta)
=
\Psi(c^2 \rho_1, c^2 \rho_2, c^2 \rho_3, c^2 \rho_4; c \alpha, c \beta),
\]
and this also holds.

(iii) It follows from the proposition that 
\[
\mathbb{E}[\zeta] = - \frac{d}{d \lambda} \mathbb{E}[\exp(-\lambda \zeta)] |_{\lambda = 0} = \frac{1}{2(\alpha^2 - \beta^2)}.
\]
Similarly, 
\[
\mathbb{E}[L] = \frac{1}{4\alpha(\alpha-\beta)},
\]
\[
\mathbb{E}[\zeta-L] = \frac{1}{4\alpha(\alpha+\beta)},
\]
and
\[
\mathbb{E}[W_\zeta] = \frac{\beta}{2(\alpha^2 - \beta^2)}.
\]
Note that since $\lim_{t \to \infty} (B_t + \beta t)/t = \beta$ almost surely, we expect
$\mathbb{E}[W_\zeta] = \beta \mathbb{E}[\zeta]$ by a renewal--reward argument.

(iv) 
The results of this section advance the study of the excursion straddling zero in the case of the Brownian motion with drift carried out in \cite[Section 8]{zbMATH06288068}. Indeed, the previous study only determined the four-dimensional distribution $(G,D,T,\tilde{H})$, where $T:=\mathrm{argmax}\{ M_t : G \le t \le D \}$ and $\tilde{H}:=X_T-M_T$. Our approach here gives the distribution of the whole path of a generic excursion. Let us define
\[
W^{\mathrm{straddle}} := (X_{t+G}-X_G , \, 0 \le t \le D-G)
 \]
 and 
 \[
W^{\mathrm{generic}} :=(X_{t+D}-X_D, \, 0 \le t \le \zeta).
 \]
By Corollary~\ref{C:size_biasing}, we have
 \[
 \mathbb{E}\left[F(W^{\mathrm{straddle}})\right]=\mathbb{E}[\zeta]^{-1} \mathbb{E}\left[\zeta F(W^{\mathrm{generic}})\right]
 \]
 Because we know the distribution of $W^{\mathrm{generic}}$, the distribution of the straddling excursion can be recovered. In particular, the distribution of $D-G$ is just the size-biasing  of the distribution of $\zeta$; that is,
$E[f(D-G)] = \mathbb{E}[\zeta]^{-1} \mathbb{E}[\zeta f(\zeta)]$ for any nonnegative measurable function $f$. 
For example, the joint Laplace transform of the analogues of $(\zeta, L, \zeta - L, W_\zeta)$ for the straddling excursion is
\[
\frac{-\frac{d}{d\rho_1} \Psi(\rho_1, \rho_2, \rho_3, \rho_4)}{\mathbb{E}[\zeta]}.
\]
Finally, if we denote by $\Lambda$ the L\'evy measure of the subordinator associated with the regenerative set $\mathcal{Z}$, then it has the density given by the following formula
 \[
 \frac{\Lambda(dx)}{\Lambda(\mathbb{R}+)} =\left(2\alpha \frac{e^{-\frac{\alpha^2 x}{2}}}{\sqrt{2\pi x}}-2\alpha^2 \overline{\Phi}(\alpha \sqrt{x})\right) \, dx
 \]
(recall that $\Lambda$ is only defined up to a multiplicative constant).
\end{remark}

\section{Enlargement of the Brownian filtration}
\label{S:enlargement}

In this section, the L\'evy process $(X_t)_{t \in \reals}$ is the standard two-sided linear Brownian motion.  
 Set
\[
\mathcal{\overline{F}}_t:=\sigma \{ X_u : u \le t \} \vee \{ \text{the null sets of } \mathbb{P} \}, \; t \in \reals.
\]
From \cite[Chapter 3, Proposition 2.10]{zbMATH02150787} $(\mathcal{\overline{F}}_t)_{t \in \reals}$ is then right-continuous and $(X_t)_{t \in \reals}$ is a $(\mathcal{\overline{F}}_t)_{t \in \reals}$-two-sided linear standard  Brownian motion. 
We denote $(M_t)_{t \in \reals}$ the $\alpha$-Lipschitz minorant of $X$ and we let $D$ be defined, as above, by
\[
D:= \inf\{ t \geq 0 : X_t=M_t\}.
\]

By Lemma~\ref{L:recipe}, the random time $D$ can be constructed as follows.
  Consider first the stopping time $S$ given by
\[
S=\inf\{ t > 0 : X_t-\alpha t =\inf\{X_u-\alpha u : u \leq 0\} \}.
\]
Then
\[
D=\inf\{ t \geq S : X_t+\alpha t =\inf\{X_u+\alpha u : u \geq S\}\}.
\]
Thus, if we introduce the one-sided Brownian motion $\check X = (X_{t+S}-X_{S})_{t \geq 0}$ which is independent of $\mathcal{\overline{F}}_S$, and we let $\check{T}$ be the time at which the process $(\check{X}_t+\alpha t)_{t\geq 0}$ hits its ultimate infimum (this point is almost surely unique), then
\[
D = S+\check{T}.
\]
As we have seen previously, the random time $D$ is not a stopping time.  However, $D$ is an honest time in the sense of the following definition.

\begin{defn} Let $L$ be a random variable with values in $[0,\infty]$, $L$ is said to be {\em honest} with respect to the filtration $(\mathcal{\overline{F}}_t)_{t \in \reals}$ if, for every $t \ge 0$, there exists an $\mathcal{\overline{F}}_t$-measurable random variable $L_t$ such that on the set $\{L<t\}$ we have $L=L_t$.
\end{defn}

\begin{lem}
The random time $D$ is an honest time.  Moreover, if $T$ is a stopping time, then $\P\{D = T\} = 0$.
\end{lem}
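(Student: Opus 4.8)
The plan is to treat the two claims separately, using throughout the description of $D$ recalled just before the lemma: with $S$ the stopping time introduced there,
\[
D=\inf\{s\ge S : X_s+\alpha s=\inf_{u\ge S}(X_u+\alpha u)\}=S+\check T,
\]
where $\check T\ge 0$ is the a.s.\ unique time at which the Brownian motion with drift $(\check X_t+\alpha t)_{t\ge 0}$, with $\check X=(X_{S+t}-X_S)_{t\ge 0}$, attains its global infimum; in particular $D\ge S$ and $D<\infty$ almost surely.

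\emph{Honesty.} Fix $t\ge 0$ and restrict attention to the event $\{D<t\}$. There $S\le D<t$, so $S<t$, and the infimum $\inf_{u\ge S}(X_u+\alpha u)$ is attained at $D\in[S,t]$; hence on $\{D<t\}$ one has $\inf_{u\ge S}(X_u+\alpha u)=\min_{u\in[S,t]}(X_u+\alpha u)$ and
\[
D=\inf\Bigl\{u\in[S,t] : X_u+\alpha u=\min_{v\in[S,t]}(X_v+\alpha v)\Bigr\}.
\]
I would then take $D_t$ to equal the right-hand side of this display on $\{S<t\}$ and $D_t:=t$ on $\{S\ge t\}$. Since $S$ is an $(\overline{\mathcal F}_t)$-stopping time, $\{S<t\}\in\overline{\mathcal F}_t$ and the stopped continuous path $u\mapsto X_{(u\vee S)\wedge t}+\alpha((u\vee S)\wedge t)$ is $\overline{\mathcal F}_t$-measurable, so the minimum over $[S,t]$ and the first time in $[S,t]$ attaining it can be expressed through the values at rational times together with the events $\{S\le q\}\in\overline{\mathcal F}_t$; thus $D_t$ is $\overline{\mathcal F}_t$-measurable, and by the display $D_t=D$ on $\{D<t\}$. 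This is exactly the defining property of an honest time.

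\emph{No coincidence with stopping times.} Decompose $\P\{D=T\}=\P\{D=T,\,T<S\}+\P\{D=T,\,T\ge S\}$. On $\{T<S\}$ we have $D\ge S>T$, so $\{D=T\}\cap\{T<S\}$ is (a.s.) empty. On $\{D=T\}\cap\{T\ge S\}$ the defining property of $D$ gives $X_T+\alpha T=\inf_{u\ge S}(X_u+\alpha u)$, which together with $T\ge S$ forces $X_T+\alpha T=\inf_{u\ge T}(X_u+\alpha u)$; that is,
\[
(X_{T+v}-X_T)+\alpha v\ge 0\quad\text{for all }v\ge 0,
\]
and $T<\infty$ on this event. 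By the strong Markov property of the $(\overline{\mathcal F}_t)$-Brownian motion $X$ at the stopping time $T$, on $\{T<\infty\}$ the process $(X_{T+v}-X_T+\alpha v)_{v\ge 0}$ is a Brownian motion with drift $\alpha>0$; for such a process $0$ is regular for $(-\infty,0)$, so $\inf_{v\ge 0}(X_{T+v}-X_T+\alpha v)<0$ almost surely. Hence the event in the last display is null, and $\P\{D=T,\,T\ge S\}=0$.

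The whole probabilistic content sits in the last step: a Brownian motion with positive drift restarted at a stopping time instantly goes strictly below its value there, so it cannot attain its global minimum at that instant --- and this is precisely what rules out $D$ equalling any stopping time. The only mildly technical matter is the measurability bookkeeping for $D_t$ in the honesty part, and that is routine because $S$ is a stopping time and $(\overline{\mathcal F}_t)$ is right-continuous and complete.
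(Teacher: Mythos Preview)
Your proof is correct and follows essentially the same approach as the paper's. For honesty you express $D$ on $\{D<t\}$ as the first argmin of $X_u+\alpha u$ over $[S,t]$, exactly as the paper does (the paper writes the same formula with $S$ replaced by $S\ind_{\{S<a\}}$ for measurability bookkeeping); for the avoidance of stopping times you use that after $D$ the drifted process $X_{D+v}-X_D+\alpha v$ stays nonnegative while after any stopping time it immediately enters $(-\infty,0)$ by regularity, which is precisely the paper's argument (the paper just states the two incompatible almost-sure events without your preliminary split into $\{T<S\}$ and $\{T\ge S\}$, a split that is harmless but redundant since $D\ge S$ forces $\{D=T,\,T<S\}$ to be empty anyway).
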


\begin{proof}
We can write $D$ on the event $\{ D < a\}$ as
\[
\begin{split}
D \ind_{D < a}
& =
S\ind_{\{S < a \}} + \inf\{ t \geq 0 : X_{t+S\ind_{\{S<a\}}}-X_{S\ind_{\{S<a\}}}+\alpha t \\
&= \inf\{X_t + \alpha t: S\ind_{\{S<a\}}\leq t \leq a\}\}. \\
\end{split}
\]
The right-hand side is $\mathcal{\overline{F}}_a$-measurable and hence $D$ is an honest time. 
Also, $\P\{D = T\} = 0$ for any stopping time $T$ because $\P\{X_{D+t} > X_D - \alpha t,  \, \forall t > 0\} = 1$ whereas
$\P(\bigcap_{\epsilon > 0} \{ \exists 0 < t < \epsilon, X_{T+t} < X_T - \alpha t\}) = 1$.
\end{proof}
 
 We introduce now a larger filtration that is the smallest filtration containing $(\mathcal{\overline{F}}_t)_{t \in \reals}$ that makes $D$ a stopping time. 

\begin{notation}
For $t \in \reals$, set
 \[
 \mathcal{\overline{F}}_t^{D} :=\bigcap_{\epsilon >0} (\mathcal{\overline{F}}_{t+\epsilon} \vee \sigma(D \wedge (t+\epsilon))).
 \]
\end{notation}

\begin{remark}
For honest times,
\[
\mathcal{\overline{F}}_t^{D}=\{ A \in \mathcal{\overline{F}}_{\infty} : \exists A_t,B_t \in \mathcal{\overline{F}}_t ,  \, A=(A_t \cap \{D>t\})\cup (B_t \cap \{D \leq t \})\}
\]
-- see \cite[Chapter 5]{SPS_1979__13__574_0}.
\end{remark}

Our goal now is to verify that every $(\mathcal{\overline{F}}_t)_{t \ge 0}$-semimartingale remains a $(\mathcal{\overline{F}}_t^D)_{t \ge 0}$-semimartingale, and to give a formula for the canonical semimartingale decomposition in the larger filtration. 

\begin{defn} 
For any random time $\rho$, we call the $(\mathcal{\overline{F}}_t)_{t \ge 0}$-supermartingale defined by
\[ 
Z_t^{\rho}=\mathbb{P}[\rho > t \, \vert \, \mathcal{\overline{F}}_t]
 \]
the {\em Az\'ema supermartingale} associated with $\rho$. We choose versions of the conditional expectations so that this
process is c\`adl\`ag.
\end{defn}

We recall the following result from \cite[Theorem A]{MR509204}.

\begin{thm}
\label{decomp:thm}
Let $L$ be an honest time. A $(\mathcal{\overline{F}}_t)_{t \ge 0}$ local martingale $(\mathfrak{M}_t)_{t \ge 0}$ 
is a semimartingale in the larger filtration $(\mathcal{\overline{F}}_t^L)_{t \ge 0}$ and decomposes as
\[ 
\mathfrak{M}_t=\tilde{\mathfrak{M}}_t + \int_{0}^{t \wedge L} \frac{d \langle \mathfrak{M},Z^L\rangle_s}{Z_{s-}^L} - \int_{L}^t \frac{d\langle \mathfrak{M},Z^L\rangle_s}{1-Z_{s-}^L},
 \]
where $(\tilde{\mathfrak{M}}_t)_{t \geq 0}$ is a $((\mathcal{\overline{F}}_t^L)_{t \ge 0}, \, \mathbb{P})$-local martingale.
\end{thm}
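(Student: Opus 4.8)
The theorem is recalled from \cite{MR509204}, so what I would do is reconstruct the classical proof of the progressive-enlargement formula for an honest time; it simplifies slightly in the present setting because in the Brownian filtration every $(\overline{\mathcal{F}}_t)$-local martingale—and in particular the Az\'ema supermartingale $Z^L$ and all the brackets below—is continuous. By localisation it suffices to treat a bounded martingale $\mathfrak{M}$, and one may assume $\P\{L<\8\}=1$, since on $\{L=\8\}$ the enlarged filtration eventually agrees with $(\overline{\mathcal{F}}_t)$. The structural input is the description of $\overline{\mathcal{F}}^L_t$ recorded in the Remark preceding the theorem: every $A\in\overline{\mathcal{F}}^L_t$ has the form $(A_t\cap\{L>t\})\cup(B_t\cap\{L\le t\})$ with $A_t,B_t\in\overline{\mathcal{F}}_t$. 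Consequently a c\`adl\`ag $(\overline{\mathcal{F}}^L_t)$-adapted process $N$ is an $(\overline{\mathcal{F}}^L_t)$-martingale if and only if, for all $0\le t<s$,
\[
\E[(N_s-N_t)\mathbf{1}_{\{L>t\}}\mid\overline{\mathcal{F}}_t]=0
\and
\E[(N_s-N_t)\mathbf{1}_{\{L\le t\}}\mid\overline{\mathcal{F}}_t]=0,
\]
so the whole problem splits into a ``before $L$'' and an ``after $L$'' verification, and it is enough to exhibit two $(\overline{\mathcal{F}}^L_t)$-martingales whose sum is $\tilde{\mathfrak{M}}$.

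For the ``before $L$'' piece, write $Z^L=\mu^L-A^L$ for the Doob--Meyer decomposition, with $\mu^L$ a continuous martingale and $A^L$ the continuous increasing predictable process which is the dual predictable projection of $\mathbf{1}_{[L,\8)}$, so that $\langle\mathfrak{M},Z^L\rangle=\langle\mathfrak{M},\mu^L\rangle$. I would show that
\[
N^{(1)}_t:=\mathfrak{M}_{t\wedge L}-\int_0^{t\wedge L}\frac{d\langle\mathfrak{M},Z^L\rangle_s}{Z^L_{s-}}
\]
satisfies the first displayed identity. The computation is integration by parts on $\mathfrak{M}_sZ^L_s$, together with $\E[\mathbf{1}_{\{L>s\}}\mid\overline{\mathcal{F}}_s]=Z^L_s$ and the defining property $\E[\int_{(t,s]}\mathfrak{M}_u\,dA^L_u\mid\overline{\mathcal{F}}_t]=\E[\mathfrak{M}_L\mathbf{1}_{\{t<L\le s\}}\mid\overline{\mathcal{F}}_t]$ of $A^L$; the denominator $Z^L_{s-}$ materialises when one rewrites the resulting $\overline{\mathcal{F}}_t$-conditional expectation as an integral against $dA^L$. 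This step uses nothing about honesty and is valid for an arbitrary random time.

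For the ``after $L$'' piece one argues symmetrically, with $1-Z^L$ playing for the future the role that $Z^L$ plays for the past: one checks that
\[
N^{(2)}_t:=\mathfrak{M}_t-\mathfrak{M}_{t\wedge L}+\int_{t\wedge L}^t\frac{d\langle\mathfrak{M},Z^L\rangle_s}{1-Z^L_{s-}}
\]
satisfies the second displayed identity, testing against events carried by $\{L\le t\}$. Conceptually this is an $h$-transform: conditioning on $\{L\le t\}$ reweights the future of $\mathfrak{M}$ by $1-Z^L$, and $d\langle\mathfrak{M},Z^L\rangle_s/(1-Z^L_{s-})$ is exactly the Girsanov-type drift this reweighting produces. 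Honesty enters decisively to guarantee that the finite-variation term is well defined, i.e.\ that $\int_{(L,t]}(1-Z^L_{s-})^{-2}\,d\langle\mathfrak{M}\rangle_s<\8$ almost surely; this follows from the structural description of an honest time as the end of an $(\overline{\mathcal{F}}_t)$-optional set and the consequent control on how closely $Z^L$ can return to $1$ after time $L$ (for a general random time this integral can diverge and the formula fails). Setting $\tilde{\mathfrak{M}}:=N^{(1)}+N^{(2)}-\mathfrak{M}_0$, this is an $(\overline{\mathcal{F}}^L_t)$-local martingale, $\mathfrak{M}-\tilde{\mathfrak{M}}$ is the asserted finite-variation $(\overline{\mathcal{F}}^L_t)$-adapted process, and undoing the localisation finishes the proof.

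The main obstacle is the ``after $L$'' step: establishing both that $N^{(2)}$ is a bona fide $(\overline{\mathcal{F}}^L_t)$-martingale and that its drift integral converges almost surely. This is the only place where honesty—rather than just the general theory of the Az\'ema supermartingale—is genuinely needed, and extracting the required estimate on $1-Z^L$ near time $L$ from the last-exit structure of $L$ is the technical heart of the argument. The ``before $L$'' step, by contrast, is a routine integration-by-parts computation.
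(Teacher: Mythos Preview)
The paper does not prove this theorem at all: it merely recalls it with the citation ``\cite[Theorem A]{MR509204}'' and then applies it. So there is no proof in the paper to compare your attempt against.

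Your reconstruction is a faithful sketch of the classical Barlow--Jeulin argument and is the right way to go if one wanted to supply a proof here. One small slip: with your definitions of $N^{(1)}$ and $N^{(2)}$ one has $N^{(1)}_t+N^{(2)}_t=\mathfrak{M}_t-\int_0^{t\wedge L}(Z^L_{s-})^{-1}\,d\langle\mathfrak{M},Z^L\rangle_s+\int_{t\wedge L}^t(1-Z^L_{s-})^{-1}\,d\langle\mathfrak{M},Z^L\rangle_s$, which is already $\tilde{\mathfrak{M}}_t$; the extra ``$-\mathfrak{M}_0$'' should be dropped. Also, your martingale criterion for $N$ in the enlarged filtration is stated a bit loosely: to conclude that $N^{(1)}+N^{(2)}$ is an $(\overline{\mathcal{F}}^L_t)$-martingale one needs to test increments against $\overline{\mathcal{F}}^L_t$-measurable indicators, which by the structural remark reduce to $\mathbf{1}_{A_t}\mathbf{1}_{\{L>t\}}$ and $\mathbf{1}_{B_t}\mathbf{1}_{\{L\le t\}}$ with $A_t,B_t\in\overline{\mathcal{F}}_t$, not simply to conditioning on $\overline{\mathcal{F}}_t$. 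These are cosmetic; the strategy and the identification of where honesty is genuinely needed are both correct.
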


It remains to find an explicit formula for $Z_t^D$.
Define a decreasing sequence of stopping times $(S_n)_{n \geq 0}$ that converges almost surely to $S$ by
\[
S_n := \sum_{k=0}^{\infty} \frac{k+1}{2^n} \ind_{\{\frac{k}{2^n} \leq S < \frac{k+1}{2^n} \}}.
\]
Define the random times $(\check{T}_n)_{n \geq 0}$ by  
\[
\check{T}_n=\sup\{ t \geq 0 : X_{t+S_n}-X_{S_n}+\alpha t = \inf\{ X_{u+S_n}-X_{S_n}+\alpha u, u \geq 0 \}\}.
\]
Note that $\check{T}_n \underset{n \rightarrow \infty}{\rightarrow} \check{T}$ almost surely because 
$\check{T}_n=\mathrm{argmin }\{ X_{u+S}-X_S+\alpha u: u \geq S_n-S \}+S-S_n$ and $\check{T} > 0$ with probability $1$. 
Hence,
\begin{equation*}
\begin{split}
Z_t^D&=\mathbb{P}\{D>t \, \vert  \, \mathcal{\overline{F}}_t\}
=\mathbb{P}\{S+\check{T}>t \, \vert \, \mathcal{\overline{F}}_t\}\\
&=\lim_{n \rightarrow \infty}\mathbb{P}\{\check{T}_n+S_n>t \, \vert \, \mathcal{\overline{F}}_t\}\\
&=\lim_{n \rightarrow \infty} \ind_{\{S_n \geq t\}}+\mathbb{P}\{\check{T}_n>t-S_n, \, S_n \leq t \, \vert \, \mathcal{\overline{F}}_t\}\\
&=\lim_{n \rightarrow \infty}\ind_{\{S_n \geq t \}}+ \sum_{k=0, \frac{k+1}{2^n} \leq t} \mathbb{P}\left\{\check{T}_n>t-\frac{k+1}{2^n}, \, S_n= \frac{k+1}{2^n} \, \vert \, \mathcal{\overline{F}}_t\right\}\\
&=\lim_{n \rightarrow \infty}\ind_{\{S_n \geq t \}}+\sum_{k=0, \frac{k+1}{2^n} \leq t} \mathbb{P}\left\{\check{T}_n>t-\frac{k+1}{2^n} \, \vert \, \mathcal{\overline{F}}_t\right\} \ind_{\{S_n= \frac{k+1}{2^n}\}}\\
\end{split}
\end{equation*}

 If we apply Lemma~\ref{L:countable_stopping} for $\mathfrak{R}:=S_n$ and $\mathfrak{X}:=\ind_{\{\check{T}_n>t-\frac{k+1}{2^n}\}}$  we get
\[
Z_t^D=\lim_{n \rightarrow \infty}  \ind_{\{S_n \geq t \}}
+\sum_{k=0, \frac{k+1}{2^n}\leq t} \mathbb{P}\left\{\check{T}_n>t-\frac{k+1}{2^n}\, \vert \, \mathcal{\check{F}}^{(n)}_{t-\frac{k+1}{2^n}}\right\} \ind_{\{S_n =\frac{k+1}{2^n}\}}
\]
where $(\mathcal{\check{F}}^{(n)}_t)_{t \ge 0} 
=
(\bigcap_{\epsilon>0} \sigma\{X_{u+S_n}-X_{S_n} : 0 \le u \le t+\epsilon\})_{t \ge 0}$.
Now we use the following theorem from \cite[Theorem 8.22]{nikeghbali2006} .

 \begin{prop} 
 \label{azema:decomp}
Let $(N_t)_{t \geq 0}$ be a continuous local martingale such that $N_0=1$ and $\lim_{t \rightarrow \infty} N_t=0$. Let $S_t=\sup_{s \leq t} N_s$. Set 
\[
 g :=\sup\{ t \geq 0 : N_t=S_\infty\}=\sup\{ t \geq 0 : N_t=S_t\}
 \]
Then, the Az\'ema supermartingale associated with the honest time $g$ is given by
 \[
 Z_t^{g}=\mathbb{P}\{g > t \, \vert \, \mathcal{F}_t\}= \frac{N_t}{S_t}.
 \]
 \end{prop}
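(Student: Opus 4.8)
The plan is to establish the formula $Z^g_t = N_t/S_t$ directly, by the classical argument for last-passage times of continuous local martingales (this is \cite[Theorem~8.22]{nikeghbali2006}); the two ingredients are a pathwise reformulation of the event $\{g\ge s\}$ and an elementary ruin-type identity applied conditionally. I work under the standing assumption, implicit here and satisfied in the application (where $N$ will be the exponential martingale $t\mapsto e^{-2\alpha(X_{t+S_n}-X_{S_n}+\alpha t)}$), that $N\ge 0$; then $N$ is a supermartingale, $S_t\ge S_0=N_0=1$, and $Z:=N/S$ is a continuous, $[0,1]$-valued, adapted process. Since $N$ is continuous with $N_t\to 0<1\le S_\infty:=\sup_{t\ge0}N_t$, the supremum $S_\infty$ is attained; because $N$ attains no new running maximum after $g:=\sup\{t\ge0:N_t=S_t\}$ (the last time $N$ meets its running maximum), $S$ is constant on $[g,\infty)$, so $S_\infty=S_g=N_g$, which in particular identifies $g$ with $\sup\{t\ge0:N_t=S_\infty\}$ as in the statement.

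First I would record the pathwise identity $\{g\ge s\}=\{\sup_{t\ge s}N_t\ge S_s\}$, valid for every $s\ge0$: if the right-hand event holds then the tail supremum, being $\ge1>0$ while $N_t\to0$, is attained at some $\hat t\ge s$ with $N_{\hat t}\ge S_s\ge\sup_{u\le s}N_u$, so $N_{\hat t}=S_{\hat t}$ and $g\ge\hat t\ge s$; the reverse inclusion is immediate from $N_g=S_g=S_\infty\ge S_s$. Next I would prove the ruin identity: if $M$ is a nonnegative continuous local martingale with $M_0=x$ and $M_\infty=0$, then $\mathbb{P}\{\sup_{t\ge0}M_t\ge a\}=x/a$ for $a\ge x$ --- stopping $M$ at $T_a:=\inf\{t:M_t=a\}$ makes $M^{T_a}$ a nonnegative bounded, hence uniformly integrable, martingale, so $x=\mathbb{E}[M_{T_a}\ind_{\{T_a<\infty\}}]=a\,\mathbb{P}\{T_a<\infty\}$, and $\{T_a<\infty\}=\{\sup_tM_t\ge a\}$ by continuity. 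Running this conditionally on $\mathcal{F}_s$ applied to the time-shifted process $(N_{s+u})_{u\ge0}$ --- which, in the filtration $(\mathcal{F}_{s+u})_{u\ge0}$, is again a nonnegative continuous local martingale started at the $\mathcal{F}_s$-measurable value $N_s$ and tending to $0$ --- with the $\mathcal{F}_s$-measurable level $a=S_s\ge N_s$, yields
\[
\mathbb{P}\{g\ge s\mid\mathcal{F}_s\}=\mathbb{P}\bigl\{\,{\textstyle\sup_{t\ge s}N_t}\ge S_s\mid\mathcal{F}_s\bigr\}=\frac{N_s}{S_s},\qquad\text{a.s.}
\]

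Finally I would pass from $\ge$ to the required $>$: since $\{g>t\}=\bigcup_n\{g\ge t+\tfrac1n\}$, dominated convergence for conditional expectations together with the tower property give
\[
\mathbb{P}\{g>t\mid\mathcal{F}_t\}=\lim_{n\to\infty}\mathbb{E}\!\left[\frac{N_{t+1/n}}{S_{t+1/n}}\,\middle|\,\mathcal{F}_t\right]=\frac{N_t}{S_t},
\]
using that $N/S$ is continuous and bounded by $1$ and that $N_t/S_t$ is $\mathcal{F}_t$-measurable. As a consistency check, It\^o's formula gives $d(N_t/S_t)=S_t^{-1}\,dN_t-S_t^{-1}\ind_{\{N_t=S_t\}}\,dS_t$, displaying $N/S$ as a bounded local martingale minus an increasing process, i.e.\ a bona fide supermartingale, as an Az\'ema supermartingale must be. The step I expect to demand the most care is the conditional form of the ruin identity --- that is, justifying that the optional-stopping computation for $(N_{s+u})_{u\ge0}$ goes through under $\mathbb{P}(\,\cdot\mid\mathcal{F}_s)$ --- which is delivered by the conditional optional stopping theorem for the bounded martingale obtained by stopping $(N_{s+u})_{u\ge0}$ at the level $S_s$; one should also keep in mind that $N\ge0$ is essential, since otherwise $N/S$ need not even take values in $[0,1]$.
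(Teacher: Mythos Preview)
Your argument is correct and is exactly the standard proof of this result. Note, however, that the paper does not supply its own proof of this proposition: it is simply quoted as \cite[Theorem~8.22]{nikeghbali2006} and then applied. So there is nothing to compare against beyond the fact that you have reproduced (essentially) the proof from that reference --- Doob's maximal identity applied conditionally to the post-$s$ process, together with the pathwise equivalence $\{g\ge s\}=\{\sup_{u\ge s}N_u\ge S_s\}$. One small remark on the final step: the passage from $\mathbb{P}\{g\ge s\mid\mathcal F_s\}$ to $\mathbb{P}\{g>t\mid\mathcal F_t\}$ goes through cleanly because the sequence $\mathbb{P}\{g\ge t+1/n\mid\mathcal F_t\}$ is monotone in $n$, so the $L^1$ limit delivered by conditional dominated convergence is automatically an almost-sure limit; you may wish to make that monotonicity explicit.
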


 We apply Proposition~\ref{azema:decomp} to our case for $g := \check{T}_n$ and the filtration $(\mathcal{\check{F}}^{(n)}_t)_{t \ge 0} 
=
(\bigcap_{\epsilon >0}\sigma\{X_{u+S_n}-X_{S_n} : 0 \le u \le t+\epsilon\})_{t \ge 0}$.

By definition, we have
$\check{T}_n=\sup\{ t \geq 0 : \check{X}^{(n)}_t+\alpha t =\inf\{ \check{X}^{(n)}_u +\alpha u : u \geq 0\}\}$, 
where $\check{X}^{(n)}_t=X_{t+S_n}-X_{S_n}$.  Set
 \[
 N_t=\exp(-2\alpha(\check{X}^{(n)}_t+\alpha t)).
 \]
The process $N$ is clearly a local martingale that verifies the conditions of the last proposition and we also have
 \[
 \check{T}_n=\sup\{ t \geq 0: N_t=\sup_{ s \geq 0} N_s\}.
 \]
 
 Hence
 \[
 \mathbb{P}\{\check{T}_n>t \, \vert \, \mathcal{\check{F}}^{(n)}_t\}= \exp\left(-2\alpha(\check{X}^{(n)}_t+\alpha t)+2\alpha(\inf_{s \leq t} (\check{X}^{(n)}_s+\alpha s))\right).
 \]
 
Finally, we get the expression of the Az\'ema supermartingale associated with $D$ as 
 \[
\begin{split}
 Z_t^D& =\lim_{n \rightarrow \infty} \ind_{\{S_n \geq t \}} \\
& \quad  +\sum_{k=0, \frac{k+1}{2^n}\leq t}^{\infty} \exp\left(-2\alpha\left(\check{X}^{(n)}_{t-\frac{k+1}{2^n}}+\alpha\left (t-\frac{k+1}{2^n}\right)\right)+2\alpha \inf_{ 0 \leq s \leq t-\frac{k+1}{2^n}}\left(\check{X}^{(n)}_s +\alpha s\right)\right)\\
& \qquad \times \ind_{\{S_n =\frac{k+1}{2^n}\}}.
\end{split}
\]
That is,
  \[
\begin{split}
& Z_t^D \\
& \quad =\lim_{n \rightarrow \infty} \ind_{\{S_n \geq t \}} \\
& \qquad +\left[\exp\left(-2\alpha(X_{t}+\alpha (t-S_n)\right)+2\alpha\left(\inf_{s \leq t-S_n} (X_{s+S_n}+\alpha s)\right)\right] \\
& \quad \qquad \times \ind_{\{S_n < t\}}. \\
\end{split}
\]
Thus, by sending $n \rightarrow \infty$, we get that 
 \[
\begin{split}
 & Z_t^D \\
& \quad =\ind_{\{S \geq t \}} \\
& \qquad +\left[\exp\left(-2\alpha(\check{X}_{t-S}+\alpha (t-S)\right)+2\alpha\left(\inf_{s \leq t-S} (\check{X}_{s}+\alpha s)\right)\right]\ind_{\{S < t\}}. \\
\end{split}
\]

Now, using Theorem \ref{decomp:thm}, every $(\mathfrak{M}_t)_{t \ge 0}$ $(\mathcal{\overline{F}}_t)_{t \ge 0}$-local martingale is a $(\mathcal{\overline{F}}_t^D)_{t \ge 0}$-semimartingale and decomposes as follows 
\[ 
\mathfrak{M}_t=\tilde{\mathfrak{M}}_t + \int_{0}^{t \wedge D} \frac{d \langle \mathfrak{M},Z^D\rangle_s}{Z_{s}^D} - \int_{D}^t \frac{d\langle \mathfrak{M},Z^D\rangle_s}{1-Z_{s}^D},
\]
where $(\tilde{\mathfrak{M}}_t)_{t \geq 0}$ denotes a $((\mathcal{\overline{F}}_t^D),\mathbb{P})$-local martingale.

We develop further the expression of $Z^D$ to get an explicit integral representation of its local martingale part. 

\begin{lem}
\label{L:integral_expression}
Let $B$ be a standard Brownian motion and $\alpha >0$. Define the process
$(\mathfrak{H}_t)_{t \ge 0}$ by 
\[
\mathfrak{H}_t=\exp\left(-2\alpha \left[(B_t+\alpha t)-\inf_{s \leq t}(B_s+\alpha s)\right]\right)
\]
Put $I_t=\inf_{s \leq t}(B_s+\alpha s)$.  
Then,
\[
\mathfrak{H}_t=1-2\alpha  \int_{0}^t \mathfrak{H}_u \, dB_u+2 \alpha I_t.
\]
\end{lem}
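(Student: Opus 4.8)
The plan is to compute the stochastic differential of $\mathfrak{H}_t = \exp(-2\alpha[(B_t+\alpha t) - I_t])$ by applying It\^o's formula, treating $I_t = \inf_{s\le t}(B_s+\alpha s)$ as a continuous process of bounded variation. First I would write $\mathfrak{H}_t = \exp(-2\alpha(B_t+\alpha t))\exp(2\alpha I_t)$ and observe that the first factor, call it $N_t := \exp(-2\alpha(B_t+\alpha t))$, is the familiar exponential local martingale satisfying $dN_t = -2\alpha N_t\, dB_t$ (its finite-variation part vanishes because the drift $-2\alpha \cdot \alpha$ exactly cancels the It\^o correction $\tfrac12(2\alpha)^2 = 2\alpha^2$). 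The second factor $e^{2\alpha I_t}$ has zero quadratic variation and zero covariation with $N$, so the product rule gives $d\mathfrak{H}_t = e^{2\alpha I_t}\, dN_t + N_t\, d(e^{2\alpha I_t}) = -2\alpha \mathfrak{H}_t\, dB_t + 2\alpha \mathfrak{H}_t\, dI_t$.

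The key step is then to simplify the term $\mathfrak{H}_t\, dI_t$. The process $I_t$ decreases only on the (Lebesgue-null) set of times $t$ at which $B_t + \alpha t = I_t$, i.e.\ at which the running infimum is attained; but precisely at those times the exponent $-2\alpha[(B_t+\alpha t)-I_t]$ equals $0$, so $\mathfrak{H}_t = 1$ there. Hence $\mathfrak{H}_t\, dI_t = dI_t$ as measures (this is the standard ``Skorokhod-type'' identity: $\int_0^t (\mathfrak{H}_u - 1)\, dI_u = 0$ because the integrand vanishes $dI_u$-a.e.). Substituting, $d\mathfrak{H}_t = -2\alpha \mathfrak{H}_t\, dB_t + 2\alpha\, dI_t$, and integrating from $0$ to $t$ with $\mathfrak{H}_0 = 1$ and $I_0 = 0$ yields
\[
\mathfrak{H}_t = 1 - 2\alpha \int_0^t \mathfrak{H}_u\, dB_u + 2\alpha I_t,
\]
as claimed.

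The main obstacle — really the only non-cosmetic point — is justifying the identity $\mathfrak{H}_t\, dI_t = dI_t$ rigorously, since $I$ is not smooth and one must argue about the support of the measure $dI$. This is handled by noting that $dI$ is carried by $\{u : B_u + \alpha u = I_u\}$ and that $\mathfrak{H}_u = 1$ on that set, so $\int_0^t |\mathfrak{H}_u - 1|\, |dI_u| = 0$; one can make this precise by approximating or by invoking the general fact that for a continuous nonincreasing process $I$ adapted to the infimum, $\int_0^t f(B_u+\alpha u - I_u)\, dI_u = f(0)(I_t - I_0)$ for continuous $f$. A secondary routine point is confirming that $N$ is a genuine local martingale with the stated SDE and that $\langle N, e^{2\alpha I}\rangle = 0$ since $e^{2\alpha I}$ has finite variation; both are standard and need no elaboration.
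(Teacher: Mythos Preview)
Your proof is correct and follows essentially the same approach as the paper's own argument. The paper applies It\^o's formula directly to $\mathfrak{H}_t = F(B_t+\alpha t, I_t)$ with $F(x,y)=\exp(2\alpha(y-x))$ rather than factoring via $N_t$ and the product rule, but this is purely cosmetic; both arrive at $d\mathfrak{H}_t = -2\alpha\,\mathfrak{H}_t\,dB_t + 2\alpha\,\mathfrak{H}_t\,dI_t$ and then invoke the same support argument (that $dI$ is carried on $\{t:\mathfrak{H}_t=1\}$) to replace $\mathfrak{H}_t\,dI_t$ by $dI_t$.
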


\begin{proof}
Applying It\^o's formula on the semimartingale $\mathfrak{H}_t=F(B_t+\alpha t, I_t)$, where $F(x,y)=\exp(2\alpha(y-x))$, gives
\begin{equation*}
\begin{split}
 \, d\mathfrak{H}_t&=-2\alpha \mathfrak{H}_tdB_t-2\alpha^2\mathfrak{H}_t dt + 2\alpha \mathfrak{H}_t dI_t + \frac{1}{2} (4\alpha^2) \mathfrak{H}_tdt \\
&=-2\alpha \mathfrak{H}_t dB_t +2\alpha \mathfrak{H}_t dI_t\\
 \, d\mathfrak{H}_t&=-2\alpha \mathfrak{H}_tdB_t+2\alpha dI_t
\end{split}
\end{equation*}
The last line follows from the fact that the measure $dI_t$ is carried on the set $\{t : B_t+\alpha t =I_t\}=\{t : \mathfrak{H}_t=1 \}$.
\end{proof}

Substituting formula from Lemma~\ref{L:integral_expression} into the expression for $Z^D$ we get that 
\[ 
\begin{split}
Z_t^D& =\ind_{\{S \geq t \}} \\
& \quad +\ind_{\{S <t \}}(1-2\alpha  \int_{0}^{t-S}\exp(-2\alpha(\check{X}_{u}+\alpha u)+2\alpha(\inf_{s \leq u}(\check{X}_s+\alpha s)) \, d\check{X}_u  \\
& \quad + 2\alpha \inf_{s \leq t-S} (\check{X}_s+\alpha s)). \\
\end{split}
\]
This can also be written as 
\[ 
\begin{split}
Z_t^D & =1+2 \alpha\ind_{\{S <t \}} \inf_{s \leq t-S} (\check{X}_s+\alpha s) \\
& \quad -2\alpha  \int_{0}^{(t-S)\vee 0}\exp\left(-2\alpha(\check{X}_{u}+\alpha u)+2\alpha\left(\inf_{s \leq u} (\check{X}_s+\alpha s)\right)\right) \, d\check{X}_u. \\
\end{split}
\]

Put $H_u:=\exp(-2\alpha(\check{X}_{u}+\alpha u)+2\alpha(\inf_{s \leq u} (\check{X}_s+\alpha s)))$.
We want to write the integral $ \int_{0}^{(t-S)\vee 0} H_u d\check{X}_u$ as a stochastic integral with respect to the original Brownian motion $X$. For that we consider the time-change $(C_t , \, t \geq 0)$ defined by $C_t :=t+S$. It is clear that this a family of stopping times such that the maps $s \mapsto C_s$ are almost surely increasing and continuous. 
Using  \cite[Chapter V, Proposition 1.5]{zbMATH02150787}, we get that for every bounded $(\mathcal{\overline{F}}_t)_{t \ge 0}$-progressively measurable process $(H_t)_{t \ge 0}$ we have 
\[
\int_{C_0}^{C_t} H_u  \, dX_u= \int_{0}^t H_{C_u}  \, dX_{C_u}.
\]
In our case this becomes
\[
 \int_{S}^{t+S} H_{u-S}  \, dX_u= \int_{0}^t H_{u} \, d\check{X}_u.
\]
Hence,  
\[
\int_{0}^{(t-S)\vee0} H_u \, d\check{X}_u= \int_{S}^{(t-S) \vee 0+S} H_{u-S} \,  \, dX_u= \int_{0}^{t} \ind_{u\geq S}H_{u-S}  \, dX_u.
\]
Finally, 
\[
Z_t^D=1+2 \alpha\ind_{\{S <t \}}\inf_{s \leq t-S} (\check{X}_s+\alpha s)-2\alpha \int_{0}^{t} A_u   \, dX_u,
\]
where  
\[
\begin{split}
A_u & =\ind_{u\geq S}H_{u-S} \\
& =\ind_{u\geq S}\exp\left(-2\alpha(\check{X}_{u-S}+\alpha u)+2\alpha\left(\inf_{s \leq u-S}(\check{X}_s+\alpha s)\right)\right); \\
\end{split}
\]
that is,
\[
A_u=\ind_{u\geq S}\exp\left(-2\alpha(X_{u}+\alpha u)+2\alpha\left(\inf_{S \leq s \leq u} (X_{s}+\alpha s)\right)\right).
\]
The process $t \mapsto 2 \alpha\ind_{\{S <t \}}\inf_{s \leq t-S}(\check{X}_s+\alpha s))$ is decreasing and so the $(\mathcal{\overline{F}}_t)_{t \ge 0}$-local martingale part of $Z^D$ is equal to 
\[
-2\alpha \int_{0}^{t} A_u  \, dX_u.
\]

From the integral representation of martingales with respect to the Brownian filtration (see \cite[Chapter 5, Theorem 3.4]{zbMATH02150787}, every bounded $(\mathcal{\overline{F}}_t)_{t \ge 0}$-martingale $(\mathfrak{M}_t)_{t \ge 0}$ can be written as 
\[
\mathfrak{M}_t=C+ \int_{0}^t \mu_s  \, dX_s.
\]
Such a process decomposes as a $(\mathcal{\overline{F}}_t^D)_{t \ge 0}$-semimartingale in the following way 
\[
\mathfrak{M}_t=\tilde{\mathfrak{M}}_t -2\alpha \int_{0}^{t \wedge D} \frac{\mu_sA_s \, ds}{Z_{s}^D} +2\alpha \int_{D}^t \frac{\mu_sA_s \, ds}{1-Z_{s}^D},
 \]
where $(\tilde{\mathfrak{M}}_t)_{t \ge 0}$ is a $((\mathcal{\overline{F}}_t^D)_{t \ge 0},\mathbb{P})$-local martingale.

\section{General facts about the $\alpha$-Lipschitz minorant}
\label{S:minorants}

Recall that a function $f: \mathbb{R} \mapsto \mathbb{R}$ admits an $\alpha$-Lipschitz minorant  $m$ if and only if $f$ is bounded below on compact sets, $\liminf_{t \rightarrow -\infty} f(t)-\alpha t > -\infty$, and $\liminf_{t \rightarrow +\infty} f(t)+\alpha t > -\infty$.
In this case, 
\begin{equation}
\label{explicit_expression}
m(t)=\inf\{f(s)+\alpha  \vert  t-s  \vert  : s \in \mathbb{R} \}, \quad t \in \reals.
\end{equation}

The following result is obvious from \eqref{explicit_expression}.

\begin{lem}
\label{L:space-time_homogeneous}
Suppose that $f: \reals \to \reals$ is a function with an $\alpha$-Lipschitz minorant.
For $x,s \in \reals$, define $g: \reals \to \reals$ by 
$g = x + f(s + \cdot)$.  
Write $m_f$ and $m_g$ for the respective $\alpha$-Lipschitz minorants of $f$ and $g$.
Then $m_g = x + m_f(s + \cdot)$.
\end{lem}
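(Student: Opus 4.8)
The plan is to derive the identity directly from the explicit representation \eqref{explicit_expression} of the $\alpha$-Lipschitz minorant. First I would observe that the hypotheses ensuring existence of an $\alpha$-Lipschitz minorant --- namely boundedness below on compact intervals together with $\liminf_{t \to -\infty} h(t) - \alpha t > -\infty$ and $\liminf_{t \to +\infty} h(t) + \alpha t > -\infty$ --- are plainly invariant under adding a constant and under translating the argument. Hence $g = x + f(s + \cdot)$ admits an $\alpha$-Lipschitz minorant $m_g$ whenever $f$ admits one, so that both sides of the asserted equality are well defined.

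Next I would compute, for fixed $t \in \reals$, using \eqref{explicit_expression},
\[
m_g(t) = \inf\{ g(u) + \alpha |t - u| : u \in \reals \} = \inf\{ x + f(s + u) + \alpha |t - u| : u \in \reals \}.
\]
Performing the change of variable $v = s + u$ and noting that $|t - u| = |(t + s) - v|$, this becomes
\[
m_g(t) = x + \inf\{ f(v) + \alpha |(t + s) - v| : v \in \reals \} = x + m_f(t + s),
\]
which is precisely the value of $x + m_f(s + \cdot)$ at $t$. Since $t$ was arbitrary, $m_g = x + m_f(s + \cdot)$.

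There is no real obstacle here; the one point worth recording is that, as recalled in the text preceding \eqref{mformula}, the infimum in \eqref{explicit_expression} does define the greatest $\alpha$-Lipschitz function dominated by its argument, so the pointwise identity just established is the entire content of the lemma. One could alternatively run the argument through the first line of \eqref{mformula} by checking that $h - \alpha|t - \cdot| \le g$ holds if and only if $(h - x) - \alpha|(t + s) - \cdot| \le f$, but the change of variable above is the most economical route.
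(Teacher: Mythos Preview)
Your proof is correct and follows exactly the approach the paper indicates: the paper simply declares the lemma ``obvious from \eqref{explicit_expression}'' without giving details, and your change-of-variable computation is precisely the one-line verification that makes this obviousness explicit.
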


The next result is a consequence of \cite[Corollary~9.2]{zbMATH06288068} and Lemma~\ref{L:space-time_homogeneous}, but we include a proof for the sake of completeness.

\begin{lem}
Consider a function $f:\reals \to \reals$ for which the $\alpha$-Lipschitz minorant $m$ exists.  Fix $a \in \mathbb{R}$ such that $m(a)=f(a)$.  Define $f^\rightarrow : \reals \to \reals$ by
\[
f^{\rightarrow}(t)
=
\begin{cases}
        f(a)+\alpha(t-a),& t\leq a, \\
        f(t),& t>a. \\
\end{cases}
\]
Denote the $\alpha$-Lipschitz minorant of $f^{\rightarrow}$ by $m^{\rightarrow}$.
Then $m(t)=m^{\rightarrow}(t)$ for all $t\geq a$. 
\end{lem}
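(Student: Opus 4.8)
The plan is to argue directly from the explicit formula \eqref{explicit_expression}, namely $m(t) = \inf\{f(s) + \alpha|t-s| : s \in \reals\}$, together with the analogous formula for $m^{\rightarrow}$. First I would record that $f^{\rightarrow}$ does admit an $\alpha$-Lipschitz minorant, so that $m^{\rightarrow}$ is well defined: $f^{\rightarrow}$ is bounded below on compact sets, $\liminf_{t \to -\infty}(f^{\rightarrow}(t) - \alpha t) = f(a) - \alpha a > -\infty$ because $f^{\rightarrow}$ is affine with slope $\alpha$ on $(-\infty, a]$, and $\liminf_{t \to +\infty}(f^{\rightarrow}(t) + \alpha t) = \liminf_{t \to +\infty}(f(t) + \alpha t) > -\infty$ because $f^{\rightarrow} = f$ on $(a, \infty)$.

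Now fix $t \ge a$. The key elementary observation is that for every $s \le a$ one has $|t - s| = t - s$ and $f^{\rightarrow}(s) = f(a) + \alpha(s - a)$, so
\[
f^{\rightarrow}(s) + \alpha|t - s| = f(a) + \alpha(s - a) + \alpha(t - s) = f(a) + \alpha(t - a),
\]
a quantity not depending on $s$. Splitting the infimum defining $m^{\rightarrow}(t)$ according to whether $s \le a$ or $s > a$, and using $f^{\rightarrow} = f$ on $(a, \infty)$, I obtain
\[
m^{\rightarrow}(t) = \min\!\Big( f(a) + \alpha(t - a), \ \inf_{s > a}\{ f(s) + \alpha|t - s| \} \Big).
\]

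It then remains to show that $m(t)$ equals the same expression. I would perform the identical split for $m(t) = \inf_{s \in \reals}\{ f(s) + \alpha|t - s| \}$. For the part with $s \le a$, I use that $m \le f$, that $m$ is $\alpha$-Lipschitz, and the hypothesis $m(a) = f(a)$: for $s \le a$,
\[
f(s) \ge m(s) \ge m(a) - \alpha(a - s) = f(a) - \alpha(a - s),
\]
whence $f(s) + \alpha|t - s| = f(s) + \alpha(t - s) \ge f(a) + \alpha(t - a)$, with equality attained at $s = a$. Therefore $\inf_{s \le a}\{ f(s) + \alpha|t - s|\} = f(a) + \alpha(t - a)$, so that $m(t) = \min\big( f(a) + \alpha(t - a), \inf_{s > a}\{f(s) + \alpha|t - s|\}\big) = m^{\rightarrow}(t)$, which is the assertion. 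Note that the term $\inf_{s>a}\{f(s)+\alpha|t-s|\}$ appears identically on both sides since $f^{\rightarrow}=f$ on $(a,\infty)$, so no regularity of $f$ at $a$ is needed.

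The whole argument is a two-line computation once \eqref{explicit_expression} is invoked, so I do not anticipate a genuine obstacle. The only point requiring care — and the place where the assumption that $a$ is a contact point, $m(a) = f(a)$, is used — is the bound $f(s) \ge f(a) - \alpha(a - s)$ for $s \le a$; without it $f$ could drop below the slope-$\alpha$ line through $(a, f(a))$ to the left of $a$, and the truncation $f^{\rightarrow}$ would then change $m$ on $[a, \infty)$.
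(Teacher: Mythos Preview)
Your proof is correct and follows essentially the same approach as the paper's: both start from the explicit formula \eqref{explicit_expression}, split the infimum at $s=a$, and use the contact hypothesis $m(a)=f(a)$ to handle the $s\le a$ contribution. Your presentation is in fact slightly tidier: rather than proving $m^{\rightarrow}(t)\le m(t)$ and $m(t)\le m^{\rightarrow}(t)$ as two separate steps, you compute each side directly as $\min\big(f(a)+\alpha(t-a),\ \inf_{s>a}\{f(s)+\alpha|t-s|\}\big)$, and your use of the Lipschitz bound $m(s)\ge m(a)-\alpha(a-s)$ to control $\inf_{s\le a}\{f(s)+\alpha|t-s|\}$ is a clean way to get the needed inequality with equality at $s=a$.
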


\begin{proof}
From the expression of $m^{\rightarrow}$ we have for every $t \geq a$ 
\[
m^{\rightarrow}(t)= \inf\{f(s)+\alpha \vert t-s \vert : s>a\} \wedge (m(a)+\alpha (t-a)).
\]

Note that
\begin{equation*}
\begin{split}
m(a)+\alpha(t-a) &= \inf\{f(s)+\alpha \vert s-a \vert : s \in \mathbb{R} \} +\alpha (t-a)\\
& \leq \inf\{f(s)+\alpha \vert s-a \vert : s  \leq a \} +\alpha (t-a)\\
& = \inf\{f(s)+\alpha (a-s)+\alpha(t-a) : s\leq a \}\\
&= \inf\{f(s) + \alpha \vert t-s \vert : s \leq a \}
\end{split}
\end{equation*}
and so $m^{\rightarrow}(t) \leq m(t)$ for $t \ge a$.

For the reverse inequality, it suffices to prove that 
\[
\inf\{f(s)+\alpha \vert t-s \vert : s \in \mathbb{R} \} \leq m(a)+\alpha(t-a), \quad t \ge a.
\]
By definition, $m(a) \leq f(s) + \alpha \vert s-a \vert $ for all $s \in \mathbb{R}$, and so, by the triangle inequality, 
\[
m(a)+\alpha(t-a) \geq f(s)+\alpha \vert s-a \vert + \alpha \vert a-t \vert \geq f(s) +\alpha \vert t-s \vert
\]
for every $s \in \reals$.
\end{proof}

The following result is \cite[Lemma~9.4]{zbMATH06288068}.

\begin{lem}
\label{L:recipe}
Let $f: \reals \to \reals$ be a c\`adl\`ag function with
$\alpha$-Lipschitz minorant $m : \reals \to \reals$. 
Set 
\[
\mathbf{d} := \inf \{ t>0 : f(t) \wedge f(t-) = m(t) \}, 
\]
\[
\mathbf{s} := 
\inf \left\{  
t > 0 : f(t) \wedge f(t-) -  \alpha t
\leq 
\inf\{ f(u) - \alpha u : u \leq 0 \}  
\right\},
\]
and
\[
\mathbf{e} := 
\inf \left\{ t \ge \mathbf{s} : f(t) \wedge f(t-) + \alpha (t-\mathbf{s}) 
= 
\inf \{ f(u) + \alpha (u-\mathbf{s}) : u \geq \mathbf{s}\}  \right\}.
\]
Suppose that
$f(\mathbf{s}) \le f(\mathbf{s}-)$.
Then, $\mathbf{e}=\mathbf{d}$.
\end{lem}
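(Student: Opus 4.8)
The plan is to argue purely deterministically with $f$ and its minorant $m$, abbreviating $\phi(t) := f(t) \wedge f(t-)$ and $A_0 := \inf\{\phi(u) - \alpha u : u \le 0\}$. I will use that $m(t) = \inf\{\phi(s) + \alpha|t-s| : s \in \reals\}$ (this agrees with \eqref{explicit_expression} since each value $f(s-)$ is a limit of $f$-values), that $m \le \phi$ pointwise, that $\mathcal Z := \{t : \phi(t) = m(t)\}$ is closed because $\phi - m$ is lower semicontinuous, and the elementary remark that whenever the infimum defining $m(t)$ is attained at $s = s^\ast$ one has $m(s^\ast) = \phi(s^\ast)$, i.e. $s^\ast \in \mathcal Z$ (from $|s^\ast - s| \ge |t - s| - |t - s^\ast|$). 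Note $\mathbf d = \min(\mathcal Z \cap (0,\infty))$.

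First I would show that every contact point $t_0 > 0$ satisfies $\phi(t_0) - \alpha t_0 \le A_0$, hence $t_0 \ge \mathbf s$: indeed $\phi(t_0) = m(t_0) \le \phi(u) + \alpha(t_0 - u)$ for all $u \le 0$, so $\phi(t_0) - \alpha t_0 \le \phi(u) - \alpha u$, and one takes the infimum over $u \le 0$. Thus $\mathbf d \ge \mathbf s$ and $\mathcal Z \cap (0, \mathbf d) = \emptyset$. Next I would establish the reduction
\[
m(t) = m_{\mathbf s}(t) := \inf\{\phi(s) + \alpha|t-s| : s \ge \mathbf s\}, \qquad t \ge \mathbf s,
\]
obtained by splitting the infimum defining $m(t)$ at $\mathbf s$ and checking $\inf\{\phi(s) - \alpha s : s \le \mathbf s\} = \phi(\mathbf s) - \alpha\mathbf s$: for $0 < s < \mathbf s$ the quantity exceeds $A_0$ by the definition of $\mathbf s$, for $s \le 0$ it is $\ge A_0$, and $\phi(\mathbf s) - \alpha\mathbf s \le A_0$ follows by letting $t_n \downarrow \mathbf s$ through the defining set of $\mathbf s$ and using right-continuity of $f$ at $\mathbf s$ (this is the place where the hypothesis $f(\mathbf s) \le f(\mathbf s-)$, i.e. $\phi(\mathbf s) = f(\mathbf s)$, is invoked to rule out a troublesome upward jump at $\mathbf s$). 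Consequently $\mathbf d$ is the least $t \ge \mathbf s$ with $\phi(t) = m_{\mathbf s}(t)$.

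The crux is the claim that for every $t \in [\mathbf s, \mathbf d)$ one has $\phi(t) + \alpha t > \inf\{\phi(w) + \alpha w : w \ge t\}$. To prove it, write $m_{\mathbf s}(t) = \min\big(\inf_{\mathbf s \le w \le t}(\phi(w)+\alpha(t-w)),\ \inf_{w\ge t}(\phi(w)+\alpha(w-t))\big)$; the first infimum is attained on the compact interval $[\mathbf s,t]$ (as $\phi$ is lower semicontinuous) at some $w_0$, and if it were $\le$ the second it would equal $m_{\mathbf s}(t) = m(t)$, making $w_0 \in \mathcal Z$ with $0 < \mathbf s \le w_0 \le t < \mathbf d$ — impossible. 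Hence $m_{\mathbf s}(t) = \inf_{w\ge t}(\phi(w)+\alpha(w-t))$, and since $t \notin \mathcal Z$, $\phi(t) > m_{\mathbf s}(t)$, which is the claim after adding $\alpha t$. Now put $B^\ast := \inf\{\phi(u)+\alpha u : u \ge \mathbf s\}$; unravelling the definition, $\mathbf e$ is the least $t \ge \mathbf s$ with $\phi(t) + \alpha t = B^\ast$. Since $\{w \ge t\} \subseteq \{u \ge \mathbf s\}$ for $t \ge \mathbf s$, the claim gives $\phi(t) + \alpha t > B^\ast$ for all $t \in [\mathbf s, \mathbf d)$, so $\mathbf e \ge \mathbf d$. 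For $\mathbf e \le \mathbf d$ it suffices to show $\phi(\mathbf d) + \alpha\mathbf d = B^\ast$: from $\mathbf d \in \mathcal Z$ one gets $\inf\{\phi(w)+\alpha w : w \ge \mathbf d\} = \phi(\mathbf d)+\alpha\mathbf d$, and if $B^\ast < \phi(\mathbf d)+\alpha\mathbf d$ one chooses $u_n \ge \mathbf s$ with $\phi(u_n)+\alpha u_n \to B^\ast$; these eventually lie in $[\mathbf s,\mathbf d)$, hence in the compact $[\mathbf s,\mathbf d]$, so a limit point $u^\ast$ has $\phi(u^\ast)+\alpha u^\ast \le B^\ast$ by lower semicontinuity, which forces $u^\ast = \mathbf d$ by the claim and contradicts $B^\ast < \phi(\mathbf d)+\alpha\mathbf d$. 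Therefore $\mathbf e = \mathbf d$.

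The main obstacle, and essentially the only place requiring real care, is the interface at the non-stopping time $\mathbf s$ between the càdlàg path $f$ and the continuous minorant $m$ — concretely, the estimate $\phi(\mathbf s) - \alpha\mathbf s \le A_0$ and the reduction $m|_{[\mathbf s,\infty)} = m_{\mathbf s}$ — which is exactly where the jump hypothesis $f(\mathbf s) \le f(\mathbf s-)$ is needed; everything else is soft compactness and lower-semicontinuity bookkeeping. (Since the statement coincides with \cite[Lemma~9.4]{zbMATH06288068}, one may alternatively just cite that source.)
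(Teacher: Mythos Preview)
The paper gives no proof of this lemma: it merely records that the result is \cite[Lemma~9.4]{zbMATH06288068} and moves on. Your proposal therefore goes well beyond the paper by supplying a complete self-contained argument, and the argument is correct. The reduction $m|_{[\mathbf s,\infty)}=m_{\mathbf s}$ via the key inequality $\phi(\mathbf s)-\alpha\mathbf s\le A_0$ (which is exactly where the hypothesis $f(\mathbf s)\le f(\mathbf s-)$ enters, to pass the right-limit through $\phi$), the claim that $\phi(t)+\alpha t>B^\ast$ for $t\in[\mathbf s,\mathbf d)$ obtained by forcing any minimizer over $[\mathbf s,t]$ into the forbidden region $\mathcal Z\cap(0,\mathbf d)$, and the lower-semicontinuity/compactness endgame establishing $\phi(\mathbf d)+\alpha\mathbf d=B^\ast$ are all sound. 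Two boundary cases are tacitly excluded --- $\mathbf s=0$ (where $w_0=0$ need not lie in $(0,\mathbf d)$) and $\mathbf d=\infty$ (where $[\mathbf s,\mathbf d]$ is not compact) --- but neither arises in the paper's L\'evy-process applications, and as you yourself observe one may in any case simply cite the original source.
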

Let us also state here a simple expression of the time $\mathbf{s}$ when the time zero is a contact point. 
\begin{lem}
\label{L:simplified recip}
Let $f :\reals \to \reals$ be a continuous function with $\alpha$-Lipschitz minorant $m : \reals \to \reals$, and suppose that we have $m(0)=f(0)=0$, then $\mathbf{s}$ defined in 
Lemma~\ref{L:recipe} takes
 the following form
\[
\mathbf{s}=\inf \{ t>0: f(t)=\alpha t\}.
\]
\end{lem}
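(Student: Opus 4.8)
The plan is to reduce the definition of $\mathbf{s}$ in Lemma~\ref{L:recipe} to an elementary first‑passage statement and then argue by continuity. First, since $f$ is continuous we may replace $f(t)\wedge f(t-)$ by $f(t)$ everywhere, so $\mathbf{s}=\inf\{t > 0 : f(t)-\alpha t\le\inf\{f(u)-\alpha u : u\le 0\}\}$. Next I would evaluate the constant on the right using $m(0)=f(0)=0$: for $u\le 0$ we have $f(u)-\alpha u=f(u)+\alpha|u|\ge m(u)+\alpha|u|\ge m(0)=0$, the last step because $m$ is $\alpha$-Lipschitz with $m(0)=0$, while at $u=0$ the quantity equals $f(0)=0$. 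Hence $\inf\{f(u)-\alpha u : u\le 0\}=0$, and therefore $\mathbf{s}=\inf\{t > 0 : f(t)\le\alpha t\}$.

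It then remains to show that $\mathbf{s}=\inf\{t > 0 : f(t)=\alpha t\}$. Since $\{t : f(t)=\alpha t\}\subseteq\{t : f(t)\le\alpha t\}$ we get $\mathbf{s}\le\inf\{t > 0 : f(t)=\alpha t\}$, and if the set $\{t > 0 : f(t)\le\alpha t\}$ is empty then both infima equal $+\infty$ and there is nothing to prove. Otherwise it suffices to prove $f(\mathbf{s})=\alpha\mathbf{s}$, for then $\mathbf{s}\in\{t > 0 : f(t)=\alpha t\}$ and the reverse inequality follows. By minimality of $\mathbf{s}$ we have $f(t)>\alpha t$ for every $t\in(0,\mathbf{s})$, so letting $t\uparrow\mathbf{s}$ and invoking continuity yields $f(\mathbf{s})\ge\alpha\mathbf{s}$; on the other hand $\{t > 0 : f(t)\le\alpha t\}$ is relatively closed in $(0,\infty)$, so provided $\mathbf{s}>0$ its infimum $\mathbf{s}$ lies in it and $f(\mathbf{s})\le\alpha\mathbf{s}$. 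Combining the two gives $f(\mathbf{s})=\alpha\mathbf{s}$, as required.

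The one point that genuinely uses the hypothesis that $0$ is a contact point — and the step I would treat most carefully — is the assertion $\mathbf{s}>0$, that is, $f(t)>\alpha t$ for all sufficiently small $t > 0$. This follows from the sawtooth description of the minorant on the excursion interval whose left endpoint is $0$ (Lemma~\ref{sawtooth}): on a right-neighbourhood of $0$ one has $m(t)=\alpha t$, whereas $f(t)>m(t)$ at every point outside the contact set, and the interior of an excursion interval contains no contact point; hence $f(t)>\alpha t$ there. In the only situation where the lemma is applied — the generic Brownian excursion $W$, for which $W_t+\alpha t=R^{(\alpha+\beta)}_t$ is a three-dimensional Bessel process with drift, so that $R^{(\alpha+\beta)}_t/t\to\infty$ as $t\downarrow 0$ — this is immediate, and then the preceding two paragraphs give $\mathbf{s}=\inf\{t > 0 : f(t)=\alpha t\}$.
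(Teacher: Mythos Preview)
Your argument is correct and its core coincides with the paper's: both show $\inf\{f(u)-\alpha u:u\le 0\}=0$ using that $f(u)-\alpha u=f(u)+\alpha|u|$ for $u\le 0$ together with the minorant (you via $f\ge m$ and the $\alpha$-Lipschitz property of $m$, the paper via the explicit formula $m(0)=\inf_{u}\{f(u)+\alpha|u|\}$; these are equivalent). Where you go further is in justifying the passage from $\inf\{t>0:f(t)\le\alpha t\}$ to $\inf\{t>0:f(t)=\alpha t\}$, a step the paper's one-line proof simply does not address; your continuity argument together with the observation that one needs $\mathbf{s}>0$, and your verification of this in the intended Brownian application via the sawtooth description, make your treatment strictly more complete than the paper's.
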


\begin{proof}
This is straightforward, as 
\begin{equation*}
0 \ge \inf \{ f(u)-\alpha u : u \le 0\} \ge \inf \{ f(u)+\alpha \vert u\vert : u \in \reals\}=m(0)=0
\end{equation*}
because $f(0)=0$.
\end{proof}

The following lemma describes the shape of the $\alpha$-Lipschitz minorant between two consecutive points of the contact set.
It is \cite[Lemma 8.3]{zbMATH06288068}.

\begin{lem}
\label{sawtooth}
Suppose that $f:\reals \to \reals$ that is a c\`adl\`ag with $\alpha$-Lipschitz minorant
$m : \reals \to \reals$. The set $\{t \in \reals : m(t) = f (t)\wedge f(t-) \}$ is closed. If t'<t'' are such that $f(t')\wedge f(t'-)=m(t')$, $f(t'') \wedge f(t''-)=m(t'')$, and $f(t) \wedge f(t-) > m(t)$ for all $t'<t<t''$, then setting $t^{*}=(f(t'')\wedge f(t''-)-f(t')\wedge f(t'-)+\alpha(t''+t'))/(2\alpha)$, 
\[
m(t)
=
\begin{cases}
        f(t')\wedge f(t'-)+\alpha (t-t'),& t' \le t \le t^{*}, \\
        f(t'')\wedge f(t'')+\alpha(t''-t),&, t^{*}\le t \le t''. \\
\end{cases}
\]
\end{lem}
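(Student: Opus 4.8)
The plan is to settle the (easy) closedness assertion first and then concentrate the real work on the sawtooth formula, which splits into an immediate upper bound and a more delicate lower bound.

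For closedness, set $g(t):=f(t)\wedge f(t-)$. Since $f$ is c\`adl\`ag, I would first record that $g$ is lower semicontinuous: given $t$ and $\varepsilon>0$, right-continuity of $f$ controls both $f(s)$ and $f(s-)$ for $s$ slightly to the right of $t$, while the existence of $f(t-)$ controls them for $s$ slightly to the left, so $g(s)\ge g(t)-\varepsilon$ in a punctured neighbourhood of $t$. Because $m$ is $\alpha$-Lipschitz, hence continuous, $m(t)=m(t)\wedge m(t-)\le f(t)\wedge f(t-)=g(t)$ for every $t$, so the contact set equals $\{g-m\le 0\}$; being a sublevel set of a lower semicontinuous function, it is closed.

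Now fix consecutive contact points $t'<t''$ with $g(t)>m(t)$ on $(t',t'')$, put $a:=g(t')=m(t')$, $b:=g(t'')=m(t'')$, and let $t^{*}$ be as in the statement, so the asserted profile on $[t',t'']$ is $\mathrm{tent}(t):=\min\{a+\alpha(t-t'),\,b+\alpha(t''-t)\}$. The bound $m\le\mathrm{tent}$ on $[t',t'']$ is immediate from the representation \eqref{explicit_expression}: evaluating the infimum at $s=t'$ (or, at a left jump, letting $s\uparrow t'$) gives $m(t)\le a+\alpha(t-t')$ for $t\ge t'$, and symmetrically $m(t)\le b+\alpha(t''-t)$ for $t\le t''$. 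For the reverse inequality, fix $t_{0}\in(t',t'')$ (endpoints are trivial) and choose $(s_{n})$ with $f(s_{n})+\alpha|t_{0}-s_{n}|\to m(t_{0})$; pass to a subsequence that either converges to a finite $s_{\infty}$ or tends to $+\infty$ or to $-\infty$. In the finite case, lower semicontinuity of $g$ forces $m(t_{0})\ge g(s_{\infty})+\alpha|t_{0}-s_{\infty}|\ge m(s_{\infty})+\alpha|t_{0}-s_{\infty}|\ge m(t_{0})$, so every inequality is an equality; in particular $m(s_{\infty})=g(s_{\infty})$, i.e. $s_{\infty}$ is a contact point and therefore lies outside $(t',t'')$. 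If $s_{\infty}\le t'$, then $m(t_{0})=m(s_{\infty})+\alpha(t_{0}-s_{\infty})$ together with $m(s_{\infty})\ge m(t')-\alpha(t'-s_{\infty})$ yields $m(t_{0})\ge a+\alpha(t_{0}-t')\ge\mathrm{tent}(t_{0})$, and $s_{\infty}\ge t''$ is symmetric. If instead $s_{n}\to+\infty$, then $c_{+}:=\liminf_{s\to\infty}(f(s)+\alpha s)$ is finite (this is precisely the standing hypothesis that the minorant exists), letting $s\to\infty$ in \eqref{explicit_expression} gives $m(t)\le c_{+}-\alpha t$ for all $t$, while the minimizing sequence gives $m(t_{0})\ge c_{+}-\alpha t_{0}$; hence $m(t_{0})=c_{+}-\alpha t_{0}$, so $m$, being $\alpha$-Lipschitz and touching the line $t\mapsto c_{+}-\alpha t$ from below at $t_{0}$, coincides with it on $[t_{0},\infty)$. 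Evaluating at $t''$ and using $m(t'')=b$ gives $c_{+}=b+\alpha t''$, whence $m(t_{0})=b+\alpha(t''-t_{0})\ge\mathrm{tent}(t_{0})$; escape to $-\infty$ is symmetric. In all cases $m(t_{0})\ge\mathrm{tent}(t_{0})$, which with the upper bound proves $m=\mathrm{tent}$ on $[t',t'']$.

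I expect the main obstacle to be exactly the case where the infimum in \eqref{explicit_expression} is only approached along $s_{n}\to\pm\infty$: there is no genuine minimiser of $s\mapsto f(s)+\alpha|t_{0}-s|$ to exploit, so rigidity has to be extracted from the fact that an $\alpha$-Lipschitz function that touches a line of slope $\mp\alpha$ is pinned to it on a half-line, and that information must then be transported to the endpoint $t''$ (or $t'$). A secondary, pervasive nuisance is the bookkeeping between $f$ and $f(\cdot)\wedge f(\cdot-)$ at jump times, which is handled throughout by the lower-semicontinuity observation and by approaching $t'$ and $t''$ from the appropriate side.
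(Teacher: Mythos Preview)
Your argument is correct. Note, however, that the paper does not actually prove this lemma: it simply records that the statement is \cite[Lemma~8.3]{zbMATH06288068} and moves on. So there is no ``paper's own proof'' to compare against; you have supplied a self-contained proof where the authors chose to cite one.

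That said, your write-up is sound. The lower-semicontinuity of $t\mapsto f(t)\wedge f(t-)$ is exactly the right device for closedness, and the three-case analysis for the lower bound (finite limit point forcing a contact point outside $(t',t'')$; escape to $\pm\infty$ pinning $m$ to a line of slope $\mp\alpha$ which then must pass through the contact value at $t''$ or $t'$) is clean and complete. Two cosmetic remarks: in the upper-bound step you can avoid the ``letting $s\uparrow t'$'' manoeuvre by simply using that $m(t')=a$ together with the $\alpha$-Lipschitz property of $m$ gives $m(t)\le a+\alpha(t-t')$ directly; and in the escape-to-$+\infty$ case, the finiteness of $c_{+}$ is in fact forced by your minimising sequence (since $f(s_n)+\alpha s_n\to m(t_0)+\alpha t_0$), so you need not invoke the standing hypothesis there.
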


\section{Two random time lemmas}
\label{S:stopping_time}

We detail in this section two lemmas that we used previously in Section~\ref{S:after_first_contact} and  Section~\ref{S:enlargement}. We consider here $(X_t)_{t \in \reals}$ to be two-sided L\'evy process with $(\mathcal{F}_t)_{t \in \reals}$ as its canonical right-continuous filtration; that is, $\mathcal{F}_t := \bigcap_{\epsilon > 0} \sigma\{X_s, \, -\infty < s \le t+\epsilon\}$, $t \in \reals$.

\begin{lem}
\label{L:countable_stopping}
Let $\mathfrak{R}$ 
be a $(\mathcal{F}_t)_{t \in \reals}$-stopping time that takes values in a countable subset of $\mathbb{R}$. 
Define the $\sigma$-fields $\check{\mathcal{F}}_t :=\bigcap_{\epsilon > 0} \sigma( \{X_{u+\mathfrak{R}}-X_{\mathfrak{R}}  : 0 \leq u \leq t+\epsilon\})$, $t \ge 0$, and put $\check{\mathcal{F}}_\infty = \bigvee_{t \ge 0} \check{\mathcal{F}}_\infty$. For every random variable $\mathfrak{X}$ measurable with respect to $\check{\mathcal{F}}_{\infty}$ we have for every $t \in \reals$ and $r \leq t$ that almost surely 
\[
\mathbb{E}\left[\mathfrak{X} \, \vert \, \mathcal{F}_t \right] \ind_{\{\mathfrak{R}=r\}}
=
\mathbb{E}\left[\mathfrak{X}\ind_{\{\mathfrak{R}=r\}} \, \vert \, \mathcal{F}_t \right]
=
\mathbb{E}\left[\mathfrak{X} \, \vert \, \mathcal{\check{F}}_{t-r} \right] \ind_{\{\mathfrak{R}=r\}}.
\]
\end{lem}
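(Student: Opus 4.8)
The plan is to reduce both equalities to the elementary Markov (independent--increments) property of $X$ at the two \emph{deterministic} times $r$ and $t$, after using the strong Markov property once to pass from $\mathfrak{R}$ to $r$. Since $\check{\mathcal{F}}_\infty = \sigma\{X_{u+\mathfrak{R}} - X_{\mathfrak{R}} : u \ge 0\}$, there is a measurable functional $\Phi$ on the c\`adl\`ag path space $\Omega^\ra$ with $\mathfrak{X} = \Phi(\omega')$, where I write $\omega' := (X_{u+\mathfrak{R}} - X_{\mathfrak{R}})_{u \ge 0}$; a monotone-class/truncation argument lets us assume $\Phi$ bounded. By the strong Markov property of the two-sided L\'evy process at the stopping time $\mathfrak{R}$ (as used already in the proof of Theorem~\ref{T:post-D}), $\omega'$ has the law of $(X_u)_{u \ge 0}$. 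The first equality is then routine: $\mathfrak{R}$ is a stopping time with countable range and $r \le t$, so $\{\mathfrak{R} = r\} = \{\mathfrak{R} \le r\} \setminus \bigcup_{r' < r}\{\mathfrak{R} \le r'\} \in \mathcal{F}_r \subseteq \mathcal{F}_t$ (the union running over the countable range of $\mathfrak{R}$), and the $\mathcal{F}_t$-measurable bounded factor $\ind_{\{\mathfrak{R}=r\}}$ can be moved inside or outside $\E[\,\cdot \mid \mathcal{F}_t]$.

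For the second equality, the crucial step is to ``freeze'' $\mathfrak{R}$ at the value $r$: on $\{\mathfrak{R} = r\}$ one has $\omega' = \check X^r$ where $\check X^r := (X_{u+r} - X_r)_{u \ge 0}$, hence $\mathfrak{X}\ind_{\{\mathfrak{R}=r\}} = \Phi(\check X^r)\ind_{\{\mathfrak{R}=r\}}$. Now split the fixed path $\check X^r$ at the deterministic time $t-r$: let $Y := (X_{u+r} - X_r)_{0 \le u \le t-r}$, which is a genuine function of $(X_s)_{s \le t}$ and so $\mathcal{F}_t$-measurable, and $Y' := (X_{t+v} - X_t)_{v \ge 0}$, which by the Markov property at the deterministic time $t$ is independent of $\mathcal{F}_t$. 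Writing $\Phi(\check X^r) = \Psi(Y, Y')$ for the obvious concatenation functional $\Psi$ (namely $\check X^r_u = Y_u$ for $u \le t-r$ and $\check X^r_u = Y_{t-r} + Y'_{u-(t-r)}$ for $u > t-r$) and using the freezing lemma for conditional expectations, one gets $\E[\mathfrak{X}\ind_{\{\mathfrak{R}=r\}} \mid \mathcal{F}_t] = \ind_{\{\mathfrak{R}=r\}}\,\psi(Y)$, where $\psi(y) := \E[\Psi(y, Y')]$.

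It remains to show $\psi(Y)\ind_{\{\mathfrak{R}=r\}} = \E[\mathfrak{X} \mid \check{\mathcal{F}}_{t-r}]\ind_{\{\mathfrak{R}=r\}}$. Here $\mathfrak{X} = \Phi(\omega')$ depends only on $\omega'$, and $\check{\mathcal{F}}_{t-r} = \bigcap_{\epsilon > 0}\sigma(\omega'_u : 0 \le u \le t-r+\epsilon)$ is the right-continuous natural filtration of $\omega'$ at time $t-r$. Since $\omega'$ has the law of the L\'evy process $X$, the increments $(\omega'_{(t-r)+v} - \omega'_{t-r})_{v \ge 0}$ are independent of $\check{\mathcal{F}}_{t-r}$ and have the law of $(X_v)_{v \ge 0}$, that is, the law of $Y'$; applying the freezing lemma again gives $\E[\mathfrak{X} \mid \check{\mathcal{F}}_{t-r}] = \psi\big((\omega'_u)_{0 \le u \le t-r}\big)$ almost surely, with the \emph{same} function $\psi$ thanks to the stationarity of the increments of $X$. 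Finally, on $\{\mathfrak{R} = r\}$ we have $(\omega'_u)_{0 \le u \le t-r} = Y$, so $\E[\mathfrak{X} \mid \check{\mathcal{F}}_{t-r}]\ind_{\{\mathfrak{R}=r\}} = \psi(Y)\ind_{\{\mathfrak{R}=r\}}$, completing the chain of equalities.

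I expect the main obstacle to be precisely this identification of the two conditional expectations with one and the same measurable function $\psi$: the left-hand conditional expectation $\E[\mathfrak{X}\ind_{\{\mathfrak{R}=r\}}\mid\mathcal{F}_t]$ is computed from the post-$r$ path $\check X^r$, whereas $\E[\mathfrak{X}\mid\check{\mathcal{F}}_{t-r}]$ is computed from the post-$\mathfrak{R}$ path $\omega'$, and these agree only after one restricts to $\{\mathfrak{R}=r\}$. Making the common-function identification fully rigorous is cleanest if one first runs a monotone-class reduction to $\Phi$ of the product form $\Phi(\omega') = \prod_{i=1}^{k} g_i(\omega'_{u_i})$, for which the functional $\Psi$ and the concatenation are completely explicit; all other ingredients (two invocations of the freezing lemma and the elementary independence of the increments of $X$ after a deterministic time) are standard.
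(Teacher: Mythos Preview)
Your proposal is correct and follows essentially the same route as the paper: freeze $\mathfrak{R}$ at $r$ on $\{\mathfrak{R}=r\}$, split the post-$r$ path at the deterministic time $t$, use independence of increments to compute the $\mathcal{F}_t$-conditional expectation, and then recognize the resulting expression as $\mathbb{E}[\mathfrak{X}\mid\check{\mathcal{F}}_{t-r}]$ via the L\'evy property of the post-$\mathfrak{R}$ process. The paper carries this out after a monotone-class reduction to products $\prod_i f_i(X_{u_i+\mathfrak{R}}-X_{\mathfrak{R}})$ and tests against an arbitrary $\mathcal{F}_t$-measurable $A_t$, whereas you phrase the same computation abstractly via the freezing lemma applied to the concatenation functional $\Psi(Y,Y')$; you yourself note that the product reduction is the cleanest way to make the common-function identification rigorous, which is exactly what the paper does.
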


\begin{proof}
The first equality is trivial because the event $\{\mathfrak{R}=r\}$ is $\mathcal{F}_r$-measurable and hence $\mathcal{F}_t$-measurable.   We therefore need only prove the second equality.

By a monotone class argument, it suffices to show that the second inequality holds for 
$\mathfrak{X} = \prod_{i=1}^n f_i(X_{u_i+\mathfrak{R}}-X_{\mathfrak{R}})$, where
$0 \leq u_1<u_2< \ldots<u_n$ and $f_1,\ldots,f_n$ are nonnegative Borel functions. 

We have for any $\mathcal{F}_t$-measurable nonnegative random variable $A_t$ that
 \[
\begin{split}
&\mathbb{E}\left[A_t\ind_{\{\mathfrak{R} =r\}} \prod_{i=1}^n(f_i(X_{u_i+\mathfrak{R}}-X_{\mathfrak{R}}))\right] \\
& \quad = \mathbb{E}\left[A_t\ind_{\{\mathfrak{R} =r\}} \prod_{i=1}^n(f_i(X_{u_i+r}-X_r))\right]\\
& \quad =\mathbb{E}\Bigg[A_t\ind_{\{\mathfrak{R} =r\}}\prod_{u_i<t-r} (f_i(X_{u_i+r}-X_r))  \\
 & \qquad \times \mathbb{E}\Bigg[\prod_{u_i\geq t-r} (f_i(X_{u_i+r}-X_r))\, \vert \, \mathcal{F}_t\Bigg]\Bigg]\\
& \quad = 
\mathbb{E}\Bigg[A_t\ind_{\{\mathfrak{R} =r\}}\prod_{u_i<t-r} (f_i(X_{u_i+r}-X_r)) \\
& \qquad \times \mathbb{E}\Bigg[\prod_{u_i\geq t-r}(f_i(X_{u_i+r}-X_t+X_{(t-r)+r}-X_r))\, \vert \, \mathcal{F}_t\Bigg]\Bigg]. \\
\end{split}
\]

Using the independence and stationarity of the increments of the L\'evy process $X$ gives
\[
\begin{split}
& \mathbb{E}\Bigg[A_t\ind_{\{\mathfrak{R} =r\}}\prod_{i=1}^n(f_i(X_{u_i+\mathfrak{R}}-X_{\mathfrak{R}}))\Bigg] \\
& \quad =\mathbb{E}\Bigg[A_t\ind_{\{\mathfrak{R} =r\}}\prod_{u_i<t-r}^n(f_i(X_{u_i+r}-X_r))\\
& \qquad \times \prod_{u_i\geq t-r}^n(g_i(X_{(t-r)+r}-X_r))\Bigg]\\
 \end{split}
\]
for $g_i :=\mathbb{E}\left[f_i(X_{u_i+r-t}+\cdot)\right]$. Thus,
\[
\begin{split}
& \mathbb{E}\Bigg[A_t\ind_{\{\mathfrak{R} =r\}}\prod_{i=1}^n(f_i(X_{u_i+\mathfrak{R}}-X_{\mathfrak{R}}))\Bigg] \\
& \quad =
\mathbb{E}\Bigg[A_t\ind_{\{\mathfrak{R} =r\}}\prod_{u_i<t-r} (f_i(X_{u_i+\mathfrak{R}}-X_{\mathfrak{R}}))\\
&\quad \times  \prod_{u_i\geq t-r} (g_i(X_{(t-r)+\mathfrak{R}}-X_{\mathfrak{R}}))\Bigg].\\
\end{split}
\]

Because the process $(X_{u+\mathfrak{R}}-X_{\mathfrak{R}})_{u \geq 0}$ is itself a L\'evy process with respect to the filtration $(\mathcal{\check{F}}_t)_{t \ge 0}$ and it has the same distribution as $(X_t)_{t \ge 0}$, we have 
\[
\begin{split}
& \mathbb{E}\left[ \prod_{i=1}^n(f_i(X_{u_i+\mathfrak{R}}-X_{\mathfrak{R}})) \, \vert \, \mathcal{\check{F}}_{t-r} \right] \\
& \quad =\prod_{u_i<t-r}^n(f_i(X_{u_i+\mathfrak{R}}-X_{\mathfrak{R}})) \prod_{u_i\geq t-r}^n(g_i(X_{(t-r)+\mathfrak{R}}-X_{\mathfrak{R}})).\\
\end{split}
\]

Thus we finally get the desired equality
\[
\begin{split}
& \mathbb{E}\left[A_t\ind_{\{\mathfrak{R} =r\}} \prod_{i=1}^n(f_i(X_{u_i+\mathfrak{R}}-X_{\mathfrak{R}}))\right] \\
& \quad =\mathbb{E}\left[A_t\ind_{\{\mathfrak{R} =r\}}\mathbb{E}\left[ \prod_{i=1}^n(f_i(X_{u_i+\mathfrak{R}}-X_{\mathfrak{R}})) \, \vert \, \mathcal{\check{F}}_{t-r} \right]\right]. \\
\end{split}
\]
\end{proof}

\begin{lem}
\label{optional:filtration}
Suppose that almost surely $\lim_{t \rightarrow \infty} X_t=\infty$ and that zero is regular for $(0,\infty)$ for the process $(X_t)_{t \in \reals}$. Let $\mathfrak{R}$  be a $(\mathcal{F}_t)_{t \in \reals}$-stopping time. 
Put $(\check X_t)_{t \ge 0} := (X_{t+\mathfrak{R}} - X_{\mathfrak{R}})_{t \ge 0}$.
Consider the random time $\mathfrak{L}:=\sup \{ t \ge 0 : \check{X}_t \wedge \check{X}_{t-}=\inf \{\check{X}_u : u \ge 0\} \}$.
Then, setting  $\mathfrak{D}:=\mathfrak{R}+\mathfrak{L}$, the $\sigma$-field $\sigma\{\check{X}_{t+\mathfrak{L}}-\check{X}_{\mathfrak{L}}: t \ge 0\}$ is independent of the $\sigma$-field
$\mathcal{F}_{\mathfrak{D-}}\vee \sigma \{ X_\mathfrak{D} \}$.
%where :
%\[
%\mathcal{F}_{\mathfrak{D-}}:=\sigma \{ \xi_{\mathfrak{D}} : \,  (\xi_t)_{t \in \reals} \text{ is a previsible process with respect to the filtration } (\mathcal{F}_t)_{t \in \reals} \}.
%\]
\end{lem}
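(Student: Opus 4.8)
The plan is to route the desired independence through the intermediate $\sigma$-field $\mathcal{F}_\mathfrak{R}\vee\check{\mathcal{H}}$, where $\check{\mathcal{H}}:=\sigma\{\mathfrak{L}\}\vee\sigma\{\check X_{s\wedge\mathfrak{L}}:s\ge0\}$ is the ``pre-minimum'' $\sigma$-field of $\check X$ and $\Xi:=\sigma\{\check X_{t+\mathfrak{L}}-\check X_\mathfrak{L}:t\ge0\}$ is the ``post-minimum'' $\sigma$-field appearing in the statement. First I would establish (a) that $\Xi$ is independent of $\mathcal{F}_\mathfrak{R}\vee\check{\mathcal{H}}$ and (b) that $\mathcal{F}_{\mathfrak{D}-}\vee\sigma\{X_\mathfrak{D}\}\subseteq\mathcal{F}_\mathfrak{R}\vee\check{\mathcal{H}}$; granting these, the lemma follows at once, because a sub-$\sigma$-field of a $\sigma$-field independent of $\Xi$ is itself independent of $\Xi$.

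For (a): by the strong Markov property of the two-sided L\'evy process $X$ at the stopping time $\mathfrak{R}$, the process $\check X=(X_{t+\mathfrak{R}}-X_\mathfrak{R})_{t\ge0}$ is independent of $\mathcal{F}_\mathfrak{R}$ and has the law of $(X_t)_{t\ge0}$; in particular $\lim_{t\to\infty}\check X_t=\infty$ almost surely and $0$ is regular for $(0,\infty)$ for $\check X$. By \cite[Proposition~2.4]{millarzeroone} the set of times at which $\check X$ attains its infimum over $[0,\infty)$ reduces almost surely to the single point $\mathfrak{L}$, and, just as in the proof of Theorem~\ref{T:post-D}, regularity of $0$ for $(0,\infty)$ forces $\check X_\mathfrak{L}=\inf\{\check X_u:u\ge0\}$; note that $\mathfrak{L}$ and $\check X_\mathfrak{L}$ are $\check{\mathcal{H}}$-measurable. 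By the path decomposition of a Markov process at its global minimum \cite{millarpostmin}, the process generating $\Xi$ is conditionally independent of $\check{\mathcal{H}}$ given $\check X_\mathfrak{L}$, and by the spatial homogeneity of $\check X$ the conditional law of that process given $\check X_\mathfrak{L}$ does not depend on $\check X_\mathfrak{L}$ --- this is exactly the argument with the entrance laws $Q_t$ and transition kernels $H_t$ carried out in the proof of Theorem~\ref{T:post-D}, and it does not invoke the present lemma --- so in fact $\Xi$ is independent of $\check{\mathcal{H}}$. Since $\Xi$ and $\check{\mathcal{H}}$ are both contained in $\sigma(\check X)$ and $\sigma(\check X)$ is independent of $\mathcal{F}_\mathfrak{R}$, a Dynkin argument on the $\pi$-system $\{A\cap H:A\in\mathcal{F}_\mathfrak{R},\ H\in\check{\mathcal{H}}\}$ then yields $\Xi\perp\bigl(\mathcal{F}_\mathfrak{R}\vee\check{\mathcal{H}}\bigr)$.

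For (b): using $X_t=X_\mathfrak{R}+\check X_{t-\mathfrak{R}}$ for $t\ge\mathfrak{R}$ we get $\mathfrak{D}=\mathfrak{R}+\mathfrak{L}$ and $X_\mathfrak{D}=X_\mathfrak{R}+\check X_\mathfrak{L}$, both $\mathcal{F}_\mathfrak{R}\vee\check{\mathcal{H}}$-measurable. For $s\in\reals$ I would write
\[
X_s\ind\{s<\mathfrak{D}\}=X_s\ind\{s<\mathfrak{R}\}+\bigl(X_\mathfrak{R}+\check X_{(s-\mathfrak{R})\vee0}\bigr)\ind\{\mathfrak{R}\le s<\mathfrak{D}\},
\]
where the first summand equals $X_{s\wedge\mathfrak{R}}\ind\{s<\mathfrak{R}\}\in\mathcal{F}_\mathfrak{R}$, and where, on $\{\mathfrak{R}\le s<\mathfrak{D}\}$, one has $0\le s-\mathfrak{R}<\mathfrak{L}$, so the second summand is the stopped process $(\check X_{v\wedge\mathfrak{L}})_{v\ge0}$ evaluated at an $\mathcal{F}_\mathfrak{R}\vee\check{\mathcal{H}}$-measurable time and is therefore $\mathcal{F}_\mathfrak{R}\vee\check{\mathcal{H}}$-measurable; hence $X_s\ind\{s<\mathfrak{D}\}\in\mathcal{F}_\mathfrak{R}\vee\check{\mathcal{H}}$ for all $s$. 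Since $\mathcal{F}_{\mathfrak{D}-}$ is generated by the random variables $\xi_\mathfrak{D}$ with $\xi$ previsible for $(\mathcal{F}_t)$, and since the previsible $\sigma$-field is generated by processes of the form $Y\ind_{(a,b]}(\cdot)$ with $Y\in\mathcal{F}_a$, a functional monotone class argument reduces matters to showing $Y\ind\{a<\mathfrak{D}\le b\}\in\mathcal{F}_\mathfrak{R}\vee\check{\mathcal{H}}$ for such $Y$; but on $\{\mathfrak{D}>a\}$ every $X_s$ with $s\le a$ coincides with $X_s\ind\{s<\mathfrak{D}\}$, so $Y\ind\{\mathfrak{D}>a\}$, and hence $\xi_\mathfrak{D}=Y\ind\{\mathfrak{D}>a\}\ind\{a<\mathfrak{D}\le b\}$, is $\mathcal{F}_\mathfrak{R}\vee\check{\mathcal{H}}$-measurable. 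Together with $X_\mathfrak{D}\in\mathcal{F}_\mathfrak{R}\vee\check{\mathcal{H}}$ this gives (b).

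The step I expect to be the main obstacle is (b), specifically the measurability bookkeeping for the two non-stopping random times $\mathfrak{R}$ and $\mathfrak{D}$: one must check carefully that $X_s\ind\{s<\mathfrak{R}\}\in\mathcal{F}_\mathfrak{R}$ and that $\{\mathfrak{R}\le s\}\in\mathcal{F}_\mathfrak{R}$, that evaluating the stopped process $\check X_{\cdot\wedge\mathfrak{L}}$ at an $\mathcal{F}_\mathfrak{R}\vee\check{\mathcal{H}}$-measurable argument stays $\mathcal{F}_\mathfrak{R}\vee\check{\mathcal{H}}$-measurable, that $\mathcal{F}_{\mathfrak{D}-}$ is generated by previsible-at-$\mathfrak{D}$ functionals in the way used above, and one should dispose of the degenerate cases ($\mathfrak{L}=0$, $\mathfrak{D}\le0$). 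By contrast (a) is comparatively soft: the substantive inputs --- uniqueness of the minimizer \cite[Proposition~2.4]{millarzeroone} and the post-minimum decomposition \cite{millarpostmin} --- are quoted, and the homogenization step has already been executed in the proof of Theorem~\ref{T:post-D}.
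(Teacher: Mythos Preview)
Your approach is correct and takes a genuinely different route from the paper. The paper's proof proceeds by discretization and limiting: it reduces to showing $\mathbb{E}[\mathfrak{Y}\,\ind\{\mathfrak{D}>T\}\,h^1(X_{\mathfrak{R}})h^2(\check X_{\mathfrak{L}})]=\mathbb{E}[\mathfrak{Y}]\,\mathbb{E}[\ind\{\mathfrak{D}>T\}\,h^1(X_{\mathfrak{R}})h^2(\check X_{\mathfrak{L}})]$ for stopping times $T$, first treats the case where both $\mathfrak{R}$ and $T$ take dyadic values by invoking the auxiliary Lemma~\ref{L:countable_stopping} (which transfers conditional expectations from $\mathcal{F}_t$ to $\check{\mathcal{F}}_{t-r}$ on $\{\mathfrak{R}=r\}$), and then passes to the limit in $T$ and finally in $\mathfrak{R}$. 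Your argument is structurally cleaner: you sandwich $\mathcal{F}_{\mathfrak{D}-}\vee\sigma\{X_\mathfrak{D}\}$ inside $\mathcal{F}_\mathfrak{R}\vee\check{\mathcal{H}}$ and show the latter is independent of $\Xi$, so no discretization, no limit, and no need for Lemma~\ref{L:countable_stopping} at all. What the paper's approach buys is that it never has to name the pre-minimum $\sigma$-field $\check{\mathcal{H}}$ explicitly or verify the inclusion (b); it works entirely with generators of the previsible $\sigma$-field via stopping times. What your approach buys is conceptual transparency and economy: once (a) and (b) are stated, the lemma is immediate, and (a) is indeed soft given Millar's theorem and spatial homogeneity. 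Your own diagnosis is accurate---the work is all in (b), and the measurability points you list (that $Y\ind\{\mathfrak{D}>a\}$ for $Y\in\mathcal{F}_a$ reduces to a function of the $Z_s:=X_s\ind\{s<\mathfrak{D}\}$, that the stopped process $(\check X_{u\wedge\mathfrak{L}})_{u\ge0}$ is jointly $\mathcal{B}\otimes\check{\mathcal{H}}$-measurable so that evaluation at $(s-\mathfrak{R})\vee0$ lands in $\mathcal{F}_\mathfrak{R}\vee\check{\mathcal{H}}$, and that one may take $Y$ in the uncompleted $\sigma\{X_s:s\le a\}$ when generating the previsible $\sigma$-field) are exactly the ones that need to be written out carefully.
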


\begin{proof}
We begin with an observation.
Define the $\sigma$-fields $\check{\mathcal{F}}_t :=\bigcap_{\epsilon > 0} \sigma( \{X_{s+\mathfrak{R}}-X_{\mathfrak{R}}  : 0 \leq s \leq t+\epsilon\})$, $t \ge 0$, and put $\check{\mathcal{F}}_\infty = \bigvee_{t \ge 0} \check{\mathcal{F}}_\infty$. 
It follows from the part of the proof of Theorem~\ref{T:post-D} which comes before we employ the current lemma that  $\sigma\{\check{X}_{t+\mathfrak{L}}-\check{X}_{\mathfrak{L}}: t \ge 0\}$  is independent of
\[
\check{\mathcal{F}}_{\mathfrak{L}}:=\sigma \{ \check{\xi}_{\mathfrak{L}} : \, (\check{\xi}_t)_{t \ge 0} \text{ is an optional process with respect to the filtration } (\check{\mathcal{F}_t})_{t \ge 0} \}.
\]

Returning to the statement of the lemma, and by noticing that $X_D=X_{\mathfrak{R}}+\check{X}_{\mathfrak{L}}$, it suffices to prove for any bounded, nonnegative $\sigma\{\check{X}_{t+\mathfrak{L}}-\check{X}_{\mathfrak{L}}: t \ge 0\}$-measurable random variable $\mathfrak{Y}$, any bounded, nonnegative, continuous functions $g^1,\ldots,g^{n},h^1,h^2$, and any previsible processes $\xi^1,\ldots,\xi^n$ with respect to the filtration $(\mathcal{F}_t)_{t \in \reals}$ that 
\begin{equation}
\label{zeroth:ind}
\mathbb{E}\left[\mathfrak{Y} \prod_{i=1}^n g^{i}(\xi^{i}_{\mathfrak{D}})h^1(X_{\mathfrak{R}})h^2(\check{X}_{\mathfrak{L}})\right] = \mathbb{E}[\mathfrak{Y}] \mathbb{E}\left[\prod_{i=1}^n g^{i}(\xi^{i}_{\mathfrak{D}})h^1(X_{\mathfrak{R}})h^2(\check{X}_{\mathfrak{L}})\right] .
\end{equation}
However, $(\prod_{i=1}^n g^{i}(\xi^{i}_t))_{t \in \reals}$ is itself a previsible process, so it suffices for \eqref{zeroth:ind} to prove for any bounded, nonnegative $\sigma\{\check{X}_{t+\mathfrak{L}}-\check{X}_{\mathfrak{L}}: t \ge 0\}$-measurable random variable $\mathfrak{Y}$, any bounded, nonnegative process $\xi$ that is previsible with respect to filtration $(\mathcal{F}_t)_{t \in \reals}$, and any bounded, nonnegative, continuous functions $h^1,h^2$ that
\begin{equation}
\label{optional:ind}
\mathbb{E}[\mathfrak{Y} \xi_{\mathfrak{D}}h^1(X_{\mathfrak{R}})h^2(\check{X}_{\mathfrak{L}})]=\mathbb{E}[\mathfrak{Y}]\mathbb{E}[\xi_{\mathfrak{D}}h^1(X_{\mathfrak{R}})h^2(\check{X}_{\mathfrak{L}})].
\end{equation}

A stochastic process viewed as a map from $\Omega \times \reals$ to $\mathbb{R}$ is previsible with respect to the filtration $(\mathcal{F}_t)_{t \in \reals}$ if and only if it is measurable with respect to the $\sigma$-field generated by the maps $(\omega,t) \mapsto \ind_{t > T(\omega)}$, where $T$ ranges through the set of $\reals \cup \{+\infty\}$-valued $(\mathcal{F}_t)_{t \in \reals}$-stopping times (see \cite[Chapter IV, Corollary 6.9]{rogerswills} for the analogous fact about previsible processes indexed by $(0,\infty)$). Also, note that the collection of the sets $\mathcal{A}=\{ \{(\omega,t) : t > T(\omega) \}: T \text{ is a stopping time}\}$ is a $\pi$-system because the minimum of two stopping times is a stopping time. Hence,
to establish \eqref{optional:ind}, it suffices by a monotone class argument to show
for any $\reals \cup \{+\infty\}$ -valued $(\mathcal{F}_t)_{t \in \reals}$-stopping time $T$ that
\begin{equation}
\label{second:ind}
\mathbb{E}[\mathfrak{Y}\ind_{\{\mathfrak{D} > T\}}h^1(X_{\mathfrak{R}})h^2(\check{X}_{\mathfrak{L}})]=\mathbb{E}[\mathfrak{Y}]\mathbb{E}[\ind_{\{ \mathfrak{D} > T\}}h^1(X_{\mathfrak{R}})h^2(\check{X}_{\mathfrak{L}})].
\end{equation}
Because we have that $\ind_{\{ D > T\}}=\lim_{n \rightarrow \infty} \ind_{\{D > T \wedge n\}}$, it further suffices to check \eqref{second:ind} for $T$ an $\reals$-valued $(\mathcal{F}_t)_{t \in \reals}$-stopping time.

Consider \eqref{second:ind} in the special case when $\mathfrak{R}$ and $T$ take values in the countable set $\{ r_k:=\frac{k}{2^{n}}, k \in \ints \}$. We then have
\begin{align*}
\mathbb{E}[\mathfrak{Y}\ind_{\{\mathfrak{D} > T\}}h^1(X_{\mathfrak{R}})h^2(\check{X}_{\mathfrak{L}})]&=\mathbb{E}[\mathfrak{Y}\ind_{\{\mathfrak{R}+\mathfrak{L} > T\}}h^1(X_{\mathfrak{R}})h^2(\check{X}_{\mathfrak{L}})]\\
&\quad =\sum_{k,l \in \ints } \mathbb{E}[\mathfrak{Y}\ind_{\{\mathfrak{L} > r_l-r_k\}}\\
& \quad \times h^1(X_{r_k})h^2(\check{X}_{\mathfrak{L}})\ind_{\{\mathfrak{R}=r_k,T=r_l\}}]\\
&=\sum_{ l < k} \mathbb{E}[\mathfrak{Y}\ind_{\{\mathfrak{R}=r_k,T=r_l\}}h^1(X_{r_k})h^2(\check{X}_{\mathfrak{L}})] \\
& \quad +\sum_{ k\le l} \mathbb{E}[\mathfrak{Y}\ind_{\{\mathfrak{L} > r_l-r_k\}}\ind_{\{\mathfrak{R}=r_k,T=r_l\}}\\
& \quad \times h^1(X_{r_k})h^2(\check{X}_{\mathfrak{L}})]\\
&=\sum_{ l < k} \mathbb{E}[\mathfrak{Y}]\mathbb{E}[\ind_{\{\mathfrak{R}=r_k,T=r_l\}} h^1(X_{r_k})h^2(\check{X}_{\mathfrak{L}})] \\
& \quad +\sum_{ k\le l} \mathbb{E}[\mathbb{E}[\mathfrak{Y}\ind_{\{\mathfrak{L} > r_l-r_k\}}h^2(\check{X}_{\mathfrak{L}})\, \vert \, \mathcal{F}_{r_l}]\\
& \quad \times \ind_{\{\mathfrak{R}=r_k,T=r_l\}}h^1(X_{r_k})]
\end{align*}

By applying Lemma ~\ref{L:countable_stopping} for $\mathfrak{X}=\mathfrak{Y}\ind_{\{\mathfrak{L} > r_l-r_k\}}h^2(\check{X}_{\mathfrak{L}})$, we have, for $k \le l$, that
\begin{align*}
& \mathbb{E}[\mathfrak{Y}\ind_{\{\mathfrak{L} > r_l-r_k\}}h^2(\check{X}_{\mathfrak{L}}) \vert \mathcal{F}_{r_l}]\ind_{\{\mathfrak{R}=r_k,T=r_l\}}\\
& \quad =  \mathbb{E}[\mathfrak{Y}\ind_{\{\mathfrak{L} > r_l-r_k\}}h^2(\check{X}_{\mathfrak{L}}) \vert \mathcal{\check{F}}_{r_l-r_k}]\ind_{\{\mathfrak{R}=r_k,T=r_l\}}.
\end{align*}
Moreover, if we let $\check{A}$ to be an event in $\mathcal{\check{F}}_{r_l-r_k}$, then 
\[
\mathbb{E}[\mathfrak{Y} \ind_{\{\mathfrak{L} > r_l-r_k\}\cap \check{A}}h^2(\check{X}_{\mathfrak{L}})]=\mathbb{E}[ \mathfrak{Y}]\mathbb{E}[\ind_{ \{\mathfrak{L} > r_l-r_k \} \cap \check{A}}h^2(\check{X}_{\mathfrak{L}}) ],
\]
because the process $(\check{\xi})_{t \ge 0} =(\ind_{\{ t > r_l-r_k\} \cap \check{A}} h^2(\check{X}_{t}))_{t  \ge 0}$ is clearly an $(\mathcal{\check{F}}_t)_{t \ge 0}$-optional process (as it is the product of the left-continuous, right-limited $(\mathcal{\check{F}}_t)_{t \ge 0}$-adapted process  $(\ind_{ \{t > r_l-r_k\} \cap \check{A}})_{t \ge 0}$ and the c\`adl\`ag $(\mathcal{\check{F}}_t)_{t \ge 0}$-adapted process  $(h^2(\check{X}_t)_{t \ge 0})$).  Hence
\[
\mathbb{E}[\mathfrak{Y} \ind_{\{\mathfrak{L} > r_l-r_k\}} \vert \mathcal{\check{F}}_{r_l-r_k}]=\mathbb{E}[ \mathfrak{Y} ] \mathbb{E}[ \ind_{\{\mathfrak{L} \ge r_l-r_k \}}h^2(\check{X}_{\mathfrak{L}})  \vert \mathcal{\check{F}}_{r_l-r_k}].
\]
Substituting in this equality gives 
\begin{align*}
& \mathbb{E}[\mathfrak{Y} \ind_{\{\mathfrak{D} > T\}}h^1(X_{\mathfrak{R}})h^2(\check{X}_{\mathfrak{L}})] \\
&\quad=\sum_{ l < k} \mathbb{E}[\mathfrak{Y}]\mathbb{E}[\ind_{\{\mathfrak{R}=r_k,T=r_l\}} h^1(X_{\mathfrak{R}})h^2(\check{X}_{\mathfrak{L}})]\\
& \qquad +\sum_{ k\le l} \mathbb{E}[\mathfrak{Y}]\mathbb{E}[\ind_{\{\mathfrak{L} > r_l-r_k, \, R=r_k, \, T=r_l\}}h^1(X_{\mathfrak{R}})h^2(\check{X}_{\mathfrak{L}})]\\
& \quad = \mathbb{E}[\mathfrak{Y}]\mathbb{E}[\ind_{\{\mathfrak{L} > T-\mathfrak{R}\}}h^1(X_{\mathfrak{R}})h^2(\check{X}_{\mathfrak{L}})]\\
& \quad = \mathbb{E}[\mathfrak{Y}]\mathbb{E}[\ind_{\{\mathfrak{D} > T\}}h^1(X_{\mathfrak{R}})h^2(\check{X}_{\mathfrak{L}})].
\end{align*}

We have thus proved \eqref{second:ind} when $\mathfrak{R}$ and $T$ both take values in the set $\{ r_k:=\frac{k}{2^{n}}, k \in \ints \}$.
Suppose now that $T$ is an arbitrary $\reals$-valued stopping time but that $\mathfrak{R}$ still takes values in $\{ r_k:=\frac{k}{2^{n}}, k \in \ints \}$.
For $m \in \nats$ set $T_m := \frac{k}{2^m}$ when $\frac{k-1}{2^m} < T \le \frac{k}{2^m}$, $k \in \ints$.
Thus $(T_m)_{m \in \nats}$ is a decreasing sequence of $(\mathcal{F}_t)_{t \in \reals}$-stopping times converging to $T$.
Taking \eqref{second:ind} with $T$ replaced by $T_m$ and letting $m \to \infty$ we get \eqref{second:ind} for $\mathfrak{R}$ taking values in the set $\{ r_k:=\frac{k}{2^{n}}, k \in \ints \}$ and general $\reals$-valued $T$.

%(recall that $\mathfrak{Y}$ and $\mathfrak{L}$ both depend on $\mathfrak{R}$). 
We now to extend to the completely general case of \eqref{second:ind}.
Put $(\check{X}_t^{\mathfrak{R}})_{t \ge 0} :=(X_{t +\mathfrak{R}}-X_{\mathfrak{R}})_{t \ge 0}$. 
Denote the corresponding random variables $\mathfrak{L}$, $\mathfrak{Y}$, and $\mathfrak{D}$  by $\mathfrak{L}^{\mathfrak{R}}$,$\mathfrak{Y}^{\mathfrak{R}}$, and $\mathfrak{D}^{\mathfrak{R}}$, respectively.  Recalling that $\mathfrak{Y}^{\mathfrak{R}}$ is an arbitrary bounded, nonnegative random variable measurable with respect to $\sigma \{ \check{X}_{t+\mathfrak{L}^{\mathfrak{R}}}^{\mathfrak{R}}-\check{X}_{\mathfrak{L}^{\mathfrak{R}}}^{\mathfrak{R}}, t \ge 0\}$, it suffices by a monotone class argument it suffices to show \eqref{second:ind} in the special case where
\[
\mathfrak{Y}^{\mathfrak{R}}=\prod_{i=1}^m f^{i}(\check{X}_{t_i+\mathfrak{L}^{\mathfrak{R}}}^{\mathfrak{R}}-\check{X}_{\mathfrak{L}^{\mathfrak{R}}}^{\mathfrak{R}})=\prod_{i=1}^m f^{i}(X_{t_i+\mathfrak{L}^{\mathfrak{R}}+\mathfrak{R}}-X_{\mathfrak{L}^{\mathfrak{R}}+\mathfrak{R}})
\]
for $f^i$ , $i=1, \dots, m$, bounded, nonnegative, continuous functions and $0 \le t_1 < \dots < t_m$. 

For $n \in \nats$ set $\mathfrak{R}_n := \frac{k}{2^n}$ when $\frac{k-1}{2^n} < \mathfrak{R} \le \frac{k}{2^n}$, $k \in \ints$.
Thus $(\mathfrak{R}_n)_{n \in \nats}$ is a decreasing sequence of $(\mathcal{F}_t)_{t \in \reals}$-stopping times converging to 
$\mathfrak{R}$. 
Note that
\[
\mathfrak{L}^{\mathfrak{R_n}}=\mathrm{argmin} \{ X_{u+\mathfrak{R}}-X_{\mathfrak{R}} : u \ge \mathfrak{R_n}-\mathfrak{R} \}+\mathfrak{R}-\mathfrak{R}_n.
\]
Thus, if $\mathfrak{L}^{\mathfrak{R}}=0$, then $\mathfrak{D}^{\mathfrak{R_n}} \downarrow \mathfrak{D}^{\mathfrak{R}}$ by the right-continuity of the sample paths of $X$. On the other hand,  if $\mathfrak{L}^{\mathfrak{R}}>0$, then, for $n$ large enough, we have that $\mathfrak{D}^{\mathfrak{R_n}}=\mathfrak{D}^\mathfrak{R}$. Hence, by applying the special case of \eqref{second:ind} for the stopping times $\mathfrak{R}_n$  taking discrete values, and using the fact that $X$ has c\`adl\`ag paths we get
\begin{align*}
& \mathbb{E}[\mathfrak{Y}^{\mathfrak{R}} \ind_{\{\mathfrak{D}^{\mathfrak{R}} > T\}}h^1(X_{\mathfrak{R}})h^2(\check{X}_{\mathfrak{L}^{\mathfrak{R}}}^{\mathfrak{R}})]\\
& \quad =  \mathbb{E}\left[\prod_{i=1}^m f^{i}(X_{t_i+\mathfrak{D}^{\mathfrak{R}}}-X_{\mathfrak{D}^{\mathfrak{R}}})
 \ind_{\{\mathfrak{D}^{\mathfrak{R}} > T\}} h^1(X_{\mathfrak{R}})h^2(X_{\mathfrak{D}^{\mathfrak{R}}}-X_{\mathfrak{R}}) \right]\\
& \quad = \lim_{n \rightarrow \infty} \mathbb{E}\left[\prod_{i=1}^m f^{i}(X_{t_i+\mathfrak{D}^{\mathfrak{R_n}}}-X_{\mathfrak{D}^{\mathfrak{R_n}}})
 \ind_{\{\mathfrak{D}^{\mathfrak{R_n}} > T\}}
 h^1(X_{\mathfrak{R_n}})h^2(X_{\mathfrak{D}^{\mathfrak{R_n}}}-X_{\mathfrak{R_n}})\right]\\
& \quad = \lim_{n \rightarrow \infty} \mathbb{E}\left[\prod_{i=1}^m f^{i}(X_{t_i+\mathfrak{D}^{\mathfrak{R_n}}}-X_{\mathfrak{D}^{\mathfrak{R_n}}})
\right] \mathbb{E}[\ind_{\{\mathfrak{D}^{\mathfrak{R_n}} > T\}}
 h^1(X_{\mathfrak{R_n}})h^2(X_{\mathfrak{D}^{\mathfrak{R_n}}}-X_{\mathfrak{R_n}})]\\
& \quad =  \mathbb{E}\left[\prod_{i=1}^m f^{i}(X_{t_i+\mathfrak{D}^{\mathfrak{R}}}-X_{\mathfrak{D}^{\mathfrak{R}}})
\right] \mathbb{E}[\ind_{\{\mathfrak{D}^{\mathfrak{R}} > T\}}
 h^1(X_{\mathfrak{R}})h^2(X_{\mathfrak{D}^{\mathfrak{R}}}-X_{\mathfrak{R}})]\\
& \quad =  \mathbb{E}[\mathfrak{Y}^{\mathfrak{R}}
]\mathbb{E}[\ind_{\{\mathfrak{D}^{\mathfrak{R}} > T\}}  h^1(X_{\mathfrak{R}})h^2(X_{\mathfrak{D}^{\mathfrak{R}}}-X_{\mathfrak{R}})],
\end{align*}
which finishes our proof.
\end{proof}

\def\cprime{$'$}
\providecommand{\bysame}{\leavevmode\hbox to3em{\hrulefill}\thinspace}
\providecommand{\MR}{\relax\ifhmode\unskip\space\fi MR }
% \MRhref is called by the amsart/book/proc definition of \MR.
\providecommand{\MRhref}[2]{%
  \href{http://www.ams.org/mathscinet-getitem?mr=#1}{#2}
}
\providecommand{\href}[2]{#2}

\end{document}